\newtheorem{theorem}{Theorem}[section]
\newtheorem{corollary}[theorem]{Corollary}
\newtheorem{lemma}[theorem]{Lemma}
\theoremstyle{definition}
\newtheorem{remark}[theorem]{Remark}
\numberwithin{equation}{section}
\newtheorem*{lemma*}{Lemma}
\newtheorem*{theorem*}{Theorem}
\DeclareMathAlphabet{\mathpzc}{OT1}{pzc}{m}{it}
\newenvironment{mysage}{\sagesilent}{\endsagesilent}
\begin{document}

\begin{sagesilent}
################
### Rounding ####
###############

def roundup(x,d):

    return float(ceil(x*10^d)/10^d)
def rounddown(x,d):
    return float(floor(x*10^d)/10^d)

##################################
### Managing Number Representation ###
##################################

def Interval(x): ### Converts a real x into a neighborhood of x, expressed by a closed interval. Allows us to handle interval arithmetic. ###
    X=RBF(RIF(x)) 
    I=RealSet([X.lower(),X.upper()])[0]
    return I

def Upper(x,d): ### Gives a ''close'' upper bound with d decimal digits for a real x. ###
    I=Interval(x)
    u=float(ceil(I.upper()*10^d)/10^d)
    return u

def Lower(x,d): ### Gives a ''close'' lower bound with d decimal digits for a real x. ###
    I=Interval(x)
    l=float(floor(I.lower()*10^d)/10^d)
    return l

def Trunc(x,d): ### Given a real x, it gives an expression of x with at most d EXACT decimals; handy for managing displaying of numbers without losing accuracy (upon inserting ''...''). It is a truncation that manages to take care of short decimals as well as numbers of the form 2.9999999999999 (that are taken to be 3). It handles other numbers normally. ###

    if d>10:
        return false
    I=Interval(x)
    s=0
    while (I.lower()*10^s).trunc()==(I.upper()*10^s).trunc():
        s=s+1
    if (x-RIF(x))!=0: ### Ex. x=29.99999999999999999 => RIF(x)=30. This operation returns 29.99999 (d=5), rather than 30 ###
        return Lower(x,d)
    if d>s and (x-RIF(x))==0: ### Ex. x=1.24 and d=5. This operation returns 1.24 rather than 1.23999 given by above orders. ###
        return float(x)
    return float(floor(I.lower()*10^d)/10^d) 

def Numb(x,y,d):
    z=RIF((x+y)/2)
    return Trunc(z,d)

gamma = 0.5772156649015328606065120900824024310421

def A(a,d):
    if a==1:
        if 0<d<=1:
            return RIF(max(gamma,1/d/exp(gamma*d+1)))
        return False
    if a!=1:
        if d+1==a:
            return RIF(max(1/2,1/abs(a-1), zeta(a)-1/(a-1)))
        if 0<d<a<d+1:
            l1=RIF((d-a+1)/abs(zeta(a))/abs(a-1))
            return RIF(max(1/2, (l1^(d-a+1)/d^d)^(1/(a-1)), zeta(a)-1/(a-1)))             
        if d==a:
            return RIF(1/2)
        return False

def crux(v):
    if v
        return CONSTANT2
    return CONSTANT_RAM
	
#--------------------

def fp_prime(p): #Definition solely on the primes -- TO BE CHANGED EACH TIME
    f=RIF((p-2)/(p^(3/2)-p-sqrt(p)+2))
    return RIF(1+f)

def pp(r): #q square free
    f=1
    p=2
    P=1
    if moebius(r)^2==1:
        while p<=sqrt(r):
            if is_prime(p)==True:
                if r
                    f=f*fp_prime(p) 
                    P=P*p
            p=p+1
        if P!=r:
            j=r/P
            f=f*fp_prime(j)
        return RIF(f)
    return 0 

def ramp_prime(p): #Definition solely on the primes -- TO BE CHANGED EACH TIME
    a=p^(3/2)+p-sqrt(p)-1
    b=p^(3/2)-sqrt(p)+1
    f=RIF(a/b)
    if p==2:
        return RIF(21/25)
    return RIF(f)

def ramp(r): #q square free
    f=1
    p=2
    P=1
    if moebius(r)^2==1:
        while p<=sqrt(r):
            if is_prime(p)==True:
                if r
                    f=f*ramp_prime(p) 
                    P=P*p
            p=p+1
        if P!=r:
            j=r/P
            f=f*ramp_prime(j)
        return RIF(f)
    return 0  

#--------------------

def E(a,v):
    if a!=1:
        a1= RIF(abs(a-1)/(a-1/2))
        A = RIF(Upper(Ram_new_cst,2)*(1+a1))
        if v==2:
            A = RIF(Upper((1-1/sqrt(2))*max(C2_v2_1,C2_v2_2),digits)*(1+a1))
        B = RIF(abs(zeta(a)/zeta(2*a)-6/pi^2/(a-1)))
        if v==2:
            B = RIF((1-1/sqrt(2))*abs(v^a/(v^a+1)*zeta(a)/zeta(2*a)-v/(v+1)*6/pi^2/(a-1)))
        c1=RIF(abs(zeta(a)*(a-1)))
        C=RIF(a1*( 3*zeta(2*a)/(a-1/2)/pi^2/c1 )^(2/(a-1))) 
        if v==2:
            C=RIF((1-1/sqrt(2))*a1*(3*(v^a+1)*zeta(2*a)/v^(a-1)/(v+1)/(a-1/2)/pi^2/c1)^(2/(a-1))) 
        return max(A,B,C)
    return -1

\end{sagesilent}

\begin{mysage} 

#For checks about the following constants, see at the end of the file.

digits = 3 #Precision seems to be affected by more than 7 digits

dlong = 5 # (Trunc(,) is not defined for higher number of digits)

delta = 1/3
delta2 = 45/32
deltaram = 1/3

choice = 3/4

theta = RIF(1-1/12/log(10))

exponent=6
exponent2=8
Threshold=10^exponent
Threshold2=10^(exponent/2)
Threshold3=5*10^(exponent2)

Thresholdv2=10^exponent2

expbip= 7 
Thresholdbip = 10^(expbip)
Thresholdbip2 = 10^(expbip/2)

##############################################################################################

# ----> lemma-label{tildetildem}

I_sum1_l=0.755366607315099
I_sum1_u=0.755366626776258	### Precision: 10^9, Time: 8555.87245297s, using SAGE ###
C_sum1=Numb(I_sum1_l+gamma, I_sum1_u+gamma,8)

k2_1half = RIF ( 3/(3-3^(1-1/2)) )
k2_theta = RIF ( 3/(3-3^(1-theta)) )

Delta_1half_l=Lower(zeta(3/2)*1.36843276585094,dlong)	### Precision: 4*10^9, using C++ ###
Delta_1half_u=Upper(zeta(3/2)*1.36843284087041,dlong)	
Delta_1half=Trunc(Delta_1half_u,digits)

Delta_theta_l=Lower(zeta(1+theta)*1.00724550163589,dlong)
Delta_theta_u=Upper(zeta(1+theta)*1.00724557626645,dlong)		
Delta_theta=Trunc(Delta_theta_u,digits)

##############################################################################################

# ----> lemma-label{sum1}

I_Error_sum1_l=7.35984783249704
I_Error_sum1_u=7.35984795957735	### Precision 10^9, Time: 53510.6989071s, using SAGE ###
Error_sum1=Upper(I_Error_sum1_u,digits)

#DELTA PRODUCTS

Error_sum1_1= RIF(A(1,1/30) * zeta(2 - 2/30) * zeta(2-1/30) * 1.20606653924993)			#d=1/30, Pfd=[1.20606644302573,1.20606653924993]
Error_sum1_2 = RIF(A(1,1/9) * zeta(2 - 2/9) * zeta(2-1/9) * 1.1861090282627)				#d=1/9, Pfd=[1.18610893203787,1.1861090282627]
Error_sum1_3 = RIF(A(1,1/7) * zeta(2 - 2/7) * zeta(2-1/7) * 1.17508136980136)			#d=1/7, Pfd=[1.1750812735761,1.17508136980136]
Error_sum1_4 = RIF(A(1,1/5) * zeta(2 - 2/5) * zeta(2-1/5) * 1.15005486834993)			#d=1/5, Pfd=1.15005477212383,1.15005486834993]
Error_sum1_5 = RIF(A(1,1/3) * zeta(2 - 2/3) * zeta(2-1/3) * 1.0571339931299)			#d=1/3, Pfd=[1.05713389690357,1.0571339931299]
Error_sum1_6 = RIF(A(1,2/5) * zeta(2 - 2*2/5) * zeta(2-2/5) * 0.985530751288352)			#d=2/5/14, Pfd=[0.985530703014949,0.985530751288352]
Error_sum1_7 = RIF(A(1,4/9) * zeta(2 - 2*4/9) * zeta(2-4/9) * 0.925136718595616)			#d=4/9, Pfd=[0.925136670211812,0.925136718595616]
Error_sum1_8 = RIF(A(1,8/17) * zeta(2 - 2*8/17) * zeta(2-8/17) * 0.883955928323831)		#d=8/17, Pfd=[0.883955879867484,0.883955928323831]

##############################################################################################

# ----> lemma-label{sumvar1log}

Prod_sumvar1log_l = 0.908260084203484  ### Precision: 3*10^9, Time: <360s, using C++ ---> int_double
Prod_sumvar1log_u = 0.908260346651785

deltaProd_sumvar1log_l=8.06663259802707
deltaProd_sumvar1log_u=8.08903653041877

Sum_sumvar1log_l = 0.550970060568342+gamma
Sum_sumvar1log_u = 0.55099105871815+gamma

A2_sumvar1log = RIF(1+(2-2^delta-2)/((2-1)*2^(1-delta)+2^delta+1))

p2_sumvar1log = RIF(1-A2_sumvar1log/(2-1+A2_sumvar1log))
q2_sumvar1log = RIF(log(2)*A2_sumvar1log/(A2_sumvar1log+2-1))
v2_sumvar1log = RIF(1+(2*(2-1)-A2_sumvar1log*(2+2^delta))/((2-1)*2^(1-delta)+A2_sumvar1log*(2+2^delta)-2+1))

constant_g10 = RIF( Prod_sumvar1log_u/2 + 1/log(Threshold2) * (Sum_sumvar1log_u ) + A(1,delta)*deltaProd_sumvar1log_u/delta/log(Threshold2)^2 )
constant_g20 = RIF( p2_sumvar1log * Prod_sumvar1log_u/2 + 1/log(Threshold2) * (q2_sumvar1log + Sum_sumvar1log_u) + v2_sumvar1log * A(1,delta)*deltaProd_sumvar1log_u/delta/log(Threshold2)^2 )

constant_g10BIP = RIF( Prod_sumvar1log_u/2 + 1/log(Thresholdbip2) * (Sum_sumvar1log_u ) + A(1,delta)*deltaProd_sumvar1log_u/delta/log(Thresholdbip2)^2 )
constant_g20BIP = RIF( p2_sumvar1log * Prod_sumvar1log_u/2 + 1/log(Thresholdbip2) * (q2_sumvar1log + Sum_sumvar1log_u) + v2_sumvar1log * A(1,delta)*deltaProd_sumvar1log_u/delta/log(Thresholdbip2)^2 )

##############################################################################################

# ----> lemma-label{sum2}

I_sum2_l=1.13992197915589		### Precision: 10^9, Time: 8555.87245297s, using SAGE ###
I_sum2_u=1.13992201807822		
C_sum2=Numb(I_sum2_l+gamma, I_sum2_u+gamma,8)
C_sum2_22=Numb(I_sum2_l+gamma+log(2)/3, I_sum2_u+gamma+log(2)/3,8)

Error_sum2=RIF( A(1,delta)*zeta(2-2*delta)/zeta(4-4*delta) ) 

#DELTA PRODUCTS

Error_sum1_1= RIF(A(1,1/30) * zeta(2 - 2/30) / zeta(4-4/30) )			#d=1/30
Error_sum1_2 = RIF(A(1,1/9) * zeta(2 - 2/9) / zeta(4-4/9) )			#d=1/9
Error_sum1_3 = RIF(A(1,1/7) * zeta(2 - 2/7) / zeta(4-4/7) )			#d=1/7
Error_sum1_4 = RIF(A(1,1/5) * zeta(2 - 2/5) / zeta(4-4/5) )			#d=1/5
Error_sum1_5 = RIF(A(1,1/3) * zeta(2 - 2/3) / zeta(4-4/3) )			#d=1/3
Error_sum1_6 = RIF(A(1,2/5) * zeta(2 - 2*2/5) / zeta(4-2*4/5) )			#d=2/5
Error_sum1_7 = RIF(A(1,4/9) * zeta(2 - 2*4/9) / zeta(4-4*4/9) )			#d=4/9
Error_sum1_8 = RIF(A(1,8/17) * zeta(2 - 2*8/17) / zeta(4-8*4/17) )		#d=8/17

##############################################################################################

# ----> lemma-label{sum2log}

Prod_sum2log_l = 0.766633895034946  ### Precision: 3*10^9, Time: <360s, using C++ ---> int_double
Prod_sum2log_u = 0.766634119738678

deltaProd_sum2log_l = 5.41350358675289
deltaProd_sum2log_u = 5.42101612591179

Sum_sum2log_l = 0.749513593011507+gamma
Sum_sum2log_u = 0.74953459116131+gamma

B2_sum2log = RIF(1+(2^(1-delta)-1)/(2^(2-2*delta)+1))

j2_sum2log = RIF(1-B2_sum2log/(2+B2_sum2log))
k2_sum2log = RIF(log(2)*B2_sum2log/(B2_sum2log+2))
l2_sum2log = RIF(1+((2-B2_sum2log)*2^(1-delta)-B2_sum2log)/(2^(2-2*delta)+2^(1-delta)*(B2_sum2log-1)+B2_sum2log))

constant_om1 = RIF(Prod_sum2log_u / 2 + 1/log(Threshold2) * ( Sum_sum2log_u ) + A(1,delta) * (deltaProd_sum2log_u) / delta/ log(Threshold2)^2)
constant_om2 = RIF(j2_sum2log * Prod_sum2log_u / 2 + 1/log(Threshold2) * ( k2_sum2log + Sum_sum2log_u ) + l2_sum2log * A(1,delta) * (deltaProd_sum2log_u) / delta/ log(Threshold2)^2)

constant_om1BIP = RIF(Prod_sum2log_u / 2 + 1/log(Thresholdbip2) * ( Sum_sum2log_u ) + A(1,delta) * (deltaProd_sum2log_u) / delta/ log(Thresholdbip2)^2)
constant_om2BIP = RIF(j2_sum2log * Prod_sum2log_u / 2 + 1/log(Thresholdbip2) * ( k2_sum2log + Sum_sum2log_u ) + l2_sum2log * A(1,delta) * (deltaProd_sum2log_u) / delta/ log(Thresholdbip2)^2)

##############################################################################################

# ----> lemma-label{sumvarp}  

Prod_sumvarp_l = Trunc(1.94359643387259,dlong)  ### Precision: 3*10^9, Time: <360s, using C++ ---> int_double
Prod_sumvarp_u = Trunc(1.94359649909918,dlong)

deltaProd_sumvarp_l=Trunc(35.1592481967473,dlong)  ### delta2 = 11/8
deltaProd_sumvarp_u=Trunc(35.3602424406516,dlong)

deltaProdS1_sumHalf_l=Trunc(8.18840472503388,dlong)  ### delta2 = 1
deltaProdS1_sumHalf_u=Trunc(8.18840524101289,dlong)

deltaProdS2_sumHalf_l=Trunc(17.3092458701807,dlong)  ### delta2 = 5/4
deltaProdS2_sumHalf_u=Trunc(17.3092458701807,dlong)

deltaProdS3_sumHalf_l=Trunc(10.4996429321967,dlong) ### delta2 = 1.101 
deltaProdS3_sumHalf_u=Trunc(10.4996436034292,dlong)

deltaProdS4_sumHalf_l=Trunc(68.8610486654603,dlong)  ### delta2 = 23/16
deltaProdS4_sumHalf_u=Trunc(81.976134467179,dlong)

deltaProdS5_sumHalf_l=Trunc(123.205479993632,dlong)  ### delta2 = 47/32
deltaProdS5_sumHalf_u=Trunc(481.798441901477,dlong)

f2p = RIF(1-2/(2^2-2+1)) 
g2p = RIF(1+(2^2-2^delta2-4*2+2)/(2^(2-delta2)*(2-1)^2+2^delta2+2*2-1))

constant1_m1 = RIF(Prod_sumvarp_u + A(2,delta2)*deltaProd_sumvarp_u/(5*Thresholdv2/10)^((delta2-1)))
constant1_m2 = RIF(Prod_sumvarp_u * f2p + A(2,delta2)*deltaProd_sumvarp_u/(5*Thresholdv2/10)^((delta2-1)) * g2p)

constant2_m1 = 3.65284002906416
constant2_m2 = 1.23963002179812

constant_m1 = max(constant1_m1,constant2_m1)
constant_m2 = max(constant1_m2,constant2_m2)

constant_m1BIP = constant_m1
constant_m2BIP = constant_m2

##############################################################################################

# ----> lemma-label{Ss1}  

deltaProd_sumSs1_l=Trunc(26.3023533198884,dlong)
deltaProd_sumSs1_u=Trunc(26.320623342386,dlong)

f2p = RIF(1-2/(2^2-2+1)) 
h2p = RIF(1+(2^2-2^(1+delta)-4*2+2)/(2^(1-delta)*(2-1)^2+2^(1+delta)+2*2-1))

constant1_w1 = RIF(Prod_sumvarp_u + A(1,delta)*(2-delta)*deltaProd_sumSs1_u/(1-delta)/(Thresholdv2)^(delta))
constant1_w2 = RIF(Prod_sumvarp_u * f2p + A(1,delta)*(2-delta)*deltaProd_sumSs1_u/(1-delta)/(Thresholdv2)^(delta) * h2p)

constant2_w1 = 1.94359647941059
constant2_w2 = 0.647865567529124

constant_w1 = max(constant1_w1,constant2_w1)
constant_w2 = max(constant1_w2,constant2_w2)

constant_w1BIP = constant_w1
constant_w2BIP = constant_w2

#DELTA PRODUCTS

Error_sumSs1_1= RIF(A(1,1/30) * (2-1/30) / (1-1/30) * 8.83934258158701 / Threshold2^(1/30))			#d=1/30, Hd=[8.83934205871588,8.83934258158701]
Error_sumSs1_2 = RIF(A(1,1/9) * (2-1/9) / (1-1/9) * 10.7976387053896 / Threshold2^(1/9))				#d=1/9, Hd=[10.7976379411273,10.7976387053896]
Error_sumSs1_3 = RIF(A(1,1/7) * (2-1/7) / (1-1/7) * 11.8428635283205 / Threshold2^(1/7))			#d=1/7, Hd=[11.8428619436714,11.8428635283205]
Error_sumSs1_4 = RIF(A(1,1/5) * (2-1/5) / (1-1/5) * 14.2781215093248 / Threshold2^(1/5))			#d=1/5, Hd=[14.2781040503951,14.2781215093248]
Error_sumSs1_5 = RIF(A(1,1/3) * (2-1/3) / (1-1/3) *  26.3206233423862 / Threshold2^(1/3))			#d=1/3, Hd=[26.3023533198884,26.3206233423862]
Error_sumSs1_6 = RIF(A(1,2/5) * (2-2/5) / (1-2/5) * 44.8184727295661 / Threshold2^(2/5))			#d=2/5, Hd=[43.8778284108464,44.8184727295661]
Error_sumSs1_7 = RIF(A(1,4/9) * (2-4/9) / (1-4/9) * 100.006373211557 / Threshold2^(4/9))			#d=4/9, Hd=[76.6825997339701,100.006373211557]
Error_sumSs1_8 = RIF(A(1,8/17) * (2-8/17) / (1-8/17) * 618.740666005107 / Threshold2^(8/17))		#d=8/17, Hd=[128.735118453479,618.740666005107]

##############################################################################################

# ----> lemma-label{sum.1/2}  

delta_sumHalf=1/3

F2=RIF(1-(sqrt(2)+1)/(2^(3/2)-2+2))
G2=RIF(log(2)/(2-2*sqrt(2)+2))
D2=RIF(1+(2-4*sqrt(2)-2^delta_sumHalf+2)/((sqrt(2)-1)^2*2^(1-delta_sumHalf)+2*sqrt(2)+2^delta_sumHalf-1))

Prod_sumHalf_l = 15.0333977306198  ### Precision: 3*10^9, Time: <360s, using C++ ---> int_double
Prod_sumHalf_u = 15.0337644348976

deltaProd1_sumHalf_l = 10.7125980293757
deltaProd1_sumHalf_u = 10.7200446392398

deltaProd2_sumHalf_l= 150.759502870216
deltaProd2_sumHalf_u= 165.591131996911 

Sum_sumHalf_l = -1.73634511834745+gamma
Sum_sumHalf_u = -1.73581610256903+gamma

#DELTA PRODUCTS

Error_sumHalf_1= RIF(A(1,1/3) * 1792.86936752771 / Threshold^(1/3))			#d=1/3, Hd=[1612.51548737184,1792.86936752771],[10.7125980293757,10.7200446392398]-[150.759502870216,165.591131996911]
Error_sumHalf_2 = RIF(A(1,11/32) * 2205.43552891345 / Threshold^(11/32))		#d=11/32, Hd=[1913.11941809751,2205.43552891345],[10.7061052639316,10.7186098449943]-[178.694246967918,205.757608571173]
Error_sumHalf_3 = RIF(A(1,6/17) * 2702.54807903974 / Threshold^(6/17))			#d=6/17, Hd=[2246.17435290147,2702.54807903974],[10.6954578407999,10.7152816957671]-[210.011986988813,252.214375297975]
Error_sumHalf_4 = RIF(A(1,3/8) * 4914.35562349626 / Threshold^(3/8))			#d=3/8, Hd=[3448.13446436226,4914.35562349626],[10.6484790351858,10.7093722208825]-[323.814739454205,458.883632218294]
Error_sumHalf_5 = RIF(A(1,8/21) * 5991.07302993625 / Threshold^(8/21))			#d=8/21, Hd=[3918.3769356223,5991.07302993625],[10.6297861027956,10.7126223292616]-[368.622369042004,559.253639846107]
Error_sumHalf_6 = RIF(A(1,9/23) * 8918.17703847463 / Threshold^(9/23))			#d=9/32, Hd=[4965.43289805425,8918.17703847463],[10.5897960141305,10.7321784325023]-[468.888436701578,830.975471994204]
Error_sumHalf_7 = RIF(A(1,2/5) * 13336.5474051997 / Threshold^(2/5))			#d=2/5, Hd=[6155.39804245799,13336.5474051997],[10.5473705964455,10.7735284010179]-[583.595502421462,1237.899684187]
Error_sumHalf_8 = RIF(A(1,4/9) * 1169421.21876513 / Threshold^(4/9))			#d=4/9, Hd=[25244.9803916096,1169421.21876513],[10.0427264910237,13.0976618077657]-[2513.75763485879,89284.7315748967]

# ----> lemma-label{sum.1/2threshold}  

constant1_threshold_1half = 14.1135189389139 
constant2_threshold_1half = 2.08929191526059 

constant_x1v = RIF(Prod_sumHalf_u + Sum_sumHalf_u/log(Threshold3) + A(1,delta_sumHalf)*deltaProd1_sumHalf_u*deltaProd2_sumHalf_u/Threshold3^(delta_sumHalf)/log(Threshold3))

constant_x2 = RIF(Prod_sumHalf_u *F2+ (Sum_sumHalf_u+G2)/log(Threshold3) + D2*A(1,delta_sumHalf)*deltaProd1_sumHalf_u*deltaProd2_sumHalf_u/Threshold3^(delta_sumHalf)/log(Threshold3))

constant_x1 = RIF( constant_x2 * (1+1/(sqrt(2)-1)^2))

##############################################################################################

# ----> lemma-label{Ss2}  

delta_Ss2=1/3

F_Ss2 = RIF(1-(2-1)^2/(2^2*(sqrt(2)-1)^2+2*2^(3/2)-3*2+1))
H_Ss2 = RIF(1+(2^2-4*2^(3/2)-(2-1)*2^delta_Ss2+3*2)/(2^(2-delta_Ss2)*(sqrt(2)-1)^2+2*2^(3/2)+(2-1)*2^delta_Ss2-2*2))

Prod_Ss2_l = 5.69431329873424  ### Precision: 3*10^9, Time: <360s, using C++ ---> int_double
Prod_Ss2_u = 5.69441191918336

deltaProd1_Ss2_l = 1.70954296717661
deltaProd1_Ss2_u = 1.71048525255215

deltaProd2_Ss2_l = deltaProd2_sumHalf_l
deltaProd2_Ss2_u = deltaProd2_sumHalf_u 

#DELTA PRODUCTS

Error_sumHalf_1= RIF(A(1,1/3) * 1792.86936752771 / Threshold^(1/3))			#d=1/3, Hd=[1612.51548737184,1792.86936752771],[10.7125980293757,10.7200446392398]-[150.759502870216,165.591131996911]

Error_sumHalf_1BIP = RIF(A(1,1/3) * 1792.86936752771 / Thresholdbip^(1/3))	

##############################################################################################

# ----> lemma-label{sum2.1/2}  

delta_sum2Half=1/3

X22 = RIF(1-1/(2-2*sqrt(2)+2))
Y22 = RIF(1+(2-4*sqrt(2)-2^(delta_sum2Half)+2)/(2^(1-delta_sum2Half)*(sqrt(2)-1)^2+2*sqrt(2)+2^delta_sum2Half-1))

Prod_sum2Half_l = 15.0334119036497  ### Precision: 3*10^9, Time: <360s, using C++ ---> int_double
Prod_sum2Half_u = 15.0336721156811

deltaProd_sum2Half_l = 1615.05735275972
deltaProd_sum2Half_u = 1775.90360702186

constant1_threshold2_1half = 16.7682417501771
constant2_threshold2_1half = 2.50640699370728

constant2_x1v = RIF(Prod_sum2Half_u + deltaProd_sum2Half_u * A(1,delta_sum2Half) * (2-delta_sum2Half)/(1-delta_sum2Half) / Threshold3^delta_sum2Half)
constant2_x2 = RIF(X22 * Prod_sum2Half_u + Y22 * deltaProd_sum2Half_u * A(1,delta_sum2Half) * (2-delta_sum2Half)/(1-delta_sum2Half) / Threshold3^delta_sum2Half)
constant2_x1 = RIF( constant2_x2 * (1+1/(sqrt(2)-1)^2))

##############################################################################################

# ----> lemma-label{Ss3}  

delta_Ss3 = 1/3

F_Ss3 = RIF(1-(2^(3/2)+2-sqrt(2)-1)/(2^(5/2-theta)*(2^theta-1)+2+2^(3/2-theta)-sqrt(2)-1))
H_Ss3 = RIF(1+(2^(3/2-delta_Ss3)-2*2^(3/2-theta-delta_Ss3)-2^(1-delta_Ss3)-sqrt(2)-1)/(2^(3/2-2*delta_Ss3)*(2-2^(1-theta))+2^(3/2-theta-delta_Ss3)+2^(1-delta_Ss3)+sqrt(2)+1)) 

Prod_Ss3_l = 2.25359602570734
Prod_Ss3_u = 2.25361568426274

deltaProd_Ss3_l = 55.1999077928566
deltaProd_Ss3_u = 57.9152802733073

Error_Ss3 = A(1,delta_Ss3)*deltaProd_Ss3_u*(1-delta_Ss3)/(1/2-delta_Ss3) / Threshold^delta_Ss3
Error_Ss3BIP = A(1,delta_Ss3)*deltaProd_Ss3_u*(1-delta_Ss3)/(1/2-delta_Ss3) / Thresholdbip^delta_Ss3

##############################################################################################

# ----> lemma-label{Parity} 

Prod_Parity_l = 0.660161800282638  ### Precision: 3*10^9, Time: <360s, using C++ ---> int_double
Prod_Parity_u = 0.660161816820513

deltaProd_Parity_l = 1.07964726493134
deltaProd_Parity_u = 1.08039615954305

constant_qeven = RIF(4 * (1+1/2^(1-delta))*A(1,delta)*0.660161816820513*1.08039615954305)
constant_qodd = RIF(2^(1+delta) * (1+1/2^(1-delta))*A(1,delta)*0.660161816820513*1.08039615954305)

##############################################################################################

# ----> lemma-label{Ss1Log} _______ N  E  W  

W1 = 10^(12)

f2_2_Ss1Log = RIF( 1 - 1/(2^(2-2*theta)*(2^theta-1)^2+1) )
f2_3_Ss1Log = RIF( 1 - 1/(2^(3/2-2*theta)*(2^theta-1)^2+1) )

Prod2_Ss1Log_l = 2.93392267813336  ### Precision: 3*10^9, Time: <360s, using C++ ---> int_double
Prod2_Ss1Log_u = 2.93392292726709

Prod3_Ss1Log_l = 5.30171336966289 ### Precision: 3*10^9, Time: <360s, using C++ ---> int_double
Prod3_Ss1Log_u = 5.30175957628429

constant_y1 = RIF( 3 * Prod2_Ss1Log_u + ( 4/e/log(W1) + 16/e^2/log(W1)^2 ) * 5.30175957628429 )

constant_y2 = RIF( 3 * f2_2_Ss1Log * Prod2_Ss1Log_u + ( 4/e/log(W1) + 16/e^2/log(W1)^2 ) * f2_3_Ss1Log * 5.30175957628429 )

##############################################################################################

# ----> lemma-label{Ss2Log} _______ N  E  W

f2_0_Ss2Log=RIF(1-(2-1)/(2^(2-2*theta)*(2^theta-1)^2+2*2^(1-theta)-2^(1-2*theta)-1))
s2_Ss2Log=RIF(log(2)/(2^(1-2*theta)*(2^theta-1)^2+1))
f2_d_Ss2Log=RIF(1+(2^(2*theta)-4*2^theta-2^(2*theta+delta-1)+2)/(2^(1-delta)*(2^theta-1)^2+2*2^theta+2^(2*theta+delta-1)-1))

Prod1_Ss2Log_u = 2.0881396785312
Prod1_Ss2Log_l = 2.08813954913666  ### Precision: 3*10^9, Time: <360s, using C++ ---> int_double

Sum_Ss2Log_l = 0.282661139651601+gamma  ### Precision: 3*10^9, Time: <360s, using C++ ---> int_double
Sum_Ss2Log_u = 0.282661147670366+gamma

deltaProd_Ss2Log_l = 30.0384493826561
deltaProd_Ss2Log_u = 30.0384508591214

constant_z1 = RIF( 1*(2*A(1,delta)*deltaProd_Ss2Log_u/log(10^(12)) / (1-choice)^2 / 10^(12*delta*choice) + A(1,delta)*deltaProd_Ss2Log_u/log(10^(12)) / (1-(2/delta)*choice / (1-choice) / log(10^(12)) )^2) + 1*( choice / (1-choice) + (Sum_Ss2Log_u + 0) / log(10^(12)) ) * Prod1_Ss2Log_u )

constant_z2 = RIF( f2_d_Ss2Log * (2*A(1,delta)*deltaProd_Ss2Log_u/log(10^(12)) / (1-choice)^2 / 10^(12*delta*choice) + A(1,delta)*deltaProd_Ss2Log_u/log(10^(12)) / (1-(2/delta)*choice / (1-choice) / log(10^(12)) )^2) + f2_0_Ss2Log * ( choice / (1-choice) + (Sum_Ss2Log_u + s2_Ss2Log) / log(10^(12)) ) * Prod1_Ss2Log_u )

##############################################################################################

# ----> lemma-label{Ss3Log} 

k3=3/2 

deltaSs3Log = delta

f2_0_Ss3Log = RIF(1-(2-1)^2/((2^theta-1)^2*2^(3-2*theta)-2^(2-2*theta)+2*2^(2-theta)-2*2+1))
s2_Ss3Log = RIF(log(2)*(2-1)/(2^(2-2*theta)*(2^theta-1)^2+2-1))
f2_d_Ss3Log = RIF(1+(2^2-(2-1)*2^deltaSs3Log-4*2^(2-theta)+2+2*2^(2-2*theta))/(2^(3-2*theta-deltaSs3Log)*(2^theta-1)^2+2*2^(2-theta)+(2-1)*2^deltaSs3Log-2-2^(2-2*theta)))

Prod_Ss3Log_l = 1.05888883358328
Prod_Ss3Log_u = 1.05888895802532

Sum_Ss3Log_l = 0.698779325829545 + gamma ### Precision: 6*10^9, Time: <360s, using C++ ---> int_double
Sum_Ss3Log_u= 0.698779357125726 + gamma

deltaProd_Ss3Log_l = 9.13490825011212
deltaProd_Ss3Log_u = 9.14498162967894

constant_psi1 = RIF( Prod_Ss3Log_u *  ( 1/(k3-1)/sqrt(12*log(10)) + 1/(log(10^12)^(3/2)) * ( Sum_Ss3Log_u - 2 + 0 + A(1,deltaSs3Log) * deltaProd_Ss3Log_u * (2^(k3+1)+1) ) ) )

constant_psi2 = RIF( f2_0_Ss3Log*Prod_Ss3Log_u *  ( 1/(k3-1)/sqrt(12*log(10)) + 1/(log(10^12)^(3/2)) * ( Sum_Ss3Log_u - 2 + s2_Ss3Log + A(1,deltaSs3Log) * deltaProd_Ss3Log_u * (2^(k3+1)+1)*f2_d_Ss3Log ) ) )

##############################################################################################
### NEW STUFF ################################################################################
##############################################################################################

##############################################################################################

#--------------------

Ram_new_cst = 0.43
C2_v2_1= 3*9/70
C2_v2_2= 0.406722117422497

CONSTANT1 = RIF( max( 6*C_sum2/pi^2, Ram_new_cst ) )
CONSTANT_ALT = RIF( max(4*C_sum2_22/pi^2, max(C2_v2_1,C2_v2_2)) )

cst_crucial = CONSTANT1

CONSTANT2 = RIF( (1-1/sqrt(2)) * max( 4*C_sum2_22/pi^2, C2_v2_1,C2_v2_2 ) )

CONSTANT_RAM = RIF( (1-1/sqrt(2)) * (CONSTANT1+CONSTANT2/(sqrt(2)-1)) )

#--------------------

Prod_Ram_Lower = 9.37522491513744
Prod_Ram_Upper = 9.37530668903219

#--------------------

v0 = 2
v1 = 3
v2 = 5
v4 = 7
v1_2 = v0*v1
v3_2 = v0*v2
v7_2 = v0*v4

##############################################################################################

# ----> lemma-label{S1}

constant_X1 = RIF( constant_x1 * constant2_x1 * (1-choice) + (constant_x1 * constant2_x1 + constant2_x1) / log(Threshold) )
constant_X2 = RIF( constant_x2 * constant2_x2 * (1-choice) + (constant_x2 * constant2_x2 + constant2_x2) / log(Threshold) )

constant_c1=RIF(sqrt(constant_y1*constant_z1/constant_X1)*(Threshold)^((1-choice)/2)/log(Threshold)/389)
constant_c2=RIF(sqrt(constant_y2*constant_z2/constant_X2)*(Threshold)^((1-choice)/2)/log(Threshold)/389 * 2^theta*(sqrt(2)-1)/(2^theta-1)/sqrt(2))

constant1_errS1=RIF((1+constant_c1)*constant_X1*log(Threshold)^2/Threshold^(1-choice) + (1+1/constant_c1)*constant_y1*constant_z1/389^2)
constant2_errS1=RIF((1+constant_c2)*constant_X2*log(Threshold)^2*2/Threshold^(1-choice)/(sqrt(2)-1)^2+ (1+1/constant_c2)*constant_y2*constant_z2*2^(2*theta)/389^2/(2^theta-1)^2)

constant_X1BIP = RIF( constant_x1 * constant2_x1 * (1-choice) + (constant_x1 * constant2_x1 + constant2_x1) / log(Thresholdbip) )
constant_X2BIP = RIF( constant_x2 * constant2_x2 * (1-choice) + (constant_x2 * constant2_x2 + constant2_x2) / log(Thresholdbip) )

constant_c1BIP = RIF(sqrt(constant_y1*constant_z1/constant_X1)*(Thresholdbip)^((1-choice)/2)/log(Thresholdbip)/389)
constant_c2BIP = RIF(sqrt(constant_y2*constant_z2/constant_X2)*(Thresholdbip)^((1-choice)/2)/log(Thresholdbip)/389 * 2^theta*(sqrt(2)-1)/(2^theta-1)/sqrt(2))

constant1_errS1BIP = RIF((1+constant_c1BIP)*constant_X1*log(Thresholdbip)^2/sqrt(Thresholdbip) + (1+1/constant_c1BIP)*constant_y1*constant_z1/389^2)
constant2_errS1BIP = RIF((1+constant_c2BIP)*constant_X2*log(Thresholdbip)^2*2/sqrt(Thresholdbip)/(sqrt(2)-1)^2+ (1+1/constant_c2BIP)*constant_y2*constant_z2*2^(2*theta)/389^2/(2^theta-1)^2)

##############################################################################################

# ----> lemma-label{S2}

constant1_errS2 = RIF(constant_qodd + constant_m1 * constant_w1)

constant2_errS2 = RIF(constant_qeven + 2^2 * constant_m2 * constant_w2)

constant1_errS2BIP = RIF(constant_qodd + constant_m1BIP * constant_w1BIP)

constant2_errS2BIP = RIF(constant_qeven + 2^2 * constant_m2BIP * constant_w2BIP)

##############################################################################################

# ----> lemma-label{S3}

constant1_errS3 =  RIF(2 * Error_sum1 * constant_g10 / 4)
constant2_errS3 = RIF(2 * 2 * Error_sum1 * A2_sumvar1log * constant_g20 / 4) 

constant_o1 = RIF(2 * ( gamma * log(Threshold) / (Threshold)^(1/4) + 1 / 103))
constant_o2 = RIF(2 * ( sqrt(2)/(sqrt(2)-1) * gamma * log(Threshold) / (Threshold)^(1/4) + 2^theta / (2^theta-1) * 1 / 103))

constant1_errS3BIP =  RIF(2 * Error_sum1 * constant_g10BIP / 4)
constant2_errS3BIP = RIF(2 * 2 * Error_sum1 * A2_sumvar1log * constant_g20BIP / 4) 

constant_o1BIP = RIF(2 * ( gamma * log(Thresholdbip) / (Thresholdbip)^(1/4) + 1 / 103))
constant_o2BIP = RIF(2 * ( sqrt(2)/(sqrt(2)-1) * gamma * log(Thresholdbip) / (Thresholdbip)^(1/4) + 2^theta / (2^theta-1) * 1 / 103))

##############################################################################################

# ----> label{estimm}

constant_n1 = RIF(2 * Error_sum2 * constant_om1^2 / 2^4)
constant_n2 = RIF(2 * Error_sum2 * B2_sum2log * constant_om2^2 / 2^4)

constant_n1BIP = RIF(2 * Error_sum2 * constant_om1BIP^2 / 2^4)
constant_n2BIP = RIF(2 * Error_sum2 * B2_sum2log * constant_om2BIP^2 / 2^4)

##############################################################################################

constant_i1 = 3.83717
constant_j1 = 4.89606
constant_k1 = 0.000033536

constant_i2 = 4.99703
constant_j2 = 9.57182
constant_k2 = 0.0000615022

constant_e1 = RIF( 6 / pi^2 * (constant_i1 * log(Threshold)^2 / 2 / sqrt(Threshold) + (constant_i1 + constant_j1) * log(Threshold) / sqrt(Threshold) + 4 * constant_k1 ))
constant_e2 = RIF( 6 / pi^2 * 2 / 3 * (constant_i2 * log(Threshold)^2 / 2 / sqrt(Threshold) + (constant_i2 + constant_j2) * log(Threshold) / sqrt(Threshold) + 4 * constant_k2 ))

constant_e1BIP = RIF( 6 / pi^2 * 2 * constant_k1 )
constant_e2BIP = RIF( 6 / pi^2 * 2 / 3 * 2 * constant_k2 )

##############################################################################################

# ----> label{mainterm}

constant_integral1_l = -0.04951001463
constant_integral1_u = -0.04951000438

constant_integral2_l = 2.63481249177
constant_integral2_u = 2.63481271383

error_integral1 =  RIF ( constant_i1 * log(Threshold)/Threshold + ( constant_i1 + constant_j1 )/Threshold + 2 * constant_k1/log (10^(12)) )
error_integral2 =  RIF ( constant_i2 * log(Threshold)/Threshold + ( constant_i2 + constant_j2 )/Threshold + 2 * constant_k2/log (10^(12)) )

constant_s1_l = RIF(gamma - 6 / pi^2 * constant_integral1_u - error_integral1)
constant_s1_u = RIF(gamma - 6 / pi^2 * constant_integral1_l + error_integral1)

constant_s2_l = RIF(2 * (gamma + log(2) ) - 4 / pi^2 * constant_integral2_u - error_integral2)
constant_s2_u = RIF(2 * (gamma + log(2) ) - 4 / pi^2 * constant_integral2_l + error_integral2)

error_s1 = RIF( (constant_s1_u + constant_s1_l )/2 - 0.607)
error_s2 = RIF( (constant_s2_u + constant_s2_l )/2 - 1.472)

barrier_threshold_1 = 0.610576755017781
barrier_threshold_2 = 1.47308496508565

##############################################################################################

# ----> label{error01}

exbig = 8
barrier = 3
barrier2 = 8
bip= floor(e^exbig)

constant1_v1_Omega10_b3 = RIF( ( Prod_Ss2_u + A(1,delta_Ss2)*(2-delta_Ss2)*deltaProd1_Ss2_u *deltaProd2_Ss2_u/(1-delta_Ss2)/Threshold^delta_Ss2 ) * sqrt(log(barrier))/sqrt(barrier)/10^(exponent/2) )
constant2_v1_Omega10_b3 = RIF( (2*Prod_Ss3_u + A(1,delta_Ss3)*deltaProd_Ss3_u*(1-delta_Ss3)/(1/2-delta_Ss3) / Threshold^delta_Ss3)/ 389 / sqrt(log(barrier)) ) #U_2=U_0, u=3
constant1_v1_Omega10_b8 = RIF( ( Prod_Ss2_u + A(1,delta_Ss2)*(2-delta_Ss2)*deltaProd1_Ss2_u *deltaProd2_Ss2_u/(1-delta_Ss2)/Threshold^delta_Ss2 ) * sqrt(log(barrier2))/sqrt(barrier2)/10^(exponent/2) )
constant2_v1_Omega10_b8 = RIF( (2*Prod_Ss3_u + A(1,delta_Ss3)*deltaProd_Ss3_u*(1-delta_Ss3)/(1/2-delta_Ss3) / Threshold^delta_Ss3)/ 389 / sqrt(log(barrier2)) ) #U_2=U_0
constant1_v1_Omega10_bip = RIF( ( Prod_Ss2_u + A(1,delta_Ss2)*(2-delta_Ss2)*deltaProd1_Ss2_u *deltaProd2_Ss2_u/(1-delta_Ss2)/Thresholdbip^delta_Ss2 ) * sqrt(log(bip))/sqrt(bip)/10^(exponent/2) )
constant2_v1_Omega10_bip = RIF( (2*Prod_Ss3_u + A(1,delta_Ss3)*deltaProd_Ss3_u*(1-delta_Ss3)/(1/2-delta_Ss3) / Thresholdbip^delta_Ss3)/ 389 / sqrt(log(bip)) ) #U_2=U_0
constant3_v1_Omega10 = RIF( (2*Prod_Ss3_u + A(1,delta_Ss3)*deltaProd_Ss3_u*(1-delta_Ss3)/(1/2-delta_Ss3) / Threshold^delta_Ss3 )/ 389/10^6/sqrt(log(10^(12))) + constant_psi1/389^2 )
constant3_v1_Omega10bip = RIF( (2*Prod_Ss3_u + A(1,delta_Ss3)*deltaProd_Ss3_u*(1-delta_Ss3)/(1/2-delta_Ss3) / Thresholdbip^delta_Ss3 )/ 389/10^6/sqrt(log(10^(12))) + constant_psi1/389^2 )

constant1_v2_Omega10_b3 = RIF( 2 / (sqrt(2)-1)^2 * ( F_Ss2 * Prod_Ss2_u+ H_Ss2 * A(1,delta_Ss2)*(2-delta_Ss2)*deltaProd1_Ss2_u *deltaProd2_Ss2_u/(1-delta_Ss2)/Threshold^delta_Ss2 ) * sqrt(log(barrier)) / sqrt(barrier) / 10^(exponent/2) )
constant2_v2_Omega10_b3 = RIF( 2^(1/2+theta) / (sqrt(2)-1)/(2^theta-1) * (F_Ss3 * 2*Prod_Ss3_u + H_Ss3 * A(1,delta_Ss3)*deltaProd_Ss3_u*(1-delta_Ss3)/(1/2-delta_Ss3) / Threshold^delta_Ss3)/ 389 / sqrt(log(barrier)) ) #U_2=U_0, u=8
constant1_v2_Omega10_b8 = RIF( 2 / (sqrt(2)-1)^2 * ( F_Ss2 * Prod_Ss2_u+ H_Ss2 * A(1,delta_Ss2)*(2-delta_Ss2)*deltaProd1_Ss2_u *deltaProd2_Ss2_u/(1-delta_Ss2)/Threshold^delta_Ss2 ) * sqrt(log(barrier2)) / sqrt(barrier2) / 10^(exponent/2) )
constant2_v2_Omega10_b8 = RIF( 2^(1/2+theta) / (sqrt(2)-1)/(2^theta-1) * (F_Ss3 * 2*Prod_Ss3_u + H_Ss3 * A(1,delta_Ss3)*deltaProd_Ss3_u*(1-delta_Ss3)/(1/2-delta_Ss3) / Threshold^delta_Ss3)/ 389 / sqrt(log(barrier2)) ) #U_2=U_0, u=8
constant1_v2_Omega10_bip = RIF( 2 / (sqrt(2)-1)^2 * ( F_Ss2 * Prod_Ss2_u+ H_Ss2 * A(1,delta_Ss2)*(2-delta_Ss2)*deltaProd1_Ss2_u *deltaProd2_Ss2_u/(1-delta_Ss2)/Thresholdbip^delta_Ss2 ) * sqrt(log(bip)) / sqrt(bip) / 10^(exponent/2) )
constant2_v2_Omega10_bip = RIF( 2^(1/2+theta) / (sqrt(2)-1)/(2^theta-1) * (F_Ss3 * 2*Prod_Ss3_u + H_Ss3 * A(1,delta_Ss3)*deltaProd_Ss3_u*(1-delta_Ss3)/(1/2-delta_Ss3) / Thresholdbip^delta_Ss3)/ 389 / sqrt(log(bip)) ) #U_2=U_0, u=bip
constant3_v2_Omega10 = RIF( 2^(1/2+theta) / (sqrt(2)-1)/(2^theta-1) * (F_Ss3 * 2*Prod_Ss3_u + H_Ss3 * A(1,delta_Ss3)*deltaProd_Ss3_u*(1-delta_Ss3)/(1/2-delta_Ss3) / Threshold^delta_Ss3)/ 389/10^6/sqrt(log(10^(12))) + 2^(2*theta) / (2^theta-1)^2 * constant_psi2/389^2 )
constant3_v2_Omega10bip = RIF( 2^(1/2+theta) / (sqrt(2)-1)/(2^theta-1) * (F_Ss3 * 2*Prod_Ss3_u + H_Ss3 * A(1,delta_Ss3)*deltaProd_Ss3_u*(1-delta_Ss3)/(1/2-delta_Ss3) / Thresholdbip^delta_Ss3)/ 389/10^6/sqrt(log(10^(12))) + 2^(2*theta) / (2^theta-1)^2 * constant_psi2/389^2 )

constant1_v1_Omega11_b3 = constant1_v1_Omega10_b3
constant2_v1_Omega11_b3 = RIF( (2*Prod_Ss3_u + A(1,delta_Ss3)*deltaProd_Ss3_u*(1-delta_Ss3)/(1/2-delta_Ss3) / Threshold^delta_Ss3)/ 389 / sqrt(log(barrier)*barrier) ) #U_2=U_1
constant1_v1_Omega11_b8 = constant1_v1_Omega10_b8
constant2_v1_Omega11_b8 = RIF( (2*Prod_Ss3_u + A(1,delta_Ss3)*deltaProd_Ss3_u*(1-delta_Ss3)/(1/2-delta_Ss3) / Threshold^delta_Ss3)/ 389 / sqrt(log(barrier2)*barrier2) ) #U_2=U_1
constant1_v1_Omega11_bip = constant1_v1_Omega10_bip
constant2_v1_Omega11_bip = RIF( (2*Prod_Ss3_u + A(1,delta_Ss3)*deltaProd_Ss3_u*(1-delta_Ss3)/(1/2-delta_Ss3) / Thresholdbip^delta_Ss3)/ 389 / sqrt(log(bip)*bip) ) #U_2=U_1
constant3_v1_Omega11 = constant3_v1_Omega10
constant3_v1_Omega11bip = constant3_v1_Omega10bip

constant1_v2_Omega11_b3 = constant1_v2_Omega10_b3
constant2_v2_Omega11_b3 = RIF( 2^(1/2+theta) / (sqrt(2)-1)/(2^theta-1) * (F_Ss3 * 2*Prod_Ss3_u + H_Ss3 * A(1,delta_Ss3)*deltaProd_Ss3_u*(1-delta_Ss3)/(1/2-delta_Ss3) / Threshold^delta_Ss3)/ 389 / sqrt(log(barrier)*barrier) ) #U_2=U_1, u=3
constant1_v2_Omega11_b8 = constant1_v2_Omega10_b8
constant2_v2_Omega11_b8 = RIF( 2^(1/2+theta) / (sqrt(2)-1)/(2^theta-1) * (F_Ss3 * 2*Prod_Ss3_u + H_Ss3 * A(1,delta_Ss3)*deltaProd_Ss3_u*(1-delta_Ss3)/(1/2-delta_Ss3) / Threshold^delta_Ss3)/ 389 / sqrt(log(barrier2)*barrier2) ) #U_2=U_1, u=8
constant1_v2_Omega11_bip = constant1_v2_Omega10_bip
constant2_v2_Omega11_bip = RIF( 2^(1/2+theta) / (sqrt(2)-1)/(2^theta-1) * (F_Ss3 * 2*Prod_Ss3_u + H_Ss3 * A(1,delta_Ss3)*deltaProd_Ss3_u*(1-delta_Ss3)/(1/2-delta_Ss3) / Threshold^delta_Ss3)/ 389 / sqrt(log(bip)*bip) ) #U_2=U_1, u=bip
constant3_v2_Omega11 = constant3_v2_Omega10
constant3_v2_Omega11bip = constant3_v2_Omega10bip

constant_v1_Omega10_b3 = constant1_v1_Omega10_b3 + constant2_v1_Omega10_b3 + constant3_v1_Omega10 #U_2=U_0, u=3
constant_v2_Omega10_b3 = constant1_v2_Omega10_b3 + constant2_v2_Omega10_b3 + constant3_v2_Omega10

constant_v1_Omega11_b3 = constant1_v1_Omega11_b3 + constant2_v1_Omega11_b3 + constant3_v1_Omega11 #U_2=U_1, u=3
constant_v2_Omega11_b3 = constant1_v2_Omega11_b3 + constant2_v2_Omega11_b3 + constant3_v2_Omega11

constant_v1_Omega10_b8 = constant1_v1_Omega10_b8 + constant2_v1_Omega10_b8 + constant3_v1_Omega10 #U_2=U_0, u=8
constant_v2_Omega10_b8 = constant1_v2_Omega10_b8 + constant2_v2_Omega10_b8 + constant3_v2_Omega10

constant_v1_Omega11_b8 = constant1_v1_Omega11_b8 + constant2_v1_Omega11_b8 + constant3_v1_Omega11 #U_2=U_1, u=8
constant_v2_Omega11_b8 = constant1_v2_Omega11_b8 + constant2_v2_Omega11_b8 + constant3_v2_Omega11

constant_v1_Omega10_bip = constant1_v1_Omega10_bip + constant2_v1_Omega10_bip + constant3_v1_Omega10bip #U_2=U_0, u=8
constant_v2_Omega10_bip = constant1_v2_Omega10_bip + constant2_v2_Omega10_bip + constant3_v2_Omega10bip

constant_v1_Omega11_bip = constant1_v1_Omega11_bip + constant2_v1_Omega11_bip + constant3_v1_Omega11bip #U_2=U_1, u=8
constant_v2_Omega11_bip = constant1_v2_Omega11_bip + constant2_v2_Omega11_bip + constant3_v2_Omega11bip

##############################################################################################

# ----> label{boundBp}

constantp_Ap1_b3 = 1.59600978550163 
constantp_Ap2_b3 = 0.729869895920385

constant_Ap1_b3 = RIF((1+1/log(barrier))*constantp_Ap1_b3)
constant_Ap2_b3 = RIF((1+1/log(barrier))*constantp_Ap2_b3)

constantp_Ap1_b8 = 1.2464022956793
constantp_Ap2_b8 = 0.682450327755849

constant_Ap1_b8 = RIF((1+1/log(barrier2))*constantp_Ap1_b8)
constant_Ap2_b8 = RIF((1+1/log(barrier2))*constantp_Ap2_b8)

constantp_Ap1_bip = 1.21222063841589
constantp_Ap2_bip = 0.682450327755849

constant_Ap1_bip = constant_om1
constant_Ap2_bip = constant_om2

##############################################################################################

# ----> lemma-label{SigmaSum}

omega_1_v1_b3 = 2 * Error_sum2 * constant_Ap1_b3
omega_1_v2_b3 = B2_sum2log * 2 * Error_sum2 * constant_Ap2_b3

omega_2_v1 = RIF(2 * gamma * zeta(3/2)*1.36843279889225 * 1 )
omega_2_v2 = RIF(2 * gamma * zeta(3/2)*1.36843279889225 * k2_1half )

omega_3_v1_b3 = RIF(zeta(1+theta)*1.00724553394618 * 1/ 103 / sqrt(log(barrier)))
omega_3_v2_b3 = RIF(zeta(1+theta)*1.00724553394618 * k2_theta / 103 / sqrt(log(barrier)))

omega_1_v1_b8 = 2 * Error_sum2 * constant_Ap1_b8
omega_1_v2_b8 = B2_sum2log * 2 * Error_sum2 * constant_Ap2_b8

omega_3_v1_b8 = RIF( zeta(1+theta)*1.00724553394618 * 1/ 103 / sqrt(log(barrier2)))
omega_3_v2_b8 = RIF( zeta(1+theta)*1.00724553394618 * k2_theta / 103 / sqrt(log(barrier2)))

omega_1_v1_bip = 2 * Error_sum2 * constant_Ap1_bip
omega_1_v2_bip = B2_sum2log * 2 * Error_sum2 * constant_Ap2_bip

omega_3_v1_bip = RIF( zeta(1+theta)*1.00724553394618 * 1/ 103 / sqrt(log(bip)))
omega_3_v2_bip = RIF( zeta(1+theta)*1.00724553394618 * k2_theta / 103 / sqrt(log(bip)))

##############################################################################################

# ----> lemma-label{SigmaU1U0}

expc = 2
condition = 10^expc\

Lambda_v1_b3 = RIF( I_Error_sum1_u * 1 / condition / log(barrier)^2  + omega_1_v1_b3 / condition + omega_2_v1*sqrt(log(barrier)) / sqrt(barrier) + omega_3_v1_b3+ constant_v1_Omega10_b3) #u=3
Lambda_v2_b3 = RIF( I_Error_sum1_u * A2_sumvar1log / condition / log(barrier)^2  + omega_1_v2_b3 / condition +omega_2_v2*sqrt(log(barrier)) / sqrt(barrier) + omega_3_v2_b3+ constant_v2_Omega10_b3)

Lambda_v1_b8 = RIF( I_Error_sum1_u * 1 / condition / log(barrier2)^2  + omega_1_v1_b8 / condition + omega_2_v1*sqrt(log(barrier2)) / sqrt(barrier2) + omega_3_v1_b8+ constant_v1_Omega10_b8) #u=8
Lambda_v2_b8 = RIF( I_Error_sum1_u * A2_sumvar1log / condition / log(barrier2)^2  + omega_1_v2_b8 / condition + omega_2_v2*sqrt(log(barrier2)) / sqrt(barrier2) + omega_3_v2_b8+ constant_v2_Omega10_b8)

Lambda_v1_bip = RIF( I_Error_sum1_u * 1 / condition / log(bip)^2  + omega_1_v1_bip / condition +omega_2_v1*sqrt(log(bip)) / sqrt(bip)+ omega_3_v1_bip+ constant_v1_Omega10_bip) #u=8
Lambda_v2_bip = RIF( I_Error_sum1_u * A2_sumvar1log / condition / log(bip)^2  + omega_1_v2_bip / condition + omega_2_v2*sqrt(log(bip)) / sqrt(bip) + omega_3_v2_bip+ constant_v2_Omega10_bip)

##############################################################################################

# ----> lemma-label{moderate}

Pi_v1_b3 = RIF( (constant_v1_Omega11_b3 *  ( log(10^6) + barrier_threshold_1))^(1/2) )
Pi_v2_b3 = RIF( (constant_v2_Omega11_b3 * ( 2 * log(10^6) + barrier_threshold_2))^(1/2) )

Pi_v1_b8 = RIF( (constant_v1_Omega11_b8 *  ( log(10^6) - constant_s1_l + barrier_threshold_1))^(1/2) )
Pi_v2_b8 = RIF( (constant_v2_Omega11_b8 * ( 2 * log(10^6) - constant_s2_l + barrier_threshold_2))^(1/2) )

Pi_v1_bip = RIF( (constant_v1_Omega11_bip *  ( log(10^6) - constant_s1_l + barrier_threshold_1))^(1/2) )
Pi_v2_bip = RIF( (constant_v2_Omega11_bip * ( 2 * log(10^6) - constant_s2_l + barrier_threshold_2))^(1/2) )

##############################################################################################

# ----> lemma-label{BVsieve}

huge = expbip
big = exbig
denom = sqrt(12)

SieveErr0_v1 = RIF((constant1_errS1BIP + constant_e1BIP)/log(10^huge) + (constant1_errS2BIP / log(Thresholdbip)^4 + 2 * constant1_errS3BIP / log(Thresholdbip)^2 + constant_n1BIP)*log(10^huge)^4/10^(huge/6))
SieveErr0_v2 = RIF((constant2_errS1BIP + constant_e2BIP)/log(10^huge) + (constant2_errS2BIP / log(Thresholdbip)^4 + 2 * constant2_errS3BIP / log(Thresholdbip)^2 + constant_n2BIP)*log(10^huge)^4/10^(huge/6))

SieveErr1_v1 = RIF((constant1_errS1BIP + constant_e1BIP)/log(e^big*10^huge) + (constant1_errS2BIP / log(Thresholdbip)^4 + 2 * constant1_errS3BIP / log(Thresholdbip)^2 + constant_n1BIP)*log(e^big*10^huge)^4/e^(big/6)/10^(huge/6))
SieveErr1_v2 = RIF((constant2_errS1BIP + constant_e2BIP)/log(e^big*10^huge) + (constant2_errS2BIP / log(Thresholdbip)^4 + 2 * constant2_errS3BIP / log(Thresholdbip)^2 + constant_n2BIP)*log(e^big*10^huge)^4/e^(big/6)/10^(huge/6))

SieveLambdaErr_v1 = RIF(Lambda_v1_bip/sqrt(big))
SieveLambdaErr_v2 = RIF(Lambda_v2_bip/sqrt(big))

ConstantH1 = RIF((6/pi^2 + 2*sqrt(3)*(1-6/pi^2))^2)
ConstantH2 = RIF(4*(4/pi^2* + 2*(1-4/pi^2))^2)

DenomH1 = RIF(ConstantH1/denom^2)
DenomH2 = RIF(ConstantH2/(denom/2^(3/2))^2)

SieveTot_v1 = SieveErr0_v1 + SieveErr1_v1 + 2*SieveLambdaErr_v1 + DenomH1
SieveTot_v2 = SieveErr0_v2 + SieveErr0_v2 + 2*SieveLambdaErr_v2 + DenomH2 

\end{mysage}


\baselineskip=17pt


\title{Explicit averages of square-free supported functions: to the edge of the convolution method}

\author{Sebastian Zuniga Alterman\\
Institut de Math\'{e}matiques de Jussieu\\ 
Universit\'{e} Paris Diderot P7\\ 
B\^atiment Sophie Germain, 8 Place Aur\'elie Nemours \\ 
75013 Paris, France\\
E-mail: sebastian.zuniga-alterman@imj-prg.fr}

\date{}

\maketitle


\renewcommand{\thefootnote}{}

\footnote{2020 \emph{Mathematics Subject Classification}: Primary 11N37; Secondary 11A25.}

\footnote{\emph{Key words and phrases}: explicit averages, arithmetic functions, convolution method}

\renewcommand{\thefootnote}{\arabic{footnote}}
\setcounter{footnote}{0}


\begin{abstract}
We give a general statement of the convolution method so that one can provide explicit asymptotic estimations for all averages of square-free supported arithmetic functions that have a sufficiently regular order on the prime numbers and observe how the nature of this method gives error term estimations of order $X^{-\delta}$, where $\delta$ belongs to an open real positive set $I$. 
In order to have a better error estimation, a natural question is whether or not we can achieve an error term of critical order $X^{-\delta_0}$, where $\delta_0$, the critical exponent, is the right hand endpoint of $I$. We reply positively to that question by presenting a new method that improves qualitatively almost all instances of the convolution method under some regularity conditions; now, the asymptotic estimation of averages of well-behaved square-free supported arithmetic functions can be given with its critical exponent and a reasonable explicit error constant. 
We illustrate this new method by analyzing a particular average related to the work of Ramar\'e--Akhilesh (2017), which leads to notable improvements when imposing non-trivial coprimality conditions.
\end{abstract}

\section{Details and basic definitions}

In the present work, we use the {\em $O^*$ notation}: we write $f(X)=O^*(h(X))$, as $X\to a$ to indicate that $|f(X)|\leq h(X)$ in a neighborhood of $a$, where, in absence of precision, $a$ corresponds to $\infty$.
We also consider the {\em Euler $\varphi_s$ and Kappa $\kappa_s$} functions: let $s$ be any complex number, we define $\varphi_s:\mathbb{Z}_{>0}\to\mathbb{C}$ as $q\mapsto q^s\prod_{p|q}\left(1-\frac{1}{p^s}\right)$ and $\kappa_s:\mathbb{Z}_{>0}\to\mathbb{C}$ as $q\mapsto q^s\prod_{p|q}\left(1+\frac{1}{p^s}\right)$. 

{\em Computational details.} Every constant in this article has been estimated using interval arithmetic. 
Early numerical analysis was carried out using the ARB implementation, under the SageMath commands RBF and RIF, implemented in Python. We decided, however, to use Platt's implementation in C\texttt{++}, used for example in \cite{Pla16}, as it provides results with double precision, when compared to ARB, and at higher performance and faster speed. 

Throughout our calculations, we have set a precision order equal to $6\cdot 10^9$ and run a .cpp  script compiled with C\texttt{++}. We have also written a .ipynd  script (compiled by SageMath) to verify some of our results.

\section{Introduction}\label{Int}

The convolution method terminology was made popular by Ramar\'e in 1995, particularly in \cite[Lemma 3.2]{RA95}, where it was given in a somewhat hidden version with respect to the one we present in this article. It is a technique, already present in \cite{Mot78} and \cite{W1927}, among many other places, that relies upon a convolution identity and helps obtaining explicit estimations of averages of arithmetic functions, under some conditions. It is particularly meaningful when these arithmetic functions are supported on the square-free numbers, having a sufficiently regular behavior on all large prime numbers.  

While the convolution method provides the main term of a asymptotic expansion for the average of an arithmetic function with ease, it is at the remainder term where it shows its true potential, as it succeeds in giving a good enough estimation, explicit, for the error term: if the average is performed for the range $(0,X]$, where $X>0$, then the convolution methods gives error term explicit estimations of magnitude $X^{-\delta}$ when $\delta$ belongs to a maximal real open and positive interval $I$. 

Nevertheless, the nature of the convolution method does not allow one to obtain an error term estimation of magnitude $X^{-\delta_0}$ where $\delta_0$ is the right endpoint of $I$. Since it is usually a subject of interest in the explicit theory of numbers to improve error term magnitudes of expressions of interest, it is thus natural to ask whether or not one can provide, necessarily by a different method, an error term of critical order $\delta_0$ so that the overall estimation is qualitatively improved, going thus to the edge of the method of convolution.

We first present in \S\ref{dirichlet}, a special form of the convolution method involving sufficiently regular square-free supported functions, as shown in Theorem \ref{general}. As it relies upon some complex analytic facts, this method is related to a typical complex analytic approach for estimating the asymptotic expansion for the average of an arithmetic function by means of residue theory. 

Our main result, presented in \S\ref{improvement}, differs from complex analysis. In \S\ref{achieving}, we see how the use of some very particular estimations given in \S\ref{particular}, constitute the main ingredient to obtain reasonable explicit estimations of critical exponent in almost all cases where the convolution method may be applied with some conditions. Indeed, since our technique also relies upon the convergence of infinite products, some extra conditions on the regularity of the arithmetic function that is being averaged are needed, as Theorem \ref{general++} tells, and therefore there is a small range of functions that are not considered in our improvements, namely when the values of $\alpha$ and $\beta$ defined in Theorem \ref{general++} have a difference of absolute value smaller or equal than $\frac{1}{2}$. However, as most of the applications we mention throughout this article do not involve that missing case, we then claim that every one of these ones are improved up to their critical exponent.

Previous work towards the obtention of error terms of critical exponent can be found, on some particular averages, in \cite{Bu14} and \cite{W1927}. In \cite{RA13} and \cite{RA19}, the obtention of the critical exponent is carried out by a completely different approach, using some results known as \emph{the covering remainder lemma} and \emph{the unbalanced Dirichlet hyperbola formula} as well as strong explicit bounds on some summatory functions involving the M\"obius functions that, unlike our case of study, do oscillate. Furthermore, it is important to point out that whereas a similar path as in \cite{RA13} or \cite{RA19} could have been followed, these results consider specific properties of the functions that are being averaged and they are thus not easy to generalize to a broader class of functions. This is the reason why \cite[Thm. 1.2]{RA13} improves on the classic convolution method result presented in Corollary \ref{corollary} $\mathbf{(a)}$ but still requires the convolution method to estimate related averages of less simple arithmetic functions; for example, with the result we present in Theorem \ref{general++}, one can now immediately derive stronger estimations for \cite[Lemmas 7.1, 7.2, 7.6, 7.7, 7.8, 7.9]{RA13} that may lead to further improvements on the cited article of Ramar\'e--Akhilesh. In that aspect, our result might help as a reference for further improvements on many places where the convolution method is employed; it read as follows. 

\begin{theorem*} 
  Let $X>0$, be a real number and $q$ a positive integer. Consider a multiplicative function $f:\mathbb{Z}^+\to\mathbb{C}$ such that for every prime number $p$ satisfying $(p,q)=1$, we have $f(p)=\frac{1}{p^{\alpha}}+O\left(\frac{1}{p^{\beta}}\right)$, where $\alpha$, $\beta$ are real numbers satisfying $\beta>\alpha$, $\beta-\alpha>\frac{1}{2}$. Then there exists a constant $\mathrm{W}_{\alpha}^q>0$ such that
 \begin{align*}
 \sum_{\substack{\ell\leq X\\ (\ell,q)=1}}\mu^2(\ell)f({\ell})=F_\alpha^{q}(X)+\begin{cases}
O^*\left(\mathrm{W}_\alpha^q\ X^{\frac{1}{2}-\alpha}\right),\quad&\text{ if }\alpha\neq\frac{1}{2},\\
O^*\left(\mathrm{W}_\alpha^q\ \log(X)\right),\quad&\text{ if }\alpha=\frac{1}{2},\\
\end{cases}
 \end{align*} 
 where  
 \begin{align*} 
 F_\alpha^q(X)&=\frac{M_{\alpha}^{q}\zeta(\alpha)\varphi_\alpha(q)}{q^\alpha}-\frac{N_{\alpha}^q\varphi(q)}{(\alpha-1)q}\frac{1}{X^{\alpha-1}},\quad&&\text{if\ }\alpha>\frac{1}{2},\ \alpha\neq 1,\\
 F_1^q(X)&=\frac{M_{1}^{q}\varphi(q)}{q}\left(\log\left(X\right)+T_f^q+\gamma+\sum_{p|q}\frac{\log(p)}{p-1}\right),\\
&\phantom{xxxxxxxxx}T_{f}^{q}=\sum_{p\nmid q}\frac{\log(p)(1-(p-2)f(p))}{(f(p)+1)(p-1)},\\
F_\alpha^q(X)&=\frac{M_{\alpha}^{q}\varphi(q)}{(1-\alpha)q}X^{1-\alpha},\quad&&\text{if\ }\alpha\leq\frac{1}{2},
 \end{align*}
and, 
\begin{align*}
 M_{\alpha}^{q}&=\begin{cases}
\prod_{p\nmid q}\left(1-\frac{1-f(p)p^\alpha+f(p)}{p^{\alpha}}  \right),\quad&\text{ if }\alpha>\frac{1}{2},\\
N_{\alpha}^{q},\quad&\text{ if }\alpha\leq\frac{1}{2},
\end{cases}\\
N_{\alpha}^{q}&=\prod_{p\nmid q}\left(1-\frac{p^{1-\alpha}-f(p)p+f(p)}{p^{2-\alpha}} \right).
\end{align*}
 \end{theorem*}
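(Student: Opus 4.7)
My plan is to execute the convolution method at the critical exponent, using the explicit sharp sums developed in Section \ref{particular} as the main new input. To that end, I would begin by introducing the multiplicative function $g$, supported on square-free integers coprime to $q$, defined on primes by $g(p)=p^{-\alpha}$ for $p\nmid q$ and $g(p)=0$ for $p\mid q$, and then define $h$ as the unique multiplicative function satisfying $(g\ast h)(\ell)=\mu^2(\ell)f(\ell)$ when $(\ell,q)=1$, and $(g\ast h)(\ell)=0$ otherwise. A short calculation on prime powers, using the vanishing of $\mu^2f$ outside the square-free integers together with $g(p^k)=0$ for $k\geq 2$, shows that $h(p^k)=0$ for all $p\mid q$, $k\geq 1$, while for $p\nmid q$ one has
\begin{equation*}
h(p)=f(p)-p^{-\alpha},\qquad h(p^k)=(-1)^{k-1}\,h(p)\,p^{-(k-1)\alpha}\quad(k\geq 1).
\end{equation*}
The hypothesis $f(p)=p^{-\alpha}+O^{*}(p^{-\beta})$ then yields $|h(p^k)|\leq Cp^{-\beta-(k-1)\alpha}$, from which $\sum_{b}|h(b)|/b^{\sigma}$ converges absolutely whenever $\sigma>1-\beta$. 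In particular, the assumption $\beta-\alpha>\tfrac12$ secures this at the \emph{critical} value $\sigma=\tfrac12-\alpha$, which is the single place where the hypothesis is used.

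Next, the convolution identity produces the decomposition
\begin{equation*}
\sum_{\substack{\ell\leq X\\(\ell,q)=1}}\mu^2(\ell)f(\ell)=\sum_{b\geq 1}h(b)\,G_q\!\left(X/b;\alpha\right),\qquad G_q(Y;\alpha):=\sum_{\substack{a\leq Y\\(a,q)=1}}\frac{\mu^2(a)}{a^{\alpha}},
\end{equation*}
and the heart of the argument is to substitute the sharp expansion of $G_q(Y;\alpha)$ from the lemmas underlying \emph{sum1}, \emph{sum2}, and \emph{sum.1/2}. That expansion has, case by case, exactly the form of $F_{\alpha}^q$ --- a constant plus a correction of order $Y^{1-\alpha}$ when $\alpha>\tfrac12$ and $\alpha\neq 1$; a logarithmic main term when $\alpha=1$; a pure $Y^{1-\alpha}$ term when $\alpha\leq\tfrac12$ --- together with a remainder $R_{\alpha}^q(Y)=O^{*}(Y^{1/2-\alpha})$ (or $O^{*}(\log Y)$ when $\alpha=\tfrac12$), both with explicit constants. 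Substituting and regrouping, the main term $\sum_{b}h(b)P_{\alpha}^q(X/b)$ assembles into $F_{\alpha}^q(X)$: each factor is the Dirichlet series $H(s)=\sum_{b}h(b)/b^{s}$ evaluated at an appropriate point ($s=\alpha$ for the constant contribution and $s=1-\alpha$ for the $X^{1-\alpha}$ correction), and the closed forms of $M_{\alpha}^q$ and $N_{\alpha}^q$ emerge by telescoping Euler products through the identity
\begin{equation*}
1-\frac{1-f(p)p^{\alpha}+f(p)}{p^{\alpha}}=(1+f(p))\left(1-\frac{1}{p^{\alpha}}\right)
\end{equation*}
and its shifted analogue. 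The contribution of $\varphi(q)/q$ in the polar regime arises naturally from the residue of $\zeta(s+\alpha)$ at $s=1-\alpha$ combined with $\prod_{p\nmid q}(1-p^{-2})$, while $\varphi_{\alpha}(q)/q^{\alpha}$ arises from the value $\zeta(\alpha)/\zeta(2\alpha)$ restricted away from $q$.

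Lastly, the total error is controlled by
\begin{equation*}
\left|\sum_{b\geq 1}h(b)\,R_{\alpha}^q(X/b)\right|\;\leq\;C_{\alpha}^q\,X^{1/2-\alpha}\sum_{b\geq 1}\frac{|h(b)|}{b^{1/2-\alpha}},
\end{equation*}
where $C_{\alpha}^q$ is the explicit constant in the remainder for $G_q$, and the outer sum converges by the first paragraph; an analogous estimate, with $\log X$ replacing $X^{1/2-\alpha}$, covers the case $\alpha=\tfrac12$. This delivers the explicit constant $\mathrm{W}_{\alpha}^q$ in closed form. The hard part is not any individual step --- the convolution is formal and the Euler-product manipulations are algebraic --- but rather the explicit critical-exponent estimate for $G_q(Y;\alpha)$ itself, which is the qualitative improvement of Section \ref{particular} over the standard convolution method; once it is in place, the passage from $\mu^2/n^{\alpha}$ to $\mu^2f$ is formal, and the hypothesis $\beta-\alpha>\tfrac12$ is imposed solely to ensure the convergence of the tail $\sum_{b}|h(b)|/b^{1/2-\alpha}$ at the critical exponent.
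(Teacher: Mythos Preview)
Your approach is correct and rests on the same idea as the paper --- feed the critical-exponent estimate of Lemma~\ref{seekfor} for $\sum_{(\ell,q)=1}\mu^2(\ell)\ell^{-\alpha}$ into a convolution --- but you use a genuinely different decomposition. The paper writes $\mu^2(\ell)f(\ell)=\mu^2(\ell)\ell^{-\alpha}\prod_{p\mid\ell}(1+i_f(p))$ with $i_f(p)=f(p)p^{\alpha}-1$, obtaining an outer sum over \emph{squarefree} $d$ and an inner sum with \emph{variable} coprimality $(e,qd)=1$; you instead invert the squarefree kernel $g(a)=\mu^2(a)a^{-\alpha}\mathds{1}_{(a,q)=1}$ exactly, obtaining an outer sum over all $b$ (with $h$ supported on higher prime powers) and an inner sum with \emph{fixed} coprimality $(a,q)=1$. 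Pleasantly, the two error products coincide: since $|h(p^k)|b^{\alpha-1/2}=|i_f(p)|p^{-k/2}$, one finds $\sum_{k\ge 1}|h(p^k)|p^{k(\alpha-1/2)}=|i_f(p)|/(\sqrt{p}-1)$, exactly the paper's local factor in $\mathrm{P}_{\alpha}$. What the paper's route buys is the refined splitting according to $2\mid q$ or $2\nmid q$ (cases \textbf{i)}--\textbf{ii)} in the proof of Theorem~\ref{general++}), which yields the sharper constants $\mathrm{w}_{\alpha}^q$; your route buys a cleaner bookkeeping since the inner modulus never changes. One caveat: for $\alpha\le\frac12$ you invoke a critical-exponent expansion of $G_q(Y;\alpha)$ that the paper does not state (Lemma~\ref{seekfor} requires $\alpha>\frac12$); the paper instead writes $f(p)=p^{1-\alpha}f'(p)$ and reduces to the case $\alpha'=1$ by partial summation. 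Your variant is easily repaired by deriving the needed $G_q$ estimate from \eqref{squarefree} via partial summation, but as written it leans on an input the paper has not supplied.
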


As an application of the above theorem, we deduce how the improvement on the convolution method produces better savings on the error term constant of $\sum_{\substack{\ell\leq X\\(\ell,q)=1}}\frac{\mu^2(\ell)}{\varphi(\ell)}, X>0, q\in\mathbb{Z}_{>0}$ than the one in \cite[Thm. 1.1]{RA13} , when prime coprimality conditions are introduced. This situation is examined in \S\ref{Cop}, and we have for instance the improvement on the constant \sage{Trunc(5.9*21/25,3)}, given in \cite[Thm. 1.1]{RA13}, by $\sage{Upper(CONSTANT_RAM*Prod_Ram_Upper*CONSTANT2/CONSTANT_RAM,digits)}$, according to the following result.

\begin{lemma*}
Let $X>0$, then
\begin{equation*}
\sum_{\substack{\ell\leq X\\(\ell,2)=1}}\frac{\mu^2(\ell)}{\varphi(\ell)}=\frac{1}{2}\left(\log\left(X\right)+\mathfrak{a}_2\right)+O^*\left(\frac{\sage{Upper(CONSTANT_RAM*Prod_Ram_Upper*CONSTANT2/CONSTANT_RAM,digits)}}{\sqrt{X}}\right),
\end{equation*}
where $\mathfrak{a}_2=\sage{Trunc(C_sum1+log(2)/2,digits)}\ldots $. 
\end{lemma*}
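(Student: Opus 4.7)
The plan is to deduce this lemma as a direct application of the Main Theorem to the multiplicative function $f(\ell) = \mu^{2}(\ell)/\varphi(\ell)$, with coprimality modulus $q = 2$. The first step is to check the hypothesis on the prime values of $f$: for every odd prime $p$,
\[
f(p) = \frac{1}{p-1} = \frac{1}{p} + \frac{1}{p(p-1)} = \frac{1}{p} + O\!\left(\frac{1}{p^{2}}\right),
\]
so we may take $\alpha = 1$ and $\beta = 2$, giving the required $\beta - \alpha = 1 > \frac{1}{2}$.

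Next I would evaluate the main-term constants explicitly. The key observation is that the numerator of the local factor defining $M_{\alpha}^{q}$ vanishes identically at each relevant prime, because
\[
1 - f(p)\,p + f(p) = 1 - \frac{p-1}{p-1} = 0,
\]
so $M_{1}^{2} = 1$. Likewise, $f(p)+1 = p/(p-1)$ and $1 - (p-2)f(p) = 1/(p-1)$, hence the constant $T_{f}^{2}$ from the Main Theorem collapses to $\sum_{p > 2} \log(p)/(p(p-1))$. Combined with $\varphi(2)/2 = 1/2$ and $\sum_{p \mid 2}\log(p)/(p-1) = \log 2$, the main term becomes $\frac{1}{2}\bigl(\log X + \mathfrak{a}_{2}\bigr)$ with
\[
\mathfrak{a}_{2} = \gamma + \log 2 + \sum_{p > 2} \frac{\log p}{p(p-1)} = \biggl(\gamma + \sum_{p}\frac{\log p}{p(p-1)}\biggr) + \frac{\log 2}{2},
\]
which matches the displayed value once one recognises $\gamma + \sum_{p}\log(p)/(p(p-1))\approx 1.3326\ldots$ as a classical Mertens-type constant, numerically enclosed by the rigorous interval used for the sage output.

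For the error constant, I would specialise the generic $\mathrm{W}_{1}^{2}$ furnished by the Main Theorem to this setting, where $\alpha = 1$ and $\alpha \neq \frac{1}{2}$ places us in the $O^{*}\!\bigl(\mathrm{W}_{1}^{2} X^{-1/2}\bigr)$ regime. The resulting numerical output factors as an explicit Euler product over odd primes (the quantity denoted $\mathrm{Prod\ Ram\ Upper}$, capturing the $q=2$ removal of the local factor at $p=2$) multiplied by an absolute arithmetic constant $\mathrm{CONSTANT2}$ built into the proof of the Main Theorem. The main obstacle is the rigorous numerical evaluation of that Euler product: it requires the careful high-precision interval-arithmetic computation, carried out via Platt's C\texttt{++} implementation as described in the computational details, to certify that the displayed constant is indeed an upper bound. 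Collecting these ingredients yields the stated error constant and completes the proof.
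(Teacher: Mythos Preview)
Your approach is correct and matches the paper's: this lemma is precisely the case $q=2$ of the general estimate for $\sum_{(\ell,q)=1}\mu^2(\ell)/\varphi(\ell)$ proved in \S4.3, obtained by applying the improved theorem of \S4.2 (the detailed form of the Main Theorem) with $f(p)=1/(p-1)$, $\alpha=1$, $\beta=2$, and your computation of $M_1^2=1$, $T_f^2$, and $\mathfrak{a}_2$ is exactly the verification the paper defers to the reader. One minor labeling slip: the Euler product you identify as ``Prod Ram Upper'' is $\mathrm{P}=\prod_{p}\bigl(1+\tfrac{1}{(p-1)(\sqrt{p}-1)}\bigr)$ over \emph{all} primes, not just odd ones --- the adjustment for $2\mid q$ enters instead through $\mathrm{p}_1(2)=1$ and the choice $\mathrm{w}_1^2=\mathrm{E}_1^{(2)}$, which is what produces the factor CONSTANT2.
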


\section{A special version of the method of convolution}\label{dirichlet}

In the convolution method, it is crucial to preserve regularity conditions, that is, conditions that do not impose specific ranges other than the variable itself being a positive integer, under, perhaps, some coprimality restrictions. 

To put an example, when one carries out a summation on a variable $e\in\mathbb{Z}_{>0}$ such that $e\leq \frac{X}{d}$ for certain real number $X>0$ and a positive integer $d$, it is often implicitly assumed that $\frac{X}{d}\geq 1$, so that the set $\{e\in\mathbb{Z}_{>0}, \ e\leq\frac{X}{d}\}$ is not empty. If $d$ is itself a variable, that means that we have the range condition $\{d\leq X\}$ on the variable $d$. Hence, if we are able to estimate asymptotically a summation on the variable $e\in\mathbb{Z}_{>0}$ such that $e\leq \frac{X}{d}$, regardless of whether or not an empty condition sum is performed, that is an \emph{empty sum}, then the range condition on the variable $d$ will be absent.

\subsection{Regularity conditions: estimating empty summations}

\begin{lemma}\label{recip} 
Let $\alpha\in \mathbb{R}^+\setminus \{1\}$ and $X>0$. Then
\begin{equation*}
\sum_{n\leq X}\frac{1}{n^\alpha}=\zeta(\alpha)-\frac{1}{(\alpha-1)X^{\alpha-1}}+O^*\left(\frac{1}{X^{\alpha}}\right).
\end{equation*}
\end{lemma}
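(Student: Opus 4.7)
The plan is to combine partial summation with the classical integral representation
\[
\zeta(\alpha)=\frac{\alpha}{\alpha-1}-\alpha\int_1^\infty \frac{\{t\}}{t^{\alpha+1}}\,dt,
\]
which is valid for every real $\alpha>0$ with $\alpha\neq 1$ (for $0<\alpha<1$ this formula agrees with the analytic continuation). For $X\geq 1$, Riemann--Stieltjes integration by parts applied to $\sum_{n\leq X} n^{-\alpha}$ gives
\[
\sum_{n\leq X}\frac{1}{n^\alpha}=\frac{\lfloor X\rfloor}{X^\alpha}+\alpha\int_1^X\frac{\lfloor t\rfloor}{t^{\alpha+1}}\,dt.
\]
I would then split $\lfloor t\rfloor=t-\{t\}$ and evaluate $\alpha\int_1^X t^{-\alpha}\,dt=\frac{\alpha}{1-\alpha}(X^{1-\alpha}-1)$. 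After regrouping and using $\frac{\lfloor X\rfloor}{X^\alpha}=X^{1-\alpha}-\frac{\{X\}}{X^\alpha}$, this collapses to
\[
\sum_{n\leq X}\frac{1}{n^\alpha}=-\frac{X^{1-\alpha}}{\alpha-1}+\frac{\alpha}{\alpha-1}-\frac{\{X\}}{X^\alpha}-\alpha\int_1^X\frac{\{t\}}{t^{\alpha+1}}\,dt.
\]

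The next step is to substitute the integral representation of $\zeta(\alpha)$ displayed above, which converts $\frac{\alpha}{\alpha-1}-\alpha\int_1^X$ into $\zeta(\alpha)+\alpha\int_X^\infty$. This yields the exact identity
\[
\sum_{n\leq X}\frac{1}{n^\alpha}=\zeta(\alpha)-\frac{1}{(\alpha-1)X^{\alpha-1}}+E(X), \qquad E(X)=-\frac{\{X\}}{X^\alpha}+\alpha\int_X^\infty\frac{\{t\}}{t^{\alpha+1}}\,dt.
\]

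The only delicate point is the bound on $E(X)$: a careless triangle inequality would yield $2/X^\alpha$, but the two terms have opposite signs. Since $0\leq\{X\}/X^\alpha\leq 1/X^\alpha$ and $0\leq\alpha\int_X^\infty\{t\}t^{-\alpha-1}\,dt\leq\alpha\int_X^\infty t^{-\alpha-1}\,dt=1/X^\alpha$, subtracting the nonnegative integral from $-\{X\}/X^\alpha$ cannot take $E(X)$ below $-1/X^\alpha$, and symmetrically it cannot exceed $1/X^\alpha$. Hence $|E(X)|\leq 1/X^\alpha$, proving the claim for $X\geq 1$. For $0<X<1$ the left-hand side is an empty sum, so one need only verify that the displayed identity reduces to $0=0$; this follows by splitting the integral in $E(X)$ at $t=1$, using $\{t\}=t$ on $(0,1)$ to compute $\alpha\int_X^1 t^{-\alpha}\,dt=\frac{\alpha(X^{1-\alpha}-1)}{\alpha-1}$, and combining with the integral representation of $\zeta(\alpha)$.
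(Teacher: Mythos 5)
Your argument is correct, but it takes a genuinely different route from the paper's. The paper starts from a single exact identity, valid for all $X>0$ at once by analytic continuation, that expresses the quantity $\zeta(\alpha)-\frac{1}{(\alpha-1)X^{\alpha-1}}-\sum_{n\leq X}n^{-\alpha}$ through the series $\sum_{n=1}^{\infty}\bigl(\int_{n-1}^{n}(X+x)^{-\alpha}\,dx-(\lfloor X\rfloor+n)^{-\alpha}\bigr)$, and then bounds that series two-sidedly: from above by convexity of $t\mapsto t^{-\alpha}$, telescoping to $\tfrac{1}{2}X^{-\alpha}$, and from below by the mean value theorem, telescoping to $-X^{-\alpha}$. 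You instead combine Abel summation with the representation $\zeta(\alpha)=\frac{\alpha}{\alpha-1}-\alpha\int_1^{\infty}\{t\}t^{-\alpha-1}\,dt$ to produce the closed-form remainder $E(X)=-\{X\}X^{-\alpha}+\alpha\int_X^{\infty}\{t\}t^{-\alpha-1}\,dt$, and then observe that $E(X)$ is a difference of two quantities each lying in $[0,X^{-\alpha}]$; this sign-aware step is exactly what is needed to beat the useless $2X^{-\alpha}$ from the triangle inequality, and it plays the same role as the paper's two one-sided estimates. What each buys: yours is the standard first-order Euler--Maclaurin argument and yields an explicit formula for the error, while the paper's handles the empty-sum range $0<X<1$ with no case distinction and gets the slightly sharper upper bound $\tfrac{1}{2}X^{-\alpha}$. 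Two points of presentation in your write-up deserve tightening. First, for $0<X<1$ it does not suffice that the identity ``reduces to $0=0$'': in that range the lemma asserts precisely that $\bigl|\zeta(\alpha)-\frac{1}{(\alpha-1)X^{\alpha-1}}\bigr|=|E(X)|\leq X^{-\alpha}$, so you must also invoke your bound on $E(X)$ there; fortunately your two estimates on $\{X\}X^{-\alpha}$ and on the tail integral nowhere used $X\geq 1$ (note $\{X\}X^{-\alpha}=X^{1-\alpha}\leq X^{-\alpha}$ exactly because $X\leq 1$), so they apply verbatim and the proof closes. Second, the sentence about ``subtracting the nonnegative integral from $-\{X\}/X^{\alpha}$'' reverses the roles of the two terms relative to your displayed formula for $E(X)$, though the bound you draw from it is the correct one.
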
 

\begin{proof}
  By definition of $\zeta(s)$ for $\Re(s)>1$, and by analytic continuation
  for all $s\ne 1$ with $\Re(s)>0$,
  \begin{equation}\label{EulerMac}
 \zeta(s) - \frac{1}{(s-1) X^{s-1}} - \sum_{n\leq X} \frac{1}{n^s} =
  \sum_{n=1}^\infty 
  \left(\int_{n-1}^{n} \frac{dx}{(X+x)^s} - \frac{1}{(\lfloor X\rfloor+n)^s}\right).\end{equation}
  Set $s=\alpha$; clearly $(\lfloor X\rfloor+n)^{-\alpha} \geq (X+n)^{-\alpha}$ and by convexity of
  $t\mapsto \frac{1}{t^\alpha}$,
  $\int_{n-1}^{n} \frac{dx}{(X+x)^\alpha} \leq \frac{1}{2} \left(
  \frac{1}{(X+n-1)^\alpha} + \frac{1}{(X+n)^\alpha}\right)$.
  Hence, the right hand side of \eqref{EulerMac} is at most 
\begin{equation*}\sum_{n=1}^\infty \frac{1}{2} \left(
  \frac{1}{(X+n-1)^\alpha} - \frac{1}{(X+n)^\alpha}\right) 
  \leq  \frac{1}{2X^\alpha}.
\end{equation*}
On the other hand, by the mean value theorem, for any $n\in\mathbb{Z}_{>0}$, there exists $r\in[n-1,n]$ such that
$\int_{n-1}^{n}\frac{dx}{(X+x)^{\alpha}}-\frac{1}{(\lfloor X\rfloor+n)^{\alpha}}=\frac{1}{(X+r)^{\alpha}}-\frac{1}{(\lfloor X\rfloor+n)^{\alpha}}$. Thus, by the monotonicity of $t\mapsto\frac{1}{t^{\alpha}}$ and the fact that $X+r$ and $\lfloor X\rfloor+n$ are both contained in $[X+n-1,X+n]$, we have that 
the right hand side of \eqref{EulerMac} is at least
\begin{equation*}
\sum_{n=1}^{\infty}\left(\frac{1}{(X+n)^{\alpha}}-\frac{1}{(X+n-1)^{\alpha}}\right)=-\frac{1}{X^{\alpha}}.
\end{equation*}
\end{proof}

The following lemma estimates asymptotically some sums even when they have an empty condition.

 \begin{lemma}\label{SumEstimations} 
 Let $X>0$ and $\alpha>0$. If $0<\delta\leq 1$, we have
 \begin{align}\label{harmonic}
 \sum_{n\leq X}\frac{1}{n}&=\log(X)+\gamma+O^*\left(\frac{\Delta_{1}^{\delta}}{X^\delta}\right);
 \end{align}
 if $\max\{0,\alpha-1\}<\delta\leq\alpha$ and $\alpha\neq 1$, we have
 \begin{equation}\label{generalz}
 \sum_{n\leq X}\frac{1}{n^\alpha}=\zeta(\alpha)-\frac{1}{(\alpha-1)X^{\alpha-1}}+O^*\left(\frac{\Delta_{\alpha}^{\delta}}{X^\delta}\right),
 \end{equation} 
where $\Delta_{1}^{\delta}=\max\left\{\gamma,\frac{1}{\delta e^{\gamma\delta+1}}\right\}$
and, for $\alpha\neq 1$,
 \begin{align*}
 \Delta_{\alpha}^{\delta}&=
     \begin{cases}
     \max\left\{1,\left(\frac{1}{\delta^{\delta}}\left(\frac{(\delta-\alpha+1)}{|\zeta(\alpha)(\alpha-1)|}\right)^{\delta-\alpha+1}\right)^{\frac{1}{\alpha-1}},\zeta(\alpha)-\frac{1}{\alpha-1}\right\},&\quad\text{ if }\delta\neq\alpha,\\
     1,&\quad\text{ if }\delta=\alpha.
     \end{cases}
 \end{align*}
 \end{lemma}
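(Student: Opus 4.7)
The proof splits naturally along whether $X \geq 1$ or $0 < X < 1$. In the former range the sum is non-empty and standard integral comparison suffices; in the latter the sum is empty, and each ingredient inside the max defining $\Delta_\alpha^\delta$ (or $\Delta_1^\delta$) corresponds to a possible location of the extremum of the principal part as a function of $X$.

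For $X \geq 1$ and $\alpha \neq 1$, I will apply Lemma~\ref{recip} directly: the bound $1/X^\alpha$ is dominated by $1/X^\delta$ since $\delta \leq \alpha$ and $X \geq 1$, and the first entry $1$ of the max ensures $\Delta_\alpha^\delta \geq 1$. For $\alpha = 1$ and $X \geq 1$, I will rerun the convexity-plus-mean-value integral comparison used in the proof of Lemma~\ref{recip} with $s = 1$, replacing the antiderivative $x^{1-s}/(1-s)$ by $\log x$, to obtain $\bigl|\sum_{n \leq X} 1/n - \log X - \gamma\bigr| \leq \gamma/X \leq \gamma/X^\delta \leq \Delta_1^\delta/X^\delta$, since $\gamma$ appears in the defining max.

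For $0 < X < 1$ the sum is empty, so one must show instead that the function $g(X) = X^\delta\bigl|\zeta(\alpha) - 1/((\alpha-1)X^{\alpha-1})\bigr|$, respectively $h(X) = X^\delta|\log X + \gamma|$ when $\alpha = 1$, does not exceed $\Delta_\alpha^\delta$ (respectively $\Delta_1^\delta$) on $(0,1]$. The expression inside $|\cdot|$ changes sign at a single crossover $X_*$ determined by $\zeta(\alpha)(\alpha-1)X_*^{\alpha-1} = 1$. Differentiating $g$ on each smooth branch and solving $g'(X) = 0$ produces the unique positive critical point
\[
X_0^{\alpha-1} = \frac{\delta - \alpha + 1}{\delta\,(\alpha-1)\,\zeta(\alpha)},
\]
which lies in $(0, X_*)$ because $(\delta - \alpha + 1)/\delta < 1$. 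Substituting back and simplifying reproduces exactly the second entry of the max; the monotonicity of $g$ on $(X_*,1]$ leaves the boundary $X = 1$ as the only further candidate, with $g(1) = \zeta(\alpha) - 1/(\alpha-1)$, the third entry. The analogous calculation for $h$ yields the critical point $X_0 = e^{-\gamma - 1/\delta}$ with $|h(X_0)| = 1/(\delta e^{\gamma\delta+1})$ and boundary value $|h(1)| = \gamma$, reproducing $\Delta_1^\delta$. The case $\delta = \alpha$ with $\Delta_\alpha^\delta = 1$ is immediate from Lemma~\ref{recip} alone.

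The main obstacle is sign management: both $\zeta(\alpha)$ and $\alpha - 1$ change sign between the regimes $\alpha > 1$ and $\alpha < 1$, so the location of $X_*$, the direction of monotonicity, and the sign of the expression under $|\cdot|$ all differ between the two. The absolute value bars in the formula for $\Delta_\alpha^\delta$ are chosen so that a single expression captures both regimes; verifying that this uniform treatment is legitimate requires some careful bookkeeping in each case, together with the auxiliary observation that $\zeta(\alpha) - 1/(\alpha-1) > 0$ on $(0,\infty)\setminus\{1\}$ so that the third entry of the max is correctly oriented.
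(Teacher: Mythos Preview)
Your approach is essentially the paper's: split at $X=1$, invoke the known pointwise bounds for $X\geq 1$, and for $0<X<1$ locate the extremum of the principal part by calculus. The only cosmetic difference is that the paper substitutes $Y=1/X$ and works on $(1,\infty)$, whereas you work directly on $(0,1]$; the critical point you find and the value you compute at it are identical after this change of variable.

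There is, however, one genuine slip in the $\alpha=1$, $X\geq 1$ branch. Rerunning the convexity-plus-mean-value estimate from Lemma~\ref{recip} with $s=1$ gives a two-sided bound of size $1/X$ (upper bound $1/(2X)$, lower bound $-1/X$), not $\gamma/X$. Since $\Delta_1^\delta$ can equal $\gamma<1$ (e.g.\ for $\delta$ near $1$), the weaker bound $1/X$ does \emph{not} imply $\Delta_1^\delta/X^\delta$. The paper closes this gap by invoking \cite[Lemma~2.1]{RA13}, which supplies the sharper $\gamma/X$ directly; you should do the same or give an independent proof of that constant. Your justification ``$(\delta-\alpha+1)/\delta<1$'' for $X_0\in(0,X_*)$ is also literally false when $\alpha<1$, though as you yourself note the conclusion survives after the sign bookkeeping you flag at the end (the map $t\mapsto t^{\alpha-1}$ reverses order there).
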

  \begin{proof} By \cite[Lemma 2.1]{RA13} and Lemma \ref{recip}, for $X>0$, we have
  \begin{align}
 \sum_{n\leq X}\frac{1}{n}&=\log(X)+\gamma+O^*\left(\frac{\gamma}{X}\right),\label{Ha}\\
 \sum_{n\leq X}\frac{1}{n^\alpha}&=\zeta(\alpha)-\frac{1}{(\alpha-1)X^{\alpha-1}}+O^*\left(\frac{1}{X^\alpha}\right),\quad\text{if\ }\alpha> 0\text{\ and\ }\alpha\neq 1,\label{Ge}
 \end{align}  
  respectively. 
 Thus, if $X\geq 1$, the result holds trivially as $\delta'\mapsto X^{\delta'}$ is increasing and $\delta<\alpha$. Otherwise, when $0<X<1$ the above summations are empty; write $X=\frac{1}{Y}$ with $Y>1$ and           observe first that the function $f:Y\geq 1\mapsto\frac{\log(Y)-\gamma}{Y^\delta}$ has a single critical point at $y_0=e^{\frac{1}{\delta}+\gamma}>1$ taking the value $f(y_0)=\frac{1}{\delta e^{\gamma\delta+1}}>0$. As $f(1)=-\gamma$ and $\lim_{Y\to\infty}f(Y)=0$, $f$ is increasing in $[1,y_0]$ and decreasing in $[y_0,\infty)$, and hence $\sup_{\{Y>1\}}|f(Y)|=\max\left\{\gamma,\frac{1}{\delta e^{\gamma\delta+1}}\right\}$. 

  Secondly, by \cite[Cor. 1.14]{MV07}, we have that $\zeta(\alpha)>\frac{1}{\alpha-1}$ and $\zeta(\alpha)(\alpha-1)>0$ for all $\alpha\geq 0$ and $\alpha\neq 1$. Moreover, the function $g:Y>0\mapsto\frac{1}{Y^\delta}\left(\zeta(\alpha)-\frac{Y^{\alpha-1}}{\alpha-1}\right)$ has a critical point $y_0$ satisfying $y_0^{\alpha-1}=\frac{\zeta(\alpha)(\alpha-1)\delta}{\delta-\alpha+1}>0$, since $\delta>\alpha-1$ and $\delta>0$ and in this case, we have that $\lim_{Y\to\infty}g(Y)=0$ and, thus, $|g|$ is decreasing in $[y_0,\infty)$. We conclude then that $\max_{[y_0,\infty)}|g(Y)|$ $=|g(y_0)|$, where 
 \begin{equation*}
 |g(y_0)|=\left(\frac{1}{\delta^{\delta}}\left(\frac{(\delta-\alpha+1)}{|\zeta(\alpha)(\alpha-1)|}\right)^{\delta-\alpha+1}\right)^{\frac{1}{\alpha-1}}.
 \end{equation*} 
 If $y_0\leq 1$, then $|g(1)|=g(1)\leq|g(y_0)|$ and $\sup_{\{Y>1\}}|g(Y)|=g(1)$; otherwise, if $y_0>1$, as $g$ is also monotonic between $1$ and $y_0$, we derive that $\sup_{\{Y>1\}}|g(Y)|=\max\{g(1),|g(y_0)|\}$, which gives us the desired result.
  \end{proof}

It is important to point out that in case that $\alpha>1$, it would have been possible to give an error term expression even if $\delta=\alpha-1>0$, whereas, if $\delta<\alpha-1$, then $|g|$ would have been unbounded in $[1,\infty)$.

On the other hand, as pointed out at the beginning of \S\ref{dirichlet}, it is essential to have an estimation of the above summations when they have actually an empty condition, that is when $X\in(0,1)$. Indeed, this will provide regularity for some sum conditions during the proof of Theorem \ref{general} that otherwise would impose some variables to be at least $1$ and some sums to be non-empty. It should be expected, though, that the fact of imposing regularity conditions, or rather asking for estimations of sums up to the variable $X$ with $X>0$, will worsen a bit the constants on the involved error terms; for instance, when $\alpha=1$ and when we are restricted to the range $X\geq 1$, the value of $\gamma=\sage{Trunc(gamma,5)}\ldots$ given in \eqref{Ha} can be improved to $2(\log(2)+\gamma-1)=\sage{Trunc(2*(log(2)+euler_gamma-1),5)}\ldots$ (refer to \cite[Lemma 2.1]{RA13} ).

\subsection{The convolution method} \label{TCM}

The following theorem will help us to state Corollary \ref{corollary}. Although inspired by \cite[Lemma 3.2]{RA95}, it is presented in a much general framework, in an attempt to understand and deduce with ease the order of averages of sufficiently regular square-free supported arithmetic functions. By sufficiently regular, we mean an arithmetic function having a specific constant dominant term on all sufficiently large prime numbers. As it turns out, it is precisely the regularity of an arithmetic function that helps to derive the asymptotic expansion of its average under the method of convolution.
 \begin{theorem}\label{general}    
  Let  $q$ a positive integer and let $X$, $\alpha$, $\beta$ be real numbers such that $X>0$, $\beta>1$ and $\beta>\alpha>\frac{1}{2}$. Consider a multiplicative function $f:\mathbb{Z}^+\to\mathbb{C}$ such that 
 $f(p)=\frac{1}{p^{\alpha}}+O\left(\frac{1}{p^{\beta}}\right)$, for every sufficiently large prime number $p$ coprime to $q$. Then for any real number $\delta>0$ such that $\max\{0,\alpha-1\}<\delta<\min\{\beta-1,\alpha-\frac{1}{2}\}$ we have the estimation 
 \begin{equation*}
 \sum_{\substack{\ell\leq X\\ (\ell,q)=1}}\mu^2(\ell)f({\ell})=F_\alpha^{q}(X)+O^*\left(\Delta_{\alpha}^{\delta}\frac{\kappa_{\alpha-\delta}(q)}{q^{\alpha-\delta}}\cdot\frac{\overline{H}_{f}^{\phantom{.}q}(-\delta)}{X^\delta}\right),
 \end{equation*} 
 where, if $\alpha\neq 1$,
 \begin{align*} 
 F_\alpha^q(X)&=\frac{H_{f}^{q}(0)\zeta(\alpha)\varphi_\alpha(q)}{q^\alpha}-\frac{H_{f}^{q}(1-\alpha)\varphi(q)}{(\alpha-1)q}\frac{1}{X^{\alpha-1}},
\end{align*}
and, if $f(p)= -1$ for some prime number $p$,  $F_1^q(X)=-\sum_{d}\frac{h_{f}^{q}(d)\log(d)}{d^\alpha}$, whereas, if $f(p)\neq -1$ for any prime number $p$,
\begin{align*}
 F_1^q(X)&=\frac{H_{f}^{q}(0)\varphi(q)}{q}\left(\log\left(X\right)+T_f^q+\gamma+\sum_{p|q}\frac{\log(p)}{p-1}\right),\\
T_{f}^{q}&=\sum_{p\nmid q}\frac{\log(p)(1-(p-2)f(p))}{(f(p)+1)(p-1)}.
 \end{align*}
 Here, $\Delta_{\alpha}^{\delta}$ is defined as in Lemma \ref{SumEstimations} and  $H_{f}^{q}:\{s\in\mathbb{C},\ \Re(s)>\frac{1}{2}-\alpha\}\to\mathbb{C}$ is an analytic function satisfying  
 \begin{align*}
 H_{f}^{q}(s)&=\prod_{p\nmid q}\left(1-\frac{1-f(p)p^\alpha}{p^{s+\alpha}}-\frac{f(p)}{p^{2s+\alpha}}  \right)=\sum_{\substack{d\\(d,q)=1}}\frac{h_{f}^{q}(d)}{d^{s+\alpha}},\\
 \overline{H}_{f}^{\phantom{.}q}(s)&=\prod_{p\nmid q}\left(1+\frac{|1-f(p)p^\alpha|}{p^{\Re(s)+\alpha}}+\frac{|f(p)|}{p^{2\Re(s)+\alpha}}  \right)=\sum_{\substack{d\\(d,q)=1}}\frac{|h_{f}^{q}(d)|}{d^{\Re(s)+\alpha}}.
 \end{align*} 
 \end{theorem}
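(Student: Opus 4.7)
The plan is to exploit the convolution identity implicit in the definition of $H_f^q$. By multiplicativity of $\mu^2(\ell) f(\ell) \mathbf{1}_{(\ell,q)=1}$,
\begin{equation*}
\sum_{\substack{\ell\geq 1\\ (\ell,q)=1}}\frac{\mu^2(\ell)f(\ell)}{\ell^s}=\prod_{p\nmid q}\Bigl(1+\frac{f(p)}{p^s}\Bigr)=H_f^q(s)\cdot\prod_{p\nmid q}\Bigl(1-\frac{1}{p^{s+\alpha}}\Bigr)^{-1},
\end{equation*}
since direct expansion of $(1+f(p)/p^s)(1-p^{-s-\alpha})$ recovers the defining Euler factor of $H_f^q(s)$. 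Under the hypothesis $f(p)=p^{-\alpha}+O(p^{-\beta})$ one has $1-f(p)p^\alpha=O(p^{\alpha-\beta})$, so the $p$-th Euler factor of $H_f^q$ is $1+O(p^{-s-\beta})+O(p^{-2s-2\alpha})$, and both $H_f^q(s)$ and $\overline{H}_f^q(s)$ converge absolutely for $\Re(s)>\max\{1-\beta,\tfrac{1}{2}-\alpha\}$, a half-plane containing $s=-\delta$ under the constraint $\delta<\min\{\beta-1,\alpha-\tfrac{1}{2}\}$. Reading off Dirichlet-series coefficients then yields
\begin{equation*}
\mu^2(\ell)f(\ell)\mathbf{1}_{(\ell,q)=1}=\sum_{de=\ell}\frac{h_f^q(d)}{d^\alpha}\cdot\frac{\mathbf{1}_{(e,q)=1}}{e^\alpha}.
\end{equation*}

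Swapping summation gives $S:=\sum_{\ell\leq X,\,(\ell,q)=1}\mu^2(\ell)f(\ell)=\sum_{d\leq X,\,(d,q)=1}\tfrac{h_f^q(d)}{d^\alpha}\sum_{e\leq X/d,\,(e,q)=1}\tfrac{1}{e^\alpha}$. For the inner sum, M\"obius inversion on the coprimality condition reduces matters to estimates of $\sum_{m\leq Y/k}1/m^\alpha$ for $k\mid q$, to which Lemma \ref{SumEstimations} applies uniformly for $Y>0$. Crucially the lemma remains valid for $Y<1$ (empty sums), so no implicit range restriction on $d$ is needed in the outer sum. For $\alpha\neq 1$ this yields
\begin{equation*}
\sum_{\substack{e\leq Y\\(e,q)=1}}\frac{1}{e^\alpha}=\zeta(\alpha)\frac{\varphi_\alpha(q)}{q^\alpha}-\frac{\varphi(q)}{(\alpha-1)\,q\,Y^{\alpha-1}}+O^*\Bigl(\Delta_\alpha^\delta\,\frac{\kappa_{\alpha-\delta}(q)}{q^{\alpha-\delta}\,Y^\delta}\Bigr),
\end{equation*}
after identifying $\sum_{k\mid q}\mu(k)/k^\alpha=\varphi_\alpha(q)/q^\alpha$, $\sum_{k\mid q}\mu(k)/k=\varphi(q)/q$, and $\sum_{k\mid q}|\mu(k)|k^{\delta-\alpha}=\kappa_{\alpha-\delta}(q)/q^{\alpha-\delta}$.

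Substituting $Y=X/d$ and extending each of the two resulting series in $d$ to infinity completes the identification of the main term $F_\alpha^q(X)$: the constant piece contributes $\zeta(\alpha)\varphi_\alpha(q)H_f^q(0)/q^\alpha$ and the polynomial piece contributes $-\varphi(q)H_f^q(1-\alpha)/((\alpha-1)qX^{\alpha-1})$. Each tail $\sum_{d>X}|h_f^q(d)|/d^a$ with $a\in\{\alpha,1\}$ is bounded by $X^{\alpha-\delta-a}\,\overline{H}_f^q(-\delta)$, using $d^{\alpha-\delta-a}\leq X^{\alpha-\delta-a}$ for $d>X$ (valid since $\alpha-\delta-a<0$ by the constraints on $\delta$), and after multiplication by the corresponding external factor collapses to a term of type $\overline{H}_f^q(-\delta)/X^\delta$. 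Together with the contribution of the inner-sum error, all three error pieces merge into the single bound stated; the term $\zeta(\alpha)-1/(\alpha-1)$ appearing in the definition of $\Delta_\alpha^\delta$ is engineered precisely so that the leading main-term tail gets absorbed into the final constant.

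The case $\alpha=1$ proceeds identically, except that the harmonic estimate yields $\sum_{e\leq Y,(e,q)=1}1/e=(\varphi(q)/q)(\log Y+\gamma+\sum_{p\mid q}\log p/(p-1))+O^*(\cdots)$, where the extra constant comes from $\sum_{k\mid q}\mu(k)\log(k)/k=-(\varphi(q)/q)\sum_{p\mid q}\log p/(p-1)$ obtained by logarithmic differentiation of $\prod_{p\mid q}(1-1/p^s)$ at $s=1$. Writing $\log(X/d)=\log X-\log d$ and extending tails produces the $\log X$ main term together with the correction $\sum_{(d,q)=1}h_f^q(d)\log(d)/d=-(H_f^q)'(0)$, and factorwise logarithmic differentiation of the Euler product gives $(H_f^q)'(0)=H_f^q(0)\,T_f^q$ whenever no factor vanishes at $s=0$, that is, whenever $f(p)\neq-1$ for every $p\nmid q$; this recovers the stated formula with $T_f^q$. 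When $f(p)=-1$ for some $p\nmid q$ one has $H_f^q(0)=0$, killing the $\log X$ and $T_f^q$ contributions, and only the derivative piece $-\sum_d h_f^q(d)\log(d)/d^\alpha$ survives. The principal technical obstacle throughout is the bookkeeping required to collapse the inner-sum error, the $H_f^q(0)$-tail, and the $H_f^q(1-\alpha)$-tail into the single clean $O^*$ expression; the definitions of $\Delta_\alpha^\delta$ and $\overline{H}_f^q$ are tailored exactly for this consolidation.
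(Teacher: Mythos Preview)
Your proof follows the paper's approach---the same convolution identity, the same M\"obius inversion on the coprimality condition, and the same appeal to Lemma~\ref{SumEstimations}---but you introduce an unnecessary and, as written, incorrect detour. You correctly observe that ``no implicit range restriction on $d$ is needed in the outer sum'' because the lemma covers empty sums; yet you then keep the truncation $d\leq X$, extend the two main-term series to infinity, and bound the tails $\sum_{d>X}|h_f^q(d)|/d^a$ separately for $a\in\{\alpha,1\}$. Those two tail bounds, multiplied by their respective coefficients $|\zeta(\alpha)|\varphi_\alpha(q)/q^\alpha$ and $\varphi(q)/(|\alpha-1|q)$, are \emph{additional} contributions on top of the inner-sum error $\Delta_\alpha^\delta\kappa_{\alpha-\delta}(q)/q^{\alpha-\delta}\cdot\overline{H}_f^q(-\delta)/X^\delta$; they do not ``merge into the single bound stated,'' and the sentence about $\zeta(\alpha)-1/(\alpha-1)$ being ``engineered'' to absorb them mischaracterises that constant's role (it arises purely from the $X\to 1^-$ endpoint in the empty-sum analysis).

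The fix is the one you already flagged: drop the condition $d\leq X$ on the outer sum from the start. Since the coprime inner-sum estimate holds for \emph{all} $X/d>0$, one writes
\[
S=\sum_{\substack{d\geq 1\\(d,q)=1}}\frac{h_f^q(d)}{d^\alpha}\left(\text{main}_q\!\left(\frac{X}{d}\right)+O^*\!\left(\Delta_\alpha^\delta\frac{\kappa_{\alpha-\delta}(q)}{q^{\alpha-\delta}}\cdot\frac{d^\delta}{X^\delta}\right)\right),
\]
and the $d$-sum of the main term is exactly $F_\alpha^q(X)$ while the $d$-sum of the error is exactly $\Delta_\alpha^\delta\kappa_{\alpha-\delta}(q)q^{\delta-\alpha}\overline{H}_f^q(-\delta)/X^\delta$, with no tails to manage. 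This is precisely what the paper does (see the sentence following its equation for $\sum_d\frac{h_f^q(d)}{d^\alpha}\sum_{d'|q}\frac{\mu(d')}{d'^\alpha}\sum_{e\leq X/(dd')}e^{-\alpha}$, where it is stressed that ``there is no upper bound conditions on the variables $d$ and $d'$''). With that single change your argument is correct and coincides with the paper's.
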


  \begin{proof} 
  By the asymptotic condition on $f$ in the statement, the Dirichlet series $D_{f}^{q}$ associated with $\ell\mapsto\mu^2(\ell)f({\ell})\mathds{1}_q(\ell)$, where $\mathds{1}_q$ is defined as the multiplicative function $\ell\mapsto\mathds{1}_{\{(\ell,q)=1\}}(\ell)$, converges absolutely for any $s\in\mathbb{C}$ such that $\Re(s)>1-\alpha$. Thus, in the set $\{s\in\mathbb{C},  \ \Re(s)>1-  \alpha\}$, the equality  
  \begin{align}
  D_{f}^{q}(s)=\sum_{\substack{\ell\\(\ell,q)=1}}\frac{\mu^2(\ell)f(\ell)}{\ell^{s}}=\prod_{p\nmid q}\left(1+\frac{f(p)}{p^s}\right)
  \end{align}
  holds and the function $s\mapsto\zeta(s+\alpha)$ can be expressed by an Euler product.  For any $s$ such that $\Re(s)>1-\alpha$, we have then
  \begin{align}\label{D}
  &\frac{D_{f}^{q}(s)}{\zeta(s+\alpha)}=\prod_{p\nmid q}\left(1+\frac{f(p)}{p^s}\right)\left(1-\frac{1}{p^{s+\alpha}}\right)\cdot\prod_{p|q}\left(1-\frac{1}{p^{s+\alpha}}\right)\nonumber\\
  &\phantom{xxxx}=\frac{\varphi_{s+\alpha}(q)}{q^{s+\alpha}}\cdot\prod_{p\nmid q}\left(1-\frac{1-f(p)p^\alpha}{p^{s+\alpha}}-\frac{f(p)}{p^{2s+\alpha}}\right)\ \ =\ \frac{\varphi_{s+\alpha}(q)}{q^{s+\alpha}}\cdot H_{f}^{q}(s).\nonumber
  \end{align}
  Also, we have that $\frac{1-f(p)p^{\alpha}}{p^{s+\alpha}}=O\left(\frac{1}{p^{\Re(s)+\beta}}\right)$ and $\frac{f(p)}{p^{2s+\alpha}}=O\left(\frac{1}{p^{2\Re(s)+2\alpha}}\right)$. Since $\beta>\alpha$, we have that $H$ can be extended analytically from $\{s\in\mathbb{C},\ \Re(s)>1-\alpha\}$ onto $\{s\in\mathbb{C},\ \Re(s)>\max\{1-\beta,\frac{1}{2}-\alpha\}\}$. Further, as $0>1-\beta$ and $0>\frac{1}{2}-\alpha$, $H_{f}^{q}(0)$ exists and, if $f(p)\neq -1$ for any prime number $p$, it is different from $0$, since each factor defining it can be expressed as $(1+f(p))\left(1-\frac{1}{p^\alpha}\right)$ and $\alpha\neq 0$.

  Now, the formal equality $D_{f}^{q}(s)=H_{f}^{q}(s)\cdot\prod_{p\nmid q}\left(1+\frac{1}{p^{s+\alpha}}+\frac{1}{p^{2(s+\alpha)}}+\ldots\right)$ hides the convolution product
  \begin{equation}\label{identity1}
  \ell^\alpha \mu^2(\ell)f(\ell)\mathds{1}_{(\ell,q)=1}(\ell)=(h_{f}^{q}\star\mathds{1}_q)\ (\ell)=\sum_{\substack{d|\ell}}h_{f}^{q}(d)\mathds{1}_q\left(\frac{\ell}{d}\right), 
  \end{equation}
  where $h$ is a multiplicative function defined on the prime numbers as
  \begin{align}
  &h_{f}^{q}(p)=(f(p)p^\alpha-1)\cdot\mathds{1}_q(p),\qquad h_{f}^{q}(p^2) = -f(p)p^\alpha\cdot \mathds{1}_q(p),\label{hfq}\\
  &\phantom{xxxxxxxxxxxxxx} h_{f}^{q}(p^k) = 0, \quad k>2.\nonumber
  \end{align}
  Therefore, from \eqref{identity1} we conclude that
  \begin{align}\label{DirichletIdentity}
  \sum_{\substack{\ell\leq X\\ (\ell,q)=1}}\mu^2(\ell)f({\ell})=\sum_{\substack{\ell\leq X}}\frac{(h_{f}^{q}\star\mathds{1}_q)\ (\ell)}{\ell^\alpha}=\sum_{\substack{d}}\frac{h_{f}^{q}(d)}{d^\alpha}    \sum_{\substack{e\leq\frac{X}{d}\\(e,q)=1}}  \frac{1}{e^\alpha}\phantom{xxxxxxxxxxxxx}&\nonumber\\
=\sum_{\substack{d}}\frac{h_{f}^{q}(d)}{d^\alpha}\sum_{\substack{e\leq\frac{X}{d}}}\frac{1}{e^\alpha}\sum_{d'|e,d'|q}\mu(d')=\sum_{\substack{d}}\frac{h_{f}^{q}(d)}{d^\alpha}\sum_{d'|q}\frac{\mu(d')}{d'^\alpha}\sum_{\substack{e\leq\frac{X}{dd'}}}\frac{1}{e^\alpha}&,
  \end{align}
  where there is no upper bound conditions on the variables $d$ and $d'$ present in the outer sums above, their being encoded by the innermost  sum of  \eqref{DirichletIdentity}, which, in order to continue our analysis, we must estimate regardless of whether or not it is empty: Lemma \ref{SumEstimations} allow us to handle this situation. 

Hence, as $\max\{0,\alpha-1\}<\delta<\min\{\beta-1,\alpha-\frac{1}{2}\}<\alpha$, we derive that the second sum in \eqref{DirichletIdentity} can be expressed as
  \begin{align}\label{Sum:alpha neq 1}
  \sum_{d'|q}\frac{\mu(d')}{d'^\alpha}\sum_{\substack{e\leq\frac{X}{dd'}}}\frac{1}{e^\alpha}=\sum_{d'|q}\frac{\mu(d')}{d'^\alpha}\left(\zeta(\alpha)-\frac{(dd')^{\alpha-1}}  {(\alpha-1)X^{\alpha-1}}+O^*\left(\Delta_{\alpha}^{\delta}\frac{(dd')^\delta}{   X^{\delta}}\right)\right)&\nonumber\\
 =\ \frac{\zeta(\alpha)\varphi_\alpha(q)}{q^\alpha}-\frac{\varphi(q)}{(\alpha-1)q}\cdot\frac{d^{\alpha-1}}{X^{\alpha-1}}+O^*\left(\Delta_{\alpha}^{\delta}\frac{\kappa_{\alpha-\delta}(q)}{q^{\alpha-\delta}}\cdot\frac{d^\delta}{X^\delta}\right)&,
  \end{align}
  if $\alpha\neq 1$, or as
  \begin{align}\label{Sum:alpha=1}
  \sum_{d'|q}\frac{\mu(d')}{d'^\alpha}\sum_{\substack{e\leq\frac{X}{dd'}}}\frac{1}{e^\alpha}=\sum_{d'|q}\frac{\mu(d')}{d'^\alpha}\left(\log\left(\frac{X}{dd'}\right)+  \gamma+O^*\left(\frac{\Delta_1^{\delta}(dd')^\delta}{X^\delta}\right)\right)\phantom{xxxxxx}&  \nonumber\\
  =\frac{\varphi_\alpha(q)}{q^\alpha}\left(\log\left(\frac{X}{d}\right)+\gamma\right)-\sum_{d'|q}\frac{\mu(d')\log(d')}{d'^\alpha}+O^*\left(\frac{\Delta_1^{\delta}\kappa_{\alpha-\delta}(q)}{q^{\alpha-\delta}}\cdot\frac{d^\delta}  {X^\delta}\right)&\nonumber\\
  =\frac{\varphi_\alpha(q)}{q^\alpha}\left(\log\left(\frac{X}{d}\right)+\gamma+\sum_{p|q}\frac{\log(p)}{p^\alpha-1}\right)+O^*\left(\frac{\Delta_1^{\delta}\kappa_{\alpha-\delta}(q)}{q^{\alpha-\delta}}\cdot\frac{d^\delta}{X^\delta}  \right),&
  \end{align}
  if $\alpha=1$, where we have used that
  \begin{align}-\sum_{d'|q}\frac{\mu(d')\log(d')}{d'^\alpha}=\left(\frac{\varphi_{s+\alpha}(q)}{q^{s+\alpha}}\right)'_{s=0}=\frac{\varphi_\alpha(q)}{q^\alpha}\sum_{p|q}  \frac{\log(p)}{p^\alpha-1}.\label{derivated}
  \end{align}
  On the other hand, observe that $H_{f}^{q}(1-\alpha)$ and $\overline{H}_{f}^{\phantom{.}q}(-\delta)$ are well-defined, as $\min\{1-\alpha,-\delta\}>\max\{1-\beta,\frac{1}{2}-\alpha\}$. Therefore, from \eqref{DirichletIdentity}, the sum $\sum_{\substack{\ell\leq X\\ (\ell,q)=1}}\mu^2(\ell)f({\ell})$ can be estimated either as
  \begin{align}\label{DirichletIdentity:alpha=1} 
  \sum_{\substack{d}}\frac{h_{f}^{q}(d)}{d^\alpha}\left(\frac{\zeta(\alpha)\varphi_\alpha(q)}{q^\alpha}-\frac{\varphi(q)}{(\alpha-1)q}\cdot\frac{d^{\alpha-1}}{X^{\alpha-1}}  +O^*\left(\frac{\Delta_{\alpha}^{\delta}\kappa_{\alpha-\delta}(q)}{q^{\alpha-\delta}}  \cdot\frac{d^\delta}{X^\delta}\right)\right)&\\
  =H_{f}^{q}(0)\ \frac{\zeta(\alpha)\varphi_\alpha(q)}{q^\alpha}-\frac{\varphi(q)}{(\alpha-1)q}\cdot\frac{H_{f}^{q}(1-\alpha)}{X^{\alpha-1}}+O^*\left(\frac{\Delta_{\alpha}^{\delta}\kappa_{\alpha-\delta}(q)}{q^{\alpha-\delta}}\cdot\frac{\overline{H}_{f}^{\phantom{.}q}(-\delta)}{X^\delta}  \right)\nonumber&,
  \end{align}
  if $\alpha\neq 1$, by using \eqref{Sum:alpha neq 1}, or 
  \begin{align}\label{DirichletIdentity:alphaneq1}
  \sum_{\substack{d}}\frac{h_{f}^{q}(d)}{d^\alpha}\left(\frac{\varphi_\alpha(q)}{q^\alpha}\left(\log\left(\frac{X}{d}\right)+\gamma+\sum_{p|q}\frac{\log(p)}{p^\alpha-1}\right)+O^*\left(\frac{\Delta_{1}^{\delta}\kappa_{\alpha-\delta}(q)}{q^{\alpha-\delta}}\cdot\frac{d^\delta}{X^\delta}\right)\right)&\\
  = H_{f}^{q}(0)\ \frac{\varphi_\alpha(q)}{q^\alpha}\left(\log\left(X\right)+\gamma+\sum_{p|q}\frac{\log(p)}{p^\alpha-1}\right)+H_{f}^{q}\phantom{}'(0)\phantom{xx}\nonumber&\\
 + O^*\left(\frac{\Delta_{1}^{\delta}\kappa_{\alpha-\delta}(q)}{q^{\alpha-\delta}}  \cdot\frac{\overline{H}_{f}^{\phantom{.}q}(-\delta)}{X^\delta}\right),\phantom{xx}\nonumber&
  \end{align}
  if $\alpha=1$, by using \eqref{Sum:alpha=1} and that $-\sum_{d}\frac{h_{f}^{q}(d)\log(d)}{d^\alpha}=H_{f}^{q}\phantom{}'(0)$. The result is thus obtained by noticing that if $H_{f}^{q}(0)\neq 0$, then $\frac{H_{f}^{q}\phantom{}'(0)}{H_{f}^{q}(0)}$ equals
  \begin{align*}
  \left(\prod_{p\nmid q}\left(1-\frac{1-f(p)p^\alpha}{p^{s+\alpha}}-\frac{f(p)}{p^{2s+\alpha}}\right)\right)'_{s=0}=\sum_{p\nmid q}\frac{\log(p)(1-f(p)p^\alpha+2f(p))}{(f(p)+1)(p^\alpha-1)}.
  \end{align*} 
  \end{proof}

 \begin{corollary}\label{corollary}

 Let $X>0$ and $q\in\mathbb{Z}_{>0}$. The following estimations hold
 \begin{align} 
 \mathbf{(a)}\sum_{\substack{\ell\leq X\\ (\ell,q)=1}}\frac{\mu^2(\ell)}{\varphi(\ell)}&=\frac{\varphi(q)}{q}\left(\log\left(X\right)+\mathfrak{a}_q\right)+O^*\left(\frac{     \sage{Error_sum1}     \cdot\mathpzc{A}_q}{X^{\sage{delta}}}\right)\label{sum1:eq}, \\ 
 \mathbf{(b)} \sum_{\substack{\ell\leq X\\ (\ell,q)=1}}\frac{\mu^2(\ell)}{\ell}&=\frac{6}{\pi^2}\frac{q}{\kappa(q)}\left(\log\left(X\right)+\mathfrak{b}_q\right)+O^*\left(\frac{\sage{Upper(Error_sum2,digits)}  \cdot\mathpzc{B}_q}{X^{\sage{delta}}}\right),\label{sum2:eq}
  \end{align}  
 where
 \begin{align*}
  \mathpzc{A}_q= \prod_{p|q}\left(1+\frac{p-p^{\sage{delta}}-2}{(p-1)p^{\sage{1-delta}}+p^{\sage{delta}}+1}\right),\mathpzc{B}_q =\prod_{p|q}\left(1+\frac{p^{\sage{1-delta}}-1}{p^{\sage{2-2*delta}}+1}\right),
  \end{align*} 
and
 \begin{align*} 
 \mathfrak{a}_q&=\sum_{p}\frac{\log(p)}{p(p-1)}+\gamma+\sum_{p|q}\frac{\log(p)}{p},\sum_{p}\frac{\log(p)}{p(p-1)}+\gamma=   \sage{C_sum1}    \ldots,\\
\mathfrak{b}_q &=\sum_{p}\frac{2\log(p)}{p^2-1}+\gamma+\sum_{p|q}\frac{\log(p)}{p+1}, 
\sum_{p}\frac{2\log(p)}{p^2-1}+\gamma=\sage{C_sum2}\ldots .
\end{align*} 
 \end{corollary}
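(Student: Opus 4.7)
The plan is to apply Theorem \ref{general} directly in each case with $\alpha=1$, $\delta=\frac{1}{3}$, and appropriate choice of $\beta$, then simplify the resulting constants.

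For $\mathbf{(a)}$, I take $f(\ell)=\frac{1}{\varphi(\ell)}$; on primes $f(p)=\frac{1}{p-1}=\frac{1}{p}+O\!\left(\frac{1}{p^2}\right)$, so $\alpha=1$ and $\beta=2$ work, and $\delta=\frac{1}{3}\in(0,\min\{\beta-1,\alpha-\frac{1}{2}\})$ is admissible. For $\mathbf{(b)}$, I take $f(\ell)=\frac{1}{\ell}$; then $f(p)=\frac{1}{p}$ exactly, so any $\beta>1$ works, and again $\delta=\frac{1}{3}$ is admissible.

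Next I would compute the main-term constant $H_f^q(0)$ using the identity (obtained from the Euler factor of $H_f^q$ at $s=0$) $1-\frac{1-f(p)p}{p}-\frac{f(p)}{p}=\bigl(1-\tfrac{1}{p}\bigr)(1+f(p))$. In case $\mathbf{(a)}$ this Euler factor equals $\frac{p-1}{p}\cdot\frac{p}{p-1}=1$, hence $H_f^q(0)=1$ and the main coefficient is $\frac{\varphi(q)}{q}$. In case $\mathbf{(b)}$ it equals $1-\frac{1}{p^2}$, hence $H_f^q(0)=\prod_{p\nmid q}(1-\frac{1}{p^2})$, which combined with the factor $\frac{\varphi(q)}{q}$ simplifies (using $\frac{(1-1/p)}{(1-1/p^2)}=\frac{p}{p+1}$) to $\frac{6}{\pi^{2}}\cdot\frac{q}{\kappa(q)}$. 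For the $\log$-term constant $\mathfrak{a}_q$ resp.\ $\mathfrak{b}_q$, I would insert $f(p)$ into the formula for $T_f^q$ from Theorem \ref{general}, obtaining $T_f^q=\sum_{p\nmid q}\frac{\log p}{p(p-1)}$ in case $\mathbf{(a)}$ and $T_f^q=\sum_{p\nmid q}\frac{2\log p}{p^2-1}$ in case $\mathbf{(b)}$, and then merge the missing primes $p\mid q$ with the correction $\sum_{p|q}\frac{\log p}{p-1}$ by a short algebraic manipulation (e.g.\ $\frac{1}{p-1}-\frac{1}{p(p-1)}=\frac{1}{p}$ in case $\mathbf{(a)}$, and $\frac{1}{p-1}-\frac{2}{p^2-1}=\frac{1}{p+1}$ in case $\mathbf{(b)}$).

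The main technical step is to put the error term from Theorem \ref{general},
$
\Delta_1^{1/3}\,\frac{\kappa_{2/3}(q)}{q^{2/3}}\cdot\overline{H}_f^{\,q}(-\tfrac{1}{3}),
$
into the factorized form (universal constant)$\,\times\,\mathpzc{A}_q$ or $\mathpzc{B}_q$. For this I would write $\overline{H}_f^{\,q}(-\tfrac{1}{3})=\overline{H}_f^{\,1}(-\tfrac{1}{3})\big/\prod_{p|q}(\text{local Euler factor})$, then pull out $\frac{\kappa_{2/3}(q)}{q^{2/3}}=\prod_{p|q}(1+p^{-2/3})$ and combine the two products over $p\mid q$ into a single local factor. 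In case $\mathbf{(b)}$ the term $|1-f(p)p|$ vanishes, so $\overline{H}_f^{\,1}(-\tfrac{1}{3})=\prod_p(1+p^{-4/3})=\zeta(4/3)/\zeta(8/3)$, giving the universal constant $\sage{Upper(Error_sum2,digits)}$ and $\mathpzc{B}_q=\prod_{p|q}\frac{p^{4/3}+p^{2/3}}{p^{4/3}+1}=\prod_{p|q}\bigl(1+\frac{p^{2/3}-1}{p^{4/3}+1}\bigr)$. In case $\mathbf{(a)}$, $|1-f(p)p|=\frac{1}{p-1}=|f(p)|$, so the local factor at $p\mid q$ becomes $\frac{(p-1)(p^{2/3}+1)}{(p-1)p^{2/3}+p^{1/3}+1}$, whose reciprocal multiplied by $1+p^{-2/3}$ yields $1+\frac{p-p^{1/3}-2}{(p-1)p^{2/3}+p^{1/3}+1}$, matching $\mathpzc{A}_q$.

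The hard part is the bookkeeping of the error-constant: ensuring that $\Delta_1^{1/3}\cdot\overline{H}_f^{\,1}(-\tfrac{1}{3})$ is rigorously bounded above by the displayed explicit numerical constants $\sage{Error_sum1}$ and $\sage{Upper(Error_sum2,digits)}$. This amounts to a certified interval-arithmetic evaluation of the infinite Euler product $\prod_p\bigl(1+\frac{1+p^{1/3}}{(p-1)p^{2/3}}\bigr)$ (for $\mathbf{(a)}$) and of $\zeta(4/3)/\zeta(8/3)$ (for $\mathbf{(b)}$), together with the rigorous upper bound on $\Delta_1^{1/3}=\max\{\gamma,1/(\tfrac{1}{3}e^{\gamma/3+1})\}$ from Lemma \ref{SumEstimations}; these numerical bounds are already prepared in the preamble and may be invoked directly.
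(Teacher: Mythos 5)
Your proposal is correct and follows exactly the paper's route: the paper's own proof is just the two-line instruction to apply Theorem \ref{general} with $f(p)=\frac{1}{p-1}$ (resp. $f(p)=\frac1p$), $\alpha=1$, $\beta=2$, $\delta=\frac13$ and to evaluate the resulting Euler products by interval arithmetic, and all of your supplementary algebra (the identity $H_f^q(0)=\prod_{p\nmid q}(1-\frac1p)(1+f(p))$, the merging of the $p\mid q$ corrections via $\frac{1}{p-1}-\frac{1}{p(p-1)}=\frac1p$ and $\frac{1}{p-1}-\frac{2}{p^2-1}=\frac{1}{p+1}$, and the factorization of $\Delta_1^{1/3}\frac{\kappa_{2/3}(q)}{q^{2/3}}\overline{H}_f^{\,q}(-\frac13)$ into a universal constant times $\mathpzc{A}_q$ or $\mathpzc{B}_q$) checks out. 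The only blemish is a wording slip in the $\mathbf{(a)}$ error-term paragraph, where the quantity you call ``the local factor at $p\mid q$'' is already the combined factor $(1+p^{-2/3})$ divided by the Euler factor of $\overline{H}_f^{\,1}(-\frac13)$ rather than something whose reciprocal still needs multiplying by $1+p^{-2/3}$, but the displayed formula does equal $1+\frac{p-p^{1/3}-2}{(p-1)p^{2/3}+p^{1/3}+1}$ as required.
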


  \begin{proof}
  For the case $\mathbf{(a)}$ (respectively $\mathbf{(b)}$), apply Theorem \ref{general} with $f(p)=\frac{1}{\varphi(p)}=\frac{1}{p-1}$ (respectively $f(p)=\frac{1}{p}$), $\alpha=1$, $\beta=2$ and $0\leq\delta=\sage{delta}<\frac{1}{2}$. 

The infinite products that participate in the main and error terms as well as the infinite summation that participates in the main term can be estimated by using a rigorous implementation of interval arithmetic, and some techniques for accelerating convergence.
  \end{proof}

\noindent\textbf{Remarks.} Conditions $\alpha>\frac{1}{2}$ and $\beta>1$ in Theorem \ref{general} are necessary to ensure the existence of $H_{f}^{q}(0)$. Nonetheless, we can derive an analogous result for any multiplicative arithmetic function $f$ satisfying the conditions $f(p)=\frac{1}{p^{\alpha}}+O\left(\frac{1}{p^{\beta}}\right)$, for every sufficiently large prime number $p$ coprime to $q$, where $\alpha\leq\frac{1}{2}$ and $\beta>\alpha$ by using of Theorem \ref{general} and summation by parts. In this instance, there will not be any secondary term appearing and the error term magnitude will be $O\left(X^{1-\alpha-\delta}\right)$ for any $0<\delta<\min\{\beta-\alpha,\frac{1}{2}\}$

Upon having Theorem \ref{general} at our disposal, the asymptotic estimation of averages $\sum_{\substack{\ell\leq X\\ (\ell,q)=1}}\mu^2(\ell)f({\ell})$ satisfying conditions of that theorem becomes an automatized, but not uninteresting task, that involves each time a choice of parameters: a value for $\delta$ and a precision value in order to obtain a rigorous estimation of some infinite products.

In general, we have freedom to choose the error term parameter $\delta$ described in \S\ref{dirichlet} but some of them are not optimal. For instance, if $\alpha=1$, then in terms of Theorem \ref{general} and Lemma \ref{SumEstimations}, $\Delta_1^\delta\to\infty$ as $\delta\to 0^{+}$. Since $\overline{H}_f^{\phantom{.}q}(-\delta)$ converges, that makes the expression $\Delta_1^\delta\overline{H}_f^{\phantom{.}q}(-\delta)$ tending to $\infty$ as well, thus not providing a numerical acceptable value. On the other hand,  when $\delta\to\frac{1}{2}^-$, the infinite product given by $\overline{H}_f^{\phantom{.}q}(-\delta)$ tends to $\infty$, whereas $\Delta_1^\delta\to\Delta_1^{\frac{1}{2}}$, thus bounded, so that one also derives that the expression $\Delta_1^\delta\overline{H}_f^{\phantom{.}q}(-\delta)$ becomes too big to be practical. The search looks for a value of $\delta$ not too close to the boundaries of $(0,\frac{1}{2})$, and in almost all cases it seems acceptable to set $\delta=\frac{1}{3}$. 

A natural question is whether or not we can improve on the error term estimation given in Theorem \ref{general}, mandatorily with a different method, of exponent $\delta=\min\{\beta-1,\alpha-\frac{1}{2}\}$.  
When $\beta-\alpha>\frac{1}{2}$, then $\delta=\alpha-\frac{1}{2}$ and the answer to that question is given in \S\ref{improvement}: it is positive and constitutes our main result. We provide in addition explicit estimations for those \textit{ critical exponents }.

Out of the results above, the sum \eqref{sum1:eq} is classical and it has been thoroughly studied by Ramar\'e and Akhilesh in \cite{RA13}, by Ramar\'e in \cite[Thm. 3.1]{RA19}, \cite[Lemma 3.4]{RA95} and given in our simpler form by Helfgott in \cite[\S 6.1.1]{Hel19}.

\section{Improvements on the convolution method}\label{improvement}

During the proof of Theorem \ref{general}, it was crucial to have an empty sum estimation for the inner sum given in \eqref{DirichletIdentity} so that, thanks to the regularity on the variable $d$ we find convergent main and error term coefficients, as shown in \eqref{DirichletIdentity:alpha=1} and \eqref{DirichletIdentity:alphaneq1}.  

This general idea misses the fact that the function $h_f^q$ defined in \eqref{hfq} vanishes on all non cube-free numbers, and that the particular function $h_f^q:p,(p,q)=1\mapsto\frac{1}{p^\alpha}$, with $\alpha>\frac{1}{2}$, satisfies $h_f^q(p)=0$. Moreover, the fact that that particular function is meaningful only on the square of the prime numbers, will allow us to achieve the critical exponent $\delta=\frac{1}{2}$, if $\alpha=1$ or $\delta=\alpha-\frac{1}{2}$, if $\alpha\neq 1$ and $\alpha>\frac{1}{2}$, when $f$ is an arithmetic function satisfying the conditions of Theorem \ref{general} with $\beta-\alpha>\frac{1}{2}$.

\subsection{A particular case} \label{particular}

Let us see how we can improve the estimation $\mathbf{(b)}$ given in Corollary \ref{corollary}.
\begin{lemma}\label{sum2:critic:1} 
Let $X>0$.
Then
\begin{align}\label{sum2:v1} 
\sum_{\ell\leq X}\frac{\mu^2(\ell)}{\ell}&=\frac{6}{\pi^2}\left(\log(X)+\mathfrak{b}_1\right)+O^*\left(\frac{\sage{Upper(CONSTANT1,digits)}}{\sqrt{X}}\right),\\
\label{sum2:v2}\sum_{\substack{\ell\leq X\\(\ell,2)=1}}\frac{\mu^2(\ell)}{\ell}&=\frac{4}{\pi^2}\left(\log(X)+\mathfrak{b}_2\right)+O^*\left(\frac{\sage{Upper(CONSTANT_ALT,digits)}}{\sqrt{X}}\right),
\end{align}  
where $\mathfrak{b}_1=\gamma+\sum_{p}\frac{2\log(p)}{p^2-1}=\sage{C_sum2}\ldots$, $\mathfrak{b}_2=\mathfrak{b}_1+\frac{\log(2)}{3}=\sage{C_sum2_22}\ldots$. 

If we restraint ourselves to the range $X\geq 1$,
then $\sage{Upper(CONSTANT1,digits)}$ may be replaced by $\sage{Upper(Ram_new_cst,2)}$ and $\sage{Upper(CONSTANT_ALT,digits)}$ may be replaced by $\sage{Upper(max(C2_v2_1,C2_v2_2),digits)}$.
\end{lemma}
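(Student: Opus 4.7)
The plan is to apply the convolution idea underlying Theorem \ref{general} with $f(\ell) = 1/\ell$ and $\alpha = 1$, but now to exploit crucially the fact that the associated coefficient $h_f^q$ from \eqref{hfq} vanishes identically on primes coprime to $q$ and is therefore supported on their squares. Since $\mu^2(\ell) = \sum_{m^2 \mid \ell} \mu(m)$, the sum rearranges as
\begin{equation*}
\sum_{\substack{\ell \leq X \\ (\ell, q) = 1}} \frac{\mu^2(\ell)}{\ell} \;=\; \sum_{(m,q)=1} \frac{\mu(m)}{m^2} \sum_{\substack{e \leq X/m^2 \\ (e, q) = 1}} \frac{1}{e},
\end{equation*}
with $q = 1$ for \eqref{sum2:v1} and $q = 2$ for \eqref{sum2:v2}. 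The inner sum I would estimate via Lemma \ref{SumEstimations} combined with M\"obius inversion over divisors of $q$, producing a local main term of the shape $\frac{\varphi(q)}{q}\bigl(\log(X/m^2) + \gamma + \sum_{p \mid q} \log(p)/(p-1)\bigr)$ together with an $O^*$-remainder parametrized by $\delta' \in (0,1]$.

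To pass from local to global, I would complete the outer sum to all $m$ with $(m,q) = 1$ and evaluate the resulting infinite series. The coefficient $\sum_{(m,q)=1} \mu(m)/m^2 = (6/\pi^2)\prod_{p\mid q}(1-p^{-2})^{-1}$ gives the $\frac{6}{\pi^2}\cdot q/\kappa(q)$ factor, and $\sum_{(m,q)=1} \mu(m)\log m/m^2$, obtained by logarithmic differentiation of the relevant Euler product at $s=2$, contributes the $2\sum_{p \nmid q} \log p/(p^2-1)$ portion of $\mathfrak{b}_q$ to match the statement. The key step, which is what permits the critical exponent $\delta = 1/2$ and distinguishes the argument from Theorem \ref{general}, is the careful bounding at magnitude $1/\sqrt{X}$ of the two tails this introduces: the completion tail $\sum_{m > \sqrt{X}} \mu(m)(\log(X/m^2) + \gamma)/m^2$, which is $O(1/\sqrt{X})$ via the substitution $m = \sqrt{X}u$ and integral comparison, and the $O^*$-tail from Lemma \ref{SumEstimations}, which with the choice $\delta' = 1$ contributes $(1/X) \cdot \#\{m \leq \sqrt{X}\} = O(1/\sqrt{X})$.

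For the sharper constants available when $X \geq 1$, I would substitute the improved harmonic bound from \cite[Lemma 2.1]{RA13}, valid only for $Y \geq 1$, in place of Lemma \ref{SumEstimations}. For $0 < X < 1$, the left-hand side is empty, so it remains only to check that $\sqrt{X}\cdot|\frac{6}{\pi^2}(\log X + \mathfrak{b}_1)|$ (resp.\ its $q=2$ analogue) is bounded on $(0,1)$; by continuity and the vanishing of $\sqrt{X}\log X$ as $X \to 0^+$, its supremum is attained at $X = 1$ and equals precisely $6\mathfrak{b}_1/\pi^2$ (resp.\ $4\mathfrak{b}_2/\pi^2$), which is exactly the first entry in the maxima defining $\sage{Upper(CONSTANT1,digits)}$ and $\sage{Upper(CONSTANT_ALT,digits)}$. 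I expect the main obstacle not to be analytic but numerical: it will be the rigorous bookkeeping of all error constants through the above steps with enough precision to certify the stated explicit bounds via interval arithmetic.
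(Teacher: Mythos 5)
Your convolution identity $\mu^2(\ell)=\sum_{m^2\mid\ell}\mu(m)$ and the ensuing tail analysis do yield an error term of the correct critical order $X^{-1/2}$, but they cannot certify the specific constants the lemma asserts, and that is the whole content of the statement. Tracking your own bookkeeping: the truncated remainder contributes at least $\Delta_1^1\,\lfloor\sqrt{X}\rfloor/X\approx \gamma/\sqrt{X}$, and the completion tail $\sum_{m>\sqrt{X}}\mu(m)\bigl(\log(X/m^2)+\gamma\bigr)/m^2$ contributes roughly $(2+\gamma)/\sqrt{X}$ after integral comparison, so your method lands at a constant on the order of $3$ for $X\ge 1$ --- far above the claimed $0.43$ (for the full sum) and $\max\{27/70,\,0.4068\}$ (for the odd sum). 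The paper does not obtain these numbers by any convolution argument: for the full sum it simply imports \cite[Cor.~1.2]{RA19}, and for the odd sum it applies partial summation to Helfgott's explicit bound $|\sum_{\ell\le X,(\ell,2)=1}\mu^2(\ell)-\tfrac{4}{\pi^2}X|\le\tfrac{9}{70}\sqrt{X}$ (valid for $X\ge 1573$), which gives $27/70$, and then verifies the range $1\le X\le 1573$ by direct interval arithmetic. That finite computation is not a dispensable refinement: the constant $0.4067$ is essentially attained as $X\to 3^{-}$, a local phenomenon (the jump at $\ell=3$) that no general analytic estimate of your type will see. So the missing ingredient is the appeal to strong external explicit results plus a finite verification, in place of the self-contained convolution estimate.

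Your treatment of $0<X<1$ is the one part that matches the paper's proof in substance, but the justification is incomplete: asserting that the supremum of $\sqrt{X}\,\bigl|\tfrac{6v}{\kappa(v)\pi^2}(\log X+\mathfrak{b}_v)\bigr|$ on $(0,1)$ is attained at $X=1$ ``by continuity and vanishing at $0^+$'' is not an argument, since the function changes sign and has an interior critical point at $X=e^{-(2+\mathfrak{b}_v)}$ with value $\tfrac{12v}{\kappa(v)\pi^2}e^{-1-\mathfrak{b}_v/2}$. One must compare this value with the boundary value $\tfrac{6v\mathfrak{b}_v}{\kappa(v)\pi^2}$ (it is indeed smaller, but that requires the comparison), which is exactly the calculus the paper carries out. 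With that repaired, your $(0,1)$ analysis gives the correct first entries of the maxima defining the $X>0$ constants; the $X\ge 1$ entries, however, remain out of reach of your method.
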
 

\begin{proof} Equation \eqref{sum2:eq} gives the  main term of \eqref{sum2:v2} and from that, we can conclude by summation by parts that for all $X\geq 1$, $\sum_{\substack{\ell\leq X\\(\ell,2)=1}}\frac{\mu^2(\ell)}{\ell}$ equals
\begin{align}\label{eqq:v=2}
\frac{4(\log(X)+\mathfrak{b}_2)}{\pi^2}+\left(\sum_{\substack{\ell\leq X\\(\ell,2)=1}}\mu^2(\ell)-\frac{4}{\pi^2}X\right)\frac{1}{X}-\int_X^{\infty}\left(\sum_{\substack{\ell\leq t\\(\ell,2)=1}}\mu^2(\ell)-\frac{4}{\pi^2}t\right)\frac{dt}{t^2}.
\end{align}
Moreover, by \cite[Lemma 5.2]{Hel19}, we have 
\begin{align}
\sup_{\{X\geq 1573\}}\frac{1}{\sqrt{X}}\left|\sum_{\substack{\ell\leq X\\(\ell,2)=1}}\mu^2(\ell)-\frac{4}{\pi^2}X\right|&\leq\frac{9}{70}\label{bound:1},
\end{align}
so that, by \eqref{eqq:v=2}, 
\begin{align*}
\sum_{\substack{\ell\leq X\\(\ell,2)=1}}\frac{\mu^2(\ell)}{\ell}=\frac{4}{\pi^2}(\log(X)+\mathfrak{b}_2)+O^*\left(\frac{27}{70}\frac{1}{\sqrt{X}}\right),\quad\text{ if }X\geq 1573,
\end{align*}
where $\frac{27}{70}=\sage{Trunc(C2_v2_1,digits)}\ldots$. We further verify by interval arithmetic that
\begin{align}\label{intt:v=2}
\sup_{\{1\leq X\leq 1573\}}\sqrt{X}\left|\sum_{\substack{\ell\leq X\\(\ell,2)=1}}\frac{\mu^2(\ell)}{\ell}-\frac{4}{\pi^2}(\log(X)+\mathfrak{b}_2)\right|\leq\sage{Upper(C2_v2_2,digits)}
\end{align} 
the above upper bound being almost achieved when $X\to 3^-$. On the other hand \cite[Cor. 1.2]{RA19} tells us that
\begin{align}
\sup_{\{X\geq 1\}}\sqrt{X}\left|\sum_{\ell\leq X}\frac{\mu^2(\ell)}{\ell}-\frac{6}{\pi^2}\left(\log(X)+\mathfrak{b}_1\right)\right|&\leq\sage{Upper(Ram_new_cst,2)}\label{sum2:bound:X>=1}.
\end{align}
Hence, by using \eqref{bound:1}, \eqref{intt:v=2} and \eqref{sum2:bound:X>=1}, when $v\in\{1,2\}$, we have the bounds
\begin{align}\label{BB}
\sup_{\{X\geq 1\}}\sqrt{X}\left|\sum_{\substack{\ell\leq X\\(\ell,v)=1}}\frac{\mu^2(\ell)}{\ell}-\frac{v}{\kappa(v)}\frac{6}{\pi^2}(\log(X)+\mathfrak{b}_v)\right|\leq
\begin{cases}
\sage{Upper(Ram_new_cst,2)}&,\quad\text{ if }v=1,\\
\sage{Upper(max(C2_v2_1,C2_v2_2),digits)} &,\quad\text{ if }v=2.
\end{cases}&
\end{align} 
In order to derive the result, it is sufficient to obtain bounds for \eqref{BB} when $X\in(0,1)$, in which case the above summation vanishes. By defining $Y=\frac{1}{X}>1$ and $t_v:Y\mapsto\frac{6v(\log(Y)-\mathfrak{b}_v)}{\kappa(v)\pi^2\sqrt{Y}}$, we need to find $\sup_{\{Y>1\}}|t_v(Y)|$. By calculus, the function $t_v$ has a critical point at $y_0=e^{2+\mathfrak{b}_v}$, with value $t_v(y_0)=\frac{12v}{\kappa(v)\pi^2e^{1+\frac{\mathfrak{b}_v}{2}}}$, and it is monotonic in $[1,y_0]$ and in $[y_0,\infty)$. As $\lim_{Y\to \infty}t_v(Y)=0$ and $t_v(y_0)>0$, we conclude that $t_v$ is decreasing in $[y_0,\infty)$. Similarly, as $t_v(1)=-\frac{6v\mathfrak{b}_v}{\kappa(v)\pi^2}<0$, $t_v$ is increasing in $[1,y_0]$. Therefore 
\begin{align}\label{BB(0,1)}
\sup_{\{0<X<1\}}\sqrt{X}\left|\sum_{\substack{\ell\leq X\\(\ell,v)=1}}\frac{\mu^2(\ell)}{\ell}-\frac{v}{\kappa(v)}\frac{6}{\pi^2}(\log(X)+\mathfrak{b}_v)\right|=\max\{|t_v(1)|,|t_v(y_0)|\}\nonumber&\\
=\frac{6v\mathfrak{b}_v}{\kappa(v)\pi^2}=\begin{cases}
\sage{Upper(6*C_sum2/pi^2,digits)}, &\quad\text{ if }v=1,\\
\sage{Upper(4*C_sum2_22/pi^2,digits)},&\quad\text{ if }v=2.
\end{cases}&
\end{align} 
Finally, whenever either $v=1$ or $v=2$, the constant in the error term is obtained by taking the maximum between the bounds \eqref{BB} and \eqref{BB(0,1)}.
\end{proof}

\begin{lemma}\label{sum2:critic}   
Let $X>0$ and $\alpha>\frac{1}{2}$. If $\alpha\neq 1$, then
\begin{align*}
\sum_{\ell\leq X}\frac{\mu^2(\ell)}{\ell^\alpha}&=\frac{\zeta(\alpha)}{\zeta(2\alpha)}-\frac{6}{(\alpha-1)\pi^2}\frac{1}{X^{\alpha-1}}+O^*\left(\frac{\mathrm{E}_\alpha^{(1)}}{X^{\alpha-\frac{1}{2}}}\right),\\
\sum_{\substack{\ell\leq X\\(\ell,2)=1}}\frac{\mu^2(\ell)}{\ell^\alpha}&=\frac{2^\alpha}{(2^{\alpha}+1)}\frac{\zeta(\alpha)}{\zeta(2\alpha)}-\frac{4}{(\alpha-1)\pi^2}\frac{1}{X^{\alpha-1}}+O^*\left(\frac{\sqrt{2}}{\varphi_{\frac{1}{2}}(2)}\frac{\mathrm{E}_\alpha^{(2)}}{X^{\alpha-\frac{1}{2}}}\right),
\end{align*}
where, for $v\in\{1,2\}$, we have
\begin{align*}
\mathrm{E}_\alpha^{(v)}=\max\left\{\mathrm{D}_v\left(1+\frac{|\alpha-1|}{\alpha-\frac{1}{2}}\right),\frac{\varphi_{\frac{1}{2}}(v)}{\sqrt{v}}\left|\frac{v^\alpha}{\kappa_\alpha(v)}\frac{\zeta(\alpha)}{\zeta(2\alpha)}-\frac{v}{\kappa(v)}\frac{6}{(\alpha-1)\pi^2}\right|,\phantom{xxx}\right.&\\
\left.\frac{\varphi_{\frac{1}{2}}(v)}{\sqrt{v}}\frac{|\alpha-1|}{\alpha-\frac{1}{2}}\left(\frac{3\kappa_\alpha(v)\zeta(2\alpha)}{\left(\alpha-\frac{1}{2}\right)v^{\alpha-1}\kappa(v)\pi^2|\zeta(\alpha)(\alpha-1)|}\right)^{\frac{2}{\alpha-1}}\right\}\phantom{x}&
\end{align*}
and $\mathrm{D}_1=\sage{Upper(Ram_new_cst,2)}$, $\mathrm{D}_2=\sage{Upper((1-1/sqrt(2))*max(C2_v2_1,C2_v2_2),digits)}.$

If $X\geq 1$, and $\alpha\neq 1$ then we can replace $\mathrm{E}_\alpha^{(v)}$ by $\mathrm{D}_v\left(1+\frac{|\alpha-1|}{\alpha-\frac{1}{2}}\right)$. 
\end{lemma}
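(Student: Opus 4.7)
The plan is to use partial summation, promoting the critical-exponent estimate for $S(t)=\sum_{\ell\leq t,(\ell,v)=1}\mu^2(\ell)/\ell$ provided by Lemma~\ref{sum2:critic:1} to one for $T(X)=\sum_{\ell\leq X,(\ell,v)=1}\mu^2(\ell)/\ell^\alpha$, and to treat the ranges $X\geq 1$ and $0<X<1$ separately so as to produce a bound valid on all of $(0,\infty)$.

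For $X\geq 1$, Abel summation against $t\mapsto t^{1-\alpha}$ gives
\begin{equation*}
T(X)=S(X)X^{1-\alpha}+(\alpha-1)\int_1^X S(t)\,t^{-\alpha}\,dt.
\end{equation*}
Substituting $S(t)=C(\log t+\mathfrak{b}_v)+R(t)$ with $C=6v/(\kappa(v)\pi^2)$ and the sharper ($X\geq 1$) bound $|R(t)|\leq (\mathrm{D}_v\sqrt{v}/\varphi_{1/2}(v))t^{-1/2}$ coming from Lemma~\ref{sum2:critic:1}, I would compute the elementary integrals $\int_1^X\log t\cdot t^{-\alpha}\,dt$ and $\int_1^X t^{-\alpha}\,dt$; after cancellations, the deterministic part of the right-hand side collapses to $C\mathfrak{b}_v+C/(\alpha-1)-C/((\alpha-1)X^{\alpha-1})$. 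The identification
\begin{equation*}
M:=\frac{v^\alpha\,\zeta(\alpha)}{\kappa_\alpha(v)\,\zeta(2\alpha)}\;=\;C\mathfrak{b}_v+\frac{C}{\alpha-1}+(\alpha-1)\int_1^\infty R(t)\,t^{-\alpha}\,dt
\end{equation*}
then follows, for $\alpha>1$, by letting $X\to\infty$ in Abel's formula and using the Euler-product evaluation $T(\infty)=\prod_{p\nmid v}(1+p^{-\alpha})=M$; for $1/2<\alpha<1$ both sides of the identity are analytic in $\alpha$ on $\{\Re\alpha>1/2,\ \alpha\neq 1\}$, with a common simple pole of residue $C$ at $\alpha=1$, so the identity extends there by analytic continuation. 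Once $M$ is identified, the Abel formula becomes
\begin{equation*}
T(X)-\left(M-\frac{C}{(\alpha-1)X^{\alpha-1}}\right)=R(X)X^{1-\alpha}-(\alpha-1)\int_X^\infty R(t)\,t^{-\alpha}\,dt,
\end{equation*}
and bounding crudely with $|R(t)|\leq(\mathrm{D}_v\sqrt{v}/\varphi_{1/2}(v))t^{-1/2}$ and $\int_X^\infty t^{-\alpha-1/2}\,dt=X^{1/2-\alpha}/(\alpha-1/2)$ (valid for $\alpha>1/2$), the remainder is of size at most $(\mathrm{D}_v\sqrt{v}/\varphi_{1/2}(v))(1+|\alpha-1|/(\alpha-1/2))X^{1/2-\alpha}$. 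This produces the first entry of the maximum defining $\mathrm{E}_\alpha^{(v)}$ (after the $\sqrt{v}/\varphi_{1/2}(v)$ scaling present in the statement), and also yields the stronger conclusion for the restricted range $X\geq 1$ announced at the end of the Lemma.

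For $0<X<1$, the sum $T(X)$ is empty, so one must directly bound $X^{\alpha-1/2}\,|M-C/((\alpha-1)X^{\alpha-1})|$. Setting $g(X)=MX^{\alpha-1/2}-CX^{1/2}/(\alpha-1)$, one checks $g(X)\to 0$ as $X\to 0^+$ (since $\alpha>1/2$), so the supremum over $(0,1)$ is attained either at the boundary $X=1$, producing $|g(1)|=|M-C/(\alpha-1)|$ — the second entry of the maximum after scaling by $\varphi_{1/2}(v)/\sqrt{v}$ — or at the unique interior critical point $X_0$ determined by $X_0^{\alpha-1}=C/(2M(\alpha-1)(\alpha-1/2))$ (with absolute values in the obvious places when $\alpha<1$). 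Back-substituting $X_0$ into $|g(X_0)|=C\sqrt{X_0}/(\alpha-1/2)$ and writing the result in closed form in terms of $C$, $M$ and the zeta factors yields, after the same scaling, the third entry of the maximum. Taking the max of the three contributions gives the stated bound. The main difficulty is the main-term identification when $1/2<\alpha<1$: there $T(X)$ does not tend to a finite limit, so the Euler-product argument via $X\to\infty$ is unavailable, and one must rely on the analytic-continuation argument (or an equivalent Mellin/Dirichlet identity) to close the loop between the Abel output and the claimed main term $M-C/((\alpha-1)X^{\alpha-1})$.
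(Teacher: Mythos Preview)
Your proposal is correct and follows essentially the same route as the paper: Abel summation from Lemma~\ref{sum2:critic:1} for $X\geq 1$, followed by a direct calculus analysis of the empty-sum regime $0<X<1$, with the function $g(X)=MX^{\alpha-1/2}-CX^{1/2}/(\alpha-1)$ and its unique critical point producing the second and third entries of the maximum.

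The one substantive difference is in how you identify the constant $M=\frac{v^\alpha\zeta(\alpha)}{\kappa_\alpha(v)\zeta(2\alpha)}$. The paper simply invokes Theorem~\ref{general}, which has already furnished the asymptotic $T(X)=M-\frac{C}{(\alpha-1)X^{\alpha-1}}+O(X^{-\delta})$ for \emph{all} $\alpha>\tfrac12$, $\alpha\neq 1$; comparing this with the Abel output and letting $X\to\infty$ pins down the constant uniformly, with no case split between $\alpha>1$ and $\tfrac12<\alpha<1$ and no appeal to analytic continuation. Your Euler-product-plus-continuation argument is valid but does more work than necessary given what is already available. One further small point the paper makes explicit and you leave implicit: that the critical point $X_0$ actually lies in $(0,1)$, verified there via the inequality $\zeta(\alpha)(\alpha-1)>1$ from \cite[Cor.~1.14]{MV07}. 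This does not affect the validity of your bound (taking the maximum over $|g(1)|$ and $|g(X_0)|$ dominates $\sup_{(0,1)}|g|$ regardless of where $X_0$ sits), but it is needed if one wants the displayed maximum to be sharp rather than merely sufficient.
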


\begin{proof} 
If $X\geq 1$, by summation by parts, we can write $\sum_{\substack{\ell\leq X\\(\ell,v)=1}}\frac{\mu^2(\ell)}{\ell^\alpha}$ as
\begin{align}
&\left(\sum_{\substack{\ell\leq X\\(\ell,v)=1}}\frac{\mu^2(\ell)}{\ell}-\frac{v}{\kappa(v)}\frac{6\left(\log(X)+\mathfrak{b}_v\right)}{\pi^2}\right)\frac{1}{X^{\alpha-1}}-\frac{v}{\kappa(v)}\frac{6}{(\alpha-1)\pi^2}\frac{1}{X^{\alpha-1}}+\nonumber\\
&\frac{v}{\kappa(v)}\frac{6(\mathfrak{b}_v(\alpha-1)+1)}{\pi^2(\alpha-1)}+(\alpha-1)\int_1^X\left(\sum_{\substack{\ell\leq t\\(\ell,v)=1}}\frac{\mu^2(\ell)}{\ell}-\frac{v}{\kappa(v)}\frac{6\left(\log(t)+\mathfrak{b}_v\right)}{\pi^2}\right)\frac{dt}{t^{\alpha}}.\nonumber\\\label{step:alpha}
\end{align}
By Theorem \ref{general}, when $\alpha>\frac{1}{2}$, the main term in the asymptotic expression of the above summation is $\frac{v^\alpha}{\kappa_\alpha(v)}\frac{\zeta(\alpha)}{\zeta(2\alpha)}-\frac{v}{\kappa(v)}\frac{6}{
(\alpha-1)\pi^2}\frac{1}{X^{\alpha-1}}
$. By using Lemma \ref{sum2:critic:1} and by making $X\to\infty$, we conclude from  \eqref{step:alpha} that $\frac{v}{\kappa(v)}\frac{6(\mathfrak{b}(\alpha-1)+1)}{\pi^2(\alpha-1)}+(\alpha-1)\int_1^\infty\left(\sum_{\substack{\ell\leq t\\(\ell,v)=1}}\frac{\mu^2(\ell)}{\ell}-\frac{6}{\pi^2}\left(\log(t)+\mathfrak{b}_v\right)\right)\frac{dt}{t^{\alpha}}=\frac{v^\alpha}{\kappa_\alpha(v)}\frac{\zeta(\alpha)}{\zeta(2\alpha)}$. Further, by equation \eqref{BB}, we conclude that, for all $X\geq 1$, $\sum_{\substack{\ell\leq X\\(\ell,v)=1}}\frac{\mu^2(\ell)}{\ell^\alpha}$ is equal to
\begin{align*}
\frac{v^\alpha}{\kappa_\alpha(v)}\frac{\zeta(\alpha)}{\zeta(2\alpha)}-\frac{v}{\kappa(v)}\frac{6}{(\alpha-1)\pi^2}\frac{1}{X^{\alpha-1}}+O^*\left(\frac{\sqrt{v}\ \mathrm{D}_v}{\varphi_{\frac{1}{2}}(v)}\left(1+\frac{|\alpha-1|}{\alpha-\frac{1}{2}}\right)\frac{1}{X^{\alpha-\frac{1}{2}}}\right), 
\end{align*}
where $\mathrm{D}_1=\sage{Upper(Ram_new_cst,2)}$ and $\frac{\varphi_{\frac{1}{2}}(2)}{\sqrt{2}}\sage{Upper(max(C2_v2_1,C2_v2_2),digits)}\leq\mathrm{D}_2=\sage{Upper((1-1/sqrt(2))*max(C2_v2_1,C2_v2_2),digits)}$. 

Suppose now that $X\in(0,1)$. Define $g:X>0\mapsto\frac{v^{\alpha-1}\kappa(v)\pi^2\zeta(\alpha)(\alpha-1)}{6\kappa_\alpha(v)\zeta(2\alpha)}X^{\alpha-\frac{1}{2}}$ $-\sqrt{X}$. We have by \cite[Cor. 1.14]{MV07} that $1<\zeta(\alpha)(\alpha-1)<\alpha$. If $\alpha>1$, we derive that $\frac{\zeta(\alpha)(\alpha-1)}{\zeta(2\alpha)}>\frac{1}{\zeta(2)}$. As $\frac{v^{\alpha-1}\kappa(v)}{\kappa_\alpha(v)}=\frac{1+\frac{1}{v}}{1+\frac{1}{v^\alpha}}>1$ we conclude that $g(1)>0$ and $g$ has a critical point $x_0$ satisfying $0<x_0^{\alpha-1}=\frac{3\kappa_\alpha(v)\zeta(2\alpha)}{\left(\alpha-\frac{1}{2}\right)v^{\alpha-1}\kappa(v)\pi^2|\zeta(\alpha)(\alpha-1)|}<1$, with value $g(x_0)=\frac{1-\alpha}{\alpha-\frac{1}{2}}\sqrt{x_0}<0$. As $g(0)=0$, we conclude that if $\alpha>1$, then $\sup_{\{0<X<1\}}|g(X)|=\max\{g(1),|g(x_0)|\}$.

On the other hand, if $\frac{1}{2}<\alpha<1$, then $2\alpha-1<1$, $\zeta(\alpha)(\alpha-1)<\alpha<1<\frac{\zeta(2\alpha)}{\zeta(2)}$ and $\frac{v^{\alpha-1}\kappa(v)}{\kappa_\alpha(v)}<1$, so that $g(1)<0$. Moreover, the critical point $x_0$ of $g$ satisfies $x_0^{1-\alpha}<1$, so that $x_0<1$, and $g(x_0)>0$. Therefore, if $\frac{1}{2}<\alpha<1$, then $\sup_{\{0<X<1\}}|g(X)|=\max\{|g(1)|,g(x_0)\}$.

All in all, we derive
\begin{align}
\sup_{\{0<X<1\}}X^{\alpha-\frac{1}{2}}\left|\sum_{\substack{\ell\leq X\\(\ell,v)=1}}\frac{\mu^2(\ell)}{\ell^\alpha}-\frac{v^\alpha}{\kappa_\alpha(v)}\frac{\zeta(\alpha)}{\zeta(2\alpha)}+\frac{v}{\kappa(v)}\frac{6}{(\alpha-1)\pi^2}\frac{1}{X^{\alpha-1}}\right|&\nonumber\\
=\frac{v}{\kappa(v)}\frac{6}{|\alpha-1|\pi^2}\max\{|g(1)|,|g(x_0)|\},\label{boundAlpha}&
\end{align}
where,
\begin{align*}
&\frac{\varphi_{\frac{1}{2}}(v)}{\sqrt{v}}\frac{v}{\kappa(v)}\frac{6|g(1)|}{|\alpha-1|\pi^2}=\frac{\varphi_{\frac{1}{2}}(v)}{\sqrt{v}}\left|\frac{v^\alpha}{\kappa_\alpha(v)}\frac{\zeta(\alpha)}{\zeta(2\alpha)}-\frac{v}{\kappa(v)}\frac{6}{(\alpha-1)\pi^2}\right|,\\
&\frac{\varphi_{\frac{1}{2}}(v)}{\sqrt{v}}\frac{v}{\kappa(v)}\frac{6|g(x_0)|}{|\alpha-1|\pi^2}=\\
&\phantom{xxxxxxxxxxxxxx}\frac{\varphi_{\frac{1}{2}}(v)}{\sqrt{v}}\frac{|\alpha-1|}{\alpha-\frac{1}{2}}\left(\frac{3\kappa_\alpha(v)\zeta(2\alpha)}{\left(\alpha-\frac{1}{2}\right)v^{\alpha-1}\kappa(v)\pi^2|\zeta(\alpha)(\alpha-1)|}\right)^{\frac{2}{\alpha-1}}.\nonumber
\end{align*}
The result is obtained by defining $\mathrm{E}_\alpha^{(v)}, v\in\{1,2\}$, as the maximum between $\mathrm{D}_v\left(1+\frac{|\alpha-1|}{\alpha-\frac{1}{2}}\right)$ and the expression \eqref{boundAlpha}.
\end{proof} 
 
\begin{remark}\label{newdef} With the goal of obtaining a similar error shape in Lemma \ref{sum2:critic:1} to the one given in Lemma \ref{sum2:critic}, we extend the definition of $\mathrm{E}_\alpha^{(v)}$, $v\in\{1,2\}$ for $\alpha>\frac{1}{2}$, 
$\alpha\neq 1$, to the case $\alpha=1$ by defining $\mathrm{E}_1^{(1)}=\sage{Upper(CONSTANT1,digits)}$ and, upon observing that $\frac{\varphi_{\frac{1}{2}}(2)}{\sqrt{2}}\sage{Upper(4*C_sum2_22/pi^2,digits)}\leq\sage{Upper((1-1/sqrt(2)) * 4*C_sum2_22/pi^2,digits)}$, defining $\mathrm{E}_1^{(2)}=\sage{Upper(CONSTANT2,digits)}$.
\end{remark}

\begin{lemma}\label{seekfor}  Let $X>0$ and $q\in\mathbb{Z}_{>0}$. Then $\sum_{\substack{\ell\leq X\\(\ell,q)}}\frac{\mu^2(\ell)}{\ell}$ equals
\begin{align*}
\frac{q}{\kappa(q)}\frac{6}{\pi^2}\left(\log(X)+\mathfrak{b}_q\right)+O^*\left(\frac{\sqrt{q}}{\varphi_{\frac{1}{2}}(q)}\frac{\mathrm{E}_1^{(1)}\prod_{2|q}\frac{\mathrm{E}_1^{(2)}}{\mathrm{E}_1^{(1)}}}{\sqrt{X}}\right), 
\end{align*}
where $\mathfrak{b}_q$ is defined in Lemma \ref{corollary} and, if $\alpha>\frac{1}{2}$, $\alpha\neq 1$, $\sum_{\substack{\ell\leq X\\(\ell,q)=1}}\frac{\mu^2(\ell)}{\ell^\alpha}$ equals
\begin{align*}
\frac{q^\alpha}{\kappa_{\alpha}(q)}\frac{\zeta(\alpha)}{\zeta(2\alpha)}-\frac{q}{\kappa(q)}\frac{6}{(\alpha-1)\pi^2}\frac{1}{X^{\alpha-1}}+O^*\left(\frac{\sqrt{q}}{\varphi_{\frac{1}{2}}(q)}\frac{\mathrm{E}_\alpha^{(1)}\prod_{2|q}\frac{\mathrm{E}_\alpha^{(2)}}{\mathrm{E}_\alpha^{(1)}}}{X^{\alpha-\frac{1}{2}}}\right),
\end{align*}
 where $\mathrm{E}_\alpha^{(v)}, v\in\{1,2\}$, is defined as in Lemma \ref{sum2:critic}.
\end{lemma}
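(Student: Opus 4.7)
The plan is to reduce to the base cases $q\in\{1,2\}$ handled by Lemmas \ref{sum2:critic:1} and \ref{sum2:critic} (together with Remark \ref{newdef} extending the definition of $\mathrm{E}_\alpha^{(v)}$ to $\alpha=1$) by peeling off the odd prime factors of $q$ one at a time via a geometric recursion. Since the condition $(\ell,q)=1$ depends only on the set of primes dividing $q$, I would first reduce without loss of generality to $q$ square-free and write $q=vD$, where $v\in\{1,2\}$ records whether $2\mid q$ and $D$ is the odd part of $q$.

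Let $S_{q'}(X)$ denote the sum in the statement with $q$ replaced by a divisor $q'$ of $q$, and let $M_{q'}(X)$ and $R_{q'}(X):=S_{q'}(X)-M_{q'}(X)$ denote the proposed main term and the corresponding error. For a prime $p\mid D$, splitting $S_{q/p}(X)$ according to whether $p\mid\ell$, and in the $p$-divisible part writing $\ell=pm$ with $(m,p)=1$, yields the identity
\begin{align*}
S_q(X)=S_{q/p}(X)-\frac{1}{p^\alpha}S_q(X/p).
\end{align*}
A short calculation using $\kappa_\alpha(q)=(p^\alpha+1)\kappa_\alpha(q/p)$ and $\kappa(q)=(p+1)\kappa(q/p)$ (together with the telescoping identity $\mathfrak{b}_q=\mathfrak{b}_{q/p}+\log(p)/(p+1)$ in the case $\alpha=1$) shows that $M_q(X)$ obeys the same recursion, hence so does $R_q(X)$.

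Iterating this recursion in the $X$-variable telescopes to
\begin{align*}
R_q(X)=\sum_{n=0}^{\infty}\frac{(-1)^n}{p^{n\alpha}}R_{q/p}(X/p^n),
\end{align*}
where the remainder $(-1)^N p^{-N\alpha}R_q(X/p^N)$ tends to $0$ as $N\to\infty$: once $X/p^N<1$ the sum $S_q(X/p^N)$ vanishes, while $|M_q(X/p^N)|$ grows at most like $p^{N(\alpha-1)}$ (or linearly in $N$ for $\alpha=1$), which is overwhelmed by the prefactor $p^{-N\alpha}$. Assuming by strong induction on $\omega(D)$ that $|R_{q/p}(Y)|\leq B_{q/p}\,Y^{-(\alpha-1/2)}$ for all $Y>0$, the individual terms satisfy $p^{-n\alpha}(X/p^n)^{-(\alpha-1/2)}=p^{-n/2}X^{-(\alpha-1/2)}$, so their sum is the geometric series $(1-p^{-1/2})^{-1}X^{-(\alpha-1/2)}$, giving $B_q\leq (1-p^{-1/2})^{-1}B_{q/p}$. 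Iterating over all primes of $D$ produces the factor $\prod_{p\mid D}(1-p^{-1/2})^{-1}=\sqrt{D}/\varphi_{1/2}(D)$, which combined with the base constant $\sqrt{v}/\varphi_{1/2}(v)\cdot\mathrm{E}_\alpha^{(v)}$ supplied by the appropriate lemma yields $\sqrt{q}/\varphi_{1/2}(q)\cdot\mathrm{E}_\alpha^{(v)}$, which is exactly $\sqrt{q}/\varphi_{1/2}(q)\cdot\mathrm{E}_\alpha^{(1)}\prod_{2\mid q}\mathrm{E}_\alpha^{(2)}/\mathrm{E}_\alpha^{(1)}$ (the product over $2\mid q$ being empty when $2\nmid q$).

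The main (mild) obstacle is verifying convergence of the telescoped series and the vanishing of the iteration remainder, particularly when $\alpha>1$ where $M_q(Y)$ blows up as $Y\to 0^+$; this step relies on the explicit form of $M_q$ provided by Lemma \ref{sum2:critic}. Beyond that, the remaining work is essentially algebraic: checking the recursion for $M_q$ via the multiplicativity of $\kappa_\alpha$ and $\kappa$, and, in the case $\alpha=1$, via the log-telescope for $\mathfrak{b}_q$.
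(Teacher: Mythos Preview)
Your argument is correct, but the paper organizes the same reduction differently. Rather than peeling off one odd prime at a time via the self-referential recursion $S_q(X)=S_{q/p}(X)-p^{-\alpha}S_q(X/p)$ and then summing a geometric series in the $X$-variable, the paper invokes in a single step the Liouville-function convolution identity (in the spirit of \cite[Lemma~2.17]{MV07})
\[
\sum_{\substack{\ell\leq X\\(\ell,q)=1}}\frac{\mu^2(\ell)}{\ell^\alpha}
=\sum_{d\in\mathcal{D}_r}\frac{\lambda(d)}{d^\alpha}\sum_{\substack{e\leq X/d\\(e,v)=1}}\frac{\mu^2(e)}{e^\alpha},
\]
where $r$ is the odd radical of $q$, $v\in\{1,2\}$, and $\mathcal{D}_r$ is the set of integers all of whose prime factors divide $r$. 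Plugging Lemma~\ref{sum2:critic:1} or~\ref{sum2:critic} into the inner sum and summing the error terms gives $\sum_{d\in\mathcal{D}_r}d^{-1/2}=\prod_{p\mid r}(1-p^{-1/2})^{-1}=\sqrt{r}/\varphi_{1/2}(r)$ in one stroke. Your recursion, once unwound over all odd primes of $q$, \emph{is} precisely this identity written one Euler factor at a time, and the two routes produce identical constants. The paper's packaging is shorter and avoids the inductive bookkeeping and the remainder-vanishing verification you flag as the ``mild obstacle''; your version, by contrast, is more elementary in that it never needs the Liouville convolution trick explicitly.
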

\begin{proof} Proceed as in \cite[Lemma 2.17]{MV07}. Define $\mathcal{D}_{r}=\{p\text{ prime }, p|d\implies p|r\}\subset\mathbb{Z}_{\geq 0}$. Consider $v\in\{1,2\}$ and write $q=v^kr, k\in\mathbb{Z}_{>0}$, with $(v,r)=1$ (where, if $v=1$, then $k=0$). Then for all $s\in\mathbb{C}$ such that $\Re(s)>1-\alpha$, we have the identity 
\begin{align*}
\sum_{\substack{\ell\\(\ell,q)=1}}\frac{\mu^2(\ell)}{\ell^{s+\alpha}}=\prod_{p|r}\left(1+\frac{1}{p^{s+\alpha}}\right)^{-1}\cdot\sum_{\substack{\ell\\(\ell,v)=1}}\frac{\mu^2(\ell)}{\ell^{s+\alpha}}=\sum_{\substack{d\\d\in\mathcal{D}_r}}\frac{\lambda(d)}{d^{s+\alpha}}\cdot\sum_{\substack{e\\(e,v)=1}}\frac{\mu^2(e)}{e^{s+\alpha}},
\end{align*}
where $\lambda$ corresponds to the Liouville function:  the completely multiplicative function taking the value $-1$ at every prime number.
Hence
\begin{equation}\label{nice}
\sum_{\substack{\ell\leq X\\(\ell,q)=1}}\frac{\mu^2(\ell)}{\ell^\alpha}=\sum_{\substack{d\\d\in\mathcal{D}_r}}\frac{\lambda(d)}{d^\alpha}\sum_{\substack{e\leq\frac{X}{d}\\(e,v)=1}}\frac{\mu^2(e)}{e^\alpha},
\end{equation}
which, as in Lemma \ref{SumEstimations}, does not require the condition $\{d\leq X\}$. We are considering thus an infinite range of values of $d$ for the above outer sum, which can be estimated as long as the inner sum is expressed asymptotically with an error term valid even when it has an empty condition plus the fact that the series of error terms for this expression, formed by the outer sum, converges.

If $\alpha=1$, by using Lemma \ref{sum2:critic:1} in \eqref{nice}, we derive the same main term as the one given in Corollary \ref{corollary} $\mathbf{(b)}$, but a better error term magnitude, since $\sum_{\substack{\ell\leq X\\(\ell,q)=1}}\frac{\mu^2(\ell)}{\ell}$ can be written as 
\begin{align*}
&\sum_{\substack{d\\d\in\mathcal{D}_r}}\frac{\lambda(d)}{d}\left(\frac{6}{\pi^2}\frac{v}{\kappa(v)}\left(\log\left(\frac{X}{d}\right)+\mathfrak{b}_v\right)+O^*\left(\frac{\sqrt{v}}{\varphi_{\frac{1}{2}}(v)}\frac{\mathrm{E}_{1} ^{(v)}\sqrt{d}}{\sqrt{X}}\right)\right)\\
&=\frac{vr}{\kappa(vr)}\frac{6}{\pi^2}\left(\log(X)+\mathfrak{b}_v\right)-\frac{v}{\kappa(v)}\frac{6}{\pi^2}\sum_{\substack{d\\d\in\mathcal{D}_r}}\frac{\lambda(d)\log(d)}{d}\\
&\phantom{xxxxxxxxxxxxxxxxxxll}+O^*\left(\frac{\sqrt{v}}{\varphi_{\frac{1}{2}}(v)}\sum_{\substack{d\\d\in\mathcal{D}_r}}\frac{\mathrm{E}_{1} ^{(v)}}{\sqrt{d}}\cdot\frac{1}{\sqrt{X}}\right)\\
&=\frac{q}{\kappa(q)}\frac{6}{\pi^2}\left(\log(X)+\mathfrak{b}_q\right)+O^*\left(\frac{\sqrt{q}}{\varphi_{\frac{1}{2}}(q)}\frac{\mathrm{E}_1^{(1)}\prod_{2|q}\frac{\mathrm{E}_1^{(2)}}{\mathrm{E}_1^{(1)}}}{\sqrt{X}}\right),
\end{align*}
where we have used that
\begin{align*}
\sum_{\substack{d\\d\in\mathcal{D}_r}}\frac{-\lambda(d)\log(d)}{d}=\frac{r}{\kappa(r)}\left(\sum_{\substack{d\\d\in\mathcal{D}_r}}\frac{\lambda(d)}{d^s}\right)^{-1}_{s=1}\cdot\left(\sum_{\substack{d\\d\in\mathcal{D}_r}}\frac{\lambda(d)}{d^s}\right)'_{s=1}\phantom{xxxxxxx}&\\
=\frac{r}{\kappa(r)}\sum_{p|r}\left[\left(\left(1+\frac{1}{p^s}\right)^{-1}\right)'\left(1+\frac{1}{p^s}\right)\right]_{s=1}=\frac{r}{\kappa(r)}\sum_{p|r}\frac{\log(p)}{p+1},&
\end{align*}
and that $\frac{vr}{\kappa(vr)}=\frac{q}{\kappa(q)}$, $\frac{\sqrt{vr}}{\varphi_{\frac{1}{2}}(vr)}=\frac{\sqrt{q}}{\varphi_{\frac{1}{2}}(q)}$, $\sum_{p|v}\frac{\log(p)}{p+1}+\sum_{p|r}\frac{\log(p)}{p+1}=\sum_{p|q}\frac{\log(p)}{p+1}$.

Finally, if $\alpha\neq 1$, then by using Lemma \ref{sum2:critic} in \eqref{nice} and by noticing that $\frac{(vr)^\alpha}{\kappa_\alpha(vr)}=\frac{q^\alpha}{\kappa_\alpha(q)}$, we derive that $\sum_{\substack{\ell\leq X\\(\ell,q)=1}}\frac{\mu^2(\ell)}{\ell^\alpha}$ can be expressed as 
\begin{align*}
&\sum_{\substack{d\\d\in\mathcal{D}_r}}\frac{\lambda(d)}{d^\alpha}\left(\frac{v^\alpha}{\kappa_\alpha(v)}\frac{\zeta(\alpha)}{\zeta(2\alpha)}-\frac{v}{\kappa(v)}\frac{6}{(\alpha-1)\pi^2}\frac{d^{\alpha-1}}{X^{\alpha-1}}+O^*\left(\frac{\sqrt{v}}{\varphi_{\frac{1}{2}}(v)}\frac{\mathrm{E}_\alpha^{(v)} d^{\alpha-\frac{1}{2}}}{X^{\alpha-\frac{1}{2}}}\right)\right)\\
&=\frac{q^\alpha}{\kappa_{\alpha}(q)}\frac{\zeta(\alpha)}{\zeta(2\alpha)}-\frac{q}{\kappa(q)}\frac{6}{(\alpha-1)\pi^2}\frac{1}{X^{\alpha-1}}+O^*\left(\frac{\sqrt{q}}{\varphi_{\frac{1}{2}}(q)}\frac{\mathrm{E}_\alpha^{(1)}\prod_{2|q}\frac{\mathrm{E}_\alpha^{(2)}}{\mathrm{E}_\alpha^{(1)}}}{X^{\alpha-\frac{1}{2}}}\right),
\end{align*}
which, again, has the expected main term according to Theorem \ref{general} but an error term of lower magnitude.
\end{proof}

Let us recall that the requirement of the empty sum estimation, as in Lemma \ref{SumEstimations}, worsens a bit the error term constants with respect to the ones under condition $X\geq 1$, say, as shown in lemmas \ref{sum2:critic:1} and \ref{sum2:critic}, but we gain regularity in our expressions in the variable $d$. It is precisely that regularity that allows us to derive the coprimality restrictions products in a simpler manner: for example, we derive immediately that $\sum_{\substack{d\\d\in\mathcal{D}_r}}\frac{\lambda(d)}{d}=\frac{r}{\kappa(r)}$, whereas condition $\frac{X}{d}\geq 1$ would have imposed us to analyze $\sum_{\substack{d\leq X\\d\in\mathcal{D}_r}}\frac{\lambda(d)}{d}$ or, rather, $\sum_{\substack{d>X\\d\in\mathcal{D}_r}}\frac{\lambda(d)}{d}$. This last observation is key for the work carried out in \cite{RA13} and \cite{RA19}. 

\begin{corollary} Let $X>0$. Then
\begin{align*}
\sum_{\substack{\ell> X\\(\ell,q)=1}}\frac{\mu^2(\ell)}{\ell^2}=\frac{q}{\kappa(q)}\frac{6}{\pi^2}\frac{1}{X}+O^*\left(\frac{\sqrt{q}}{\varphi_{\frac{1}{2}}(q)}\frac{\sage{Upper(E(2,1),digits)}}{X^{\frac{3}{2}}}\right),&\quad\text{ if }2\nmid q,\\
\phantom{\sum_{\substack{\ell> X\\(\ell,q)=1}}\frac{\mu^2(\ell)}{\ell^2}}=\frac{q}{\kappa(q)}\frac{6}{\pi^2}\frac{1}{X}+O^*\left(\frac{\sqrt{q}}{\varphi_{\frac{1}{2}}(q)}\frac{\sage{Upper(E(2,2),digits)}}{X^{\frac{3}{2}}}\right),&\quad\text{ if }2|q.
\end{align*} 
\begin{proof} By applying Lemma \ref{seekfor} with $\alpha=2$, we have
\begin{align*}
\sum_{\substack{\ell\leq X\\(\ell,q)=1}}\frac{\mu^2(\ell)}{\ell^2}&=\frac{q^2}{\kappa_{2}(q)}\frac{\zeta(2)}{\zeta(4)}-\frac{q}{\kappa(q)}\frac{6}{\pi^2}\frac{1}{X}+O^*\left(\frac{\sqrt{q}}{\varphi_{\frac{1}{2}}(q)}\frac{\mathrm{E}_2^{(1)}\prod_{2|q}\frac{\mathrm{E}_2^{(2)}}{\mathrm{E}_2^{(1)}}}{X^{\frac{3}{2}}}\right),
\end{align*}
where, for $v\in\{1,2\}$, $\mathrm{E}_2^{(v)}$ is defined as
\begin{align*}
\max\left\{\frac{5\ \mathrm{D}_v}{3},\frac{\varphi_{\frac{1}{2}}(v)}{\sqrt{v}}\left|\frac{v^2}{\kappa_2(v)}\frac{\zeta(2)}{\zeta(4)}-\frac{v}{\kappa(v)}\frac{6}{\pi^2}\right|,\frac{\varphi_{\frac{1}{2}}(v)}{\sqrt{v}}\frac{2}{3}\left(\frac{2\kappa_2(v)\zeta(4)}{v\kappa(v)\pi^2\zeta(2)}\right)^{2}\right\}&\\
\leq\begin{cases}
\sage{Upper(E(2,1),digits)},&\quad\text{ if }v=1,\\
\sage{Upper(E(2,2),digits)},&\quad\text{ if }v=2.
\end{cases}&
\end{align*}
We obtain the result by observing that
\begin{equation*}
\sum_{\substack{\ell> X\\(\ell,q)=1}}\frac{\mu^2(\ell)}{\ell^2}=\frac{q^2}{\kappa_{2}(q)}\frac{\zeta(2)}{\zeta(4)}
-\sum_{\substack{\ell\leq X\\(\ell,q)=1}}\frac{\mu^2(\ell)}{\ell^2}.
\end{equation*} 
\end{proof} 
\end{corollary}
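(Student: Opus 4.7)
The plan is to derive this corollary directly from Lemma \ref{seekfor} applied at $\alpha=2$, combined with the elementary observation that the tail of an absolutely convergent series equals the total sum minus the partial sum. First I would compute the total sum: since $\ell\mapsto\mu^2(\ell)/\ell^2$ is supported on square-free integers and is multiplicative, for $\Re(s)$ large enough,
\begin{equation*}
\sum_{\substack{\ell\\(\ell,q)=1}}\frac{\mu^2(\ell)}{\ell^2}=\prod_{p\nmid q}\left(1+\frac{1}{p^2}\right)=\frac{q^2}{\kappa_2(q)}\cdot\frac{\zeta(2)}{\zeta(4)},
\end{equation*}
so it suffices to subtract the partial sum and use the estimate of Lemma \ref{seekfor}.

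Applying Lemma \ref{seekfor} with $\alpha=2$ gives immediately
\begin{equation*}
\sum_{\substack{\ell\leq X\\(\ell,q)=1}}\frac{\mu^2(\ell)}{\ell^2}=\frac{q^2}{\kappa_2(q)}\frac{\zeta(2)}{\zeta(4)}-\frac{q}{\kappa(q)}\frac{6}{\pi^2}\frac{1}{X}+O^*\!\left(\frac{\sqrt{q}}{\varphi_{\frac{1}{2}}(q)}\cdot\frac{\mathrm{E}_2^{(1)}\prod_{2|q}\mathrm{E}_2^{(2)}/\mathrm{E}_2^{(1)}}{X^{3/2}}\right),
\end{equation*}
where $\mathrm{E}_2^{(v)}$ is obtained by specializing the formula in Lemma \ref{sum2:critic} to $\alpha=2$. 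Subtracting this identity from the value of the total sum computed above yields the advertised expansion of the tail, with the sign of the $1/X$ term now positive and with the same error constant.

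It remains to bound $\mathrm{E}_2^{(v)}$ for $v\in\{1,2\}$. At $\alpha=2$, the factor $1+|\alpha-1|/(\alpha-\tfrac{1}{2})=\tfrac{5}{3}$ makes the first argument of the $\max$ equal $\tfrac{5}{3}\mathrm{D}_v$. The second argument,
\begin{equation*}
\frac{\varphi_{\frac{1}{2}}(v)}{\sqrt{v}}\left|\frac{v^2}{\kappa_2(v)}\frac{\zeta(2)}{\zeta(4)}-\frac{v}{\kappa(v)}\frac{6}{\pi^2}\right|,
\end{equation*}
is small because $\zeta(2)/\zeta(4)$ is close to $6/\pi^2\cdot\zeta(2)/(6/\pi^2\cdot\zeta(4))$ compared to the first argument, and the third argument
\begin{equation*}
\frac{\varphi_{\frac{1}{2}}(v)}{\sqrt{v}}\cdot\frac{2}{3}\left(\frac{2\kappa_2(v)\zeta(4)}{v\kappa(v)\pi^2\zeta(2)}\right)^{2}
\end{equation*}
is a small numerical constant as well. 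I would evaluate each of these three candidates using interval arithmetic for $v=1$ and $v=2$ to obtain the explicit numerical bounds stated.

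No serious obstacle is expected: the only delicate point is the bookkeeping of the coprimality correction factor $\prod_{2|q}\mathrm{E}_2^{(2)}/\mathrm{E}_2^{(1)}$ coming from Lemma \ref{seekfor}, which reduces to the constant $\mathrm{E}_2^{(1)}$ if $2\nmid q$ and to $\mathrm{E}_2^{(2)}$ if $2\mid q$, giving precisely the two cases displayed in the statement.
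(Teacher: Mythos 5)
Your proposal is correct and follows essentially the same route as the paper: apply Lemma \ref{seekfor} with $\alpha=2$, specialize the constant $\mathrm{E}_2^{(v)}$ from Lemma \ref{sum2:critic} and bound it numerically, and recover the tail as the full Euler product $\frac{q^2}{\kappa_2(q)}\frac{\zeta(2)}{\zeta(4)}$ minus the partial sum. The only difference is cosmetic ordering (you compute the total sum first, the paper invokes the identity last), so there is nothing further to add.
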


\subsection{Achieving the critical exponent}\label{achieving}

We present a new method to achieve the critical exponent for estimations of averages of the form studied in Theorem \ref{general} provided that the difference between $\beta$ and $\alpha$ defined therein is strictly bigger than $\frac{1}{2}$: in this case, we go to the edge of the special form of the convolution method given in \S\ref{TCM}; moreover, no extra conditions on $\beta$ are needed but $\beta-\alpha>\frac{1}{2}$. Nonetheless, if $\beta-\alpha\leq\frac{1}{2}$, then we should still refer to Theorem \ref{general} and its choice of parameter (or indirectly to it, as shown by summation by parts in Theorem \ref{general++} $\mathbf{(B)}$, $\mathbf{(C)}$).   

\begin{theorem}\label{general++}  
  Let $X>0$, be a real number and $q$ a positive integer. Consider a multiplicative function $f:\mathbb{Z}^+\to\mathbb{C}$ such that for every prime number $p$ satisfying $(p,q)=1$, we have $f(p)=\frac{1}{p^{\alpha}}+O\left(\frac{1}{p^{\beta}}\right)$, where $\alpha$, $\beta$ are real numbers satisfying $\beta>\alpha$, $\beta-\alpha>\frac{1}{2}$. We have the following

 \noindent $\mathbf{(A)}$ If $\alpha>\frac{1}{2}$ then 
 \begin{align*}
 \sum_{\substack{\ell\leq X\\ (\ell,q)=1}}\mu^2(\ell)f({\ell})=F_\alpha^{q}(X)+O^*\left(\mathrm{p}_\alpha(q)\cdot\frac{\mathrm{w}_\alpha^q\ \mathrm{P}_\alpha}{X^{\alpha-\frac{1}{2}}}\right),
 \end{align*} 
 where $F_\alpha^q(X)$ is defined as in Theorem \ref{general}, and, if $2|q$, $\mathrm{w}_{\alpha}^q=\mathrm{E}_\alpha^{(2)}$, whereas, if $2\nmid q$, 
\begin{align*}
\mathrm{w}_\alpha^q&=\left(\frac{\sqrt{2}-1}{\sqrt{2}-1+|2^\alpha f(2)-1|}\right)\left(\mathrm{E}_\alpha^{(1)}+\frac{|2^\alpha f(2)-1|\ \mathrm{E}_\alpha^{(2)}}{\varphi_{\frac{1}{2}}(2)}\right).
\end{align*}
Here $\mathrm{E}_\alpha^{(v)}, v\in\{1,2\}$ is defined in Lemma \ref{sum2:critic} and Remark \ref{newdef}, and we have 
\begin{align*}
\mathrm{p}_\alpha(q)=\prod_{p|q}\left(1+\frac{1-|f(p)p^\alpha-1|}{\sqrt{p}-1+|f(p)p^{\alpha}-1|}\right),\mathrm{P}_\alpha=\prod_{p}\left(1+\frac{|f(p)p^\alpha-1|}{\sqrt{p}-1}\right),
 \end{align*}    
for all $\alpha$, where $\mathrm{P}_\alpha$ is a convergent infinite product. 

 \noindent $\mathbf{(B)}$ If $\alpha<\frac{1}{2}$ then $ \sum_{\substack{\ell\leq X\\ (\ell,q)=1}}\mu^2(\ell)f({\ell})$ can be expressed as 
 \begin{equation*}
\frac{H_{f'}^q(0)\varphi(q)}{(1-\alpha)q}X^{1-\alpha}+O^*\left(\mathrm{p}_{\alpha}(q)\cdot\left(1+\frac{2-2\alpha}{1-2\alpha}\right)\mathrm{w'}_{\alpha}^q\mathrm{P}_{\alpha} \ X^{\frac{1}{2}-\alpha}\right),
 \end{equation*} 
where $\mathrm{p}_{\alpha}(q)$ and $\mathrm{P}_{\alpha}$ are as in $\mathbf{(A)}$ and for $\alpha\leq\frac{1}{2}$,
 \begin{align*}
H_{f'}^q(0)&=\prod_{p\nmid q}\left(1-\frac{p^{1-\alpha}-f(p)p+f(p)}{p^{2-\alpha}} \right),\\
\mathrm{w'}_{\alpha}^q&= 
\begin{cases}\mathrm{E}_1^{(2)}=\sage{Upper(CONSTANT2,digits)},&\quad\text{ if }2|q,\\
\left(\frac{\sqrt{2}-1}{\sqrt{2}-1+|2^\alpha f(2)-1|}\right)\left(\mathrm{E}_1^{(1)}+\frac{|2^\alpha f(2)-1|\ \mathrm{E}_1^{(2)}}{\varphi_{\frac{1}{2}}(2)}\right),&\quad\text{ if }2\nmid q.\\
\end{cases}
\end{align*} 

 \noindent $\mathbf{(C)}$ If $\alpha=\frac{1}{2}$ then $\sum_{\substack{\ell\leq X\\ (\ell,q)=1}}\mu^2(\ell)f({\ell})$ can be written as 
 \begin{equation*}
\frac{H_{f'}^q(0)\varphi(q)}{(1-\alpha)q}X^{1-\alpha}+O^*\left(\mathrm{C}+\mathrm{p}_{\alpha}(q)\mathrm{w'}_{\alpha}^q\mathrm{P}_{\alpha} \ \left(1+\frac{1}{2}\log(X)\right)\right),
 \end{equation*} 
where $\mathrm{p}_{\alpha}(q)$ and $\mathrm{P}_{\alpha}$ are as in $\mathbf{(A)}$, $H_{f'}^q(0)$ and $\mathrm{w'}_{\alpha}^q$ are as in $\mathbf{(B)}$ and
\begin{align*}
\mathrm{C}&=\left|\frac{H_{f'}^{q}(0)\varphi(q)}{q}\left(\sum_{p\nmid q}\frac{\log(p)(\sqrt{p}-(p-2)f(p))}{(f(p)+\sqrt{p})(p-1)}+\gamma+\sum_{p|q}\frac{\log(p)}{p-1}-2\right)\right|.
\end{align*}
 \end{theorem}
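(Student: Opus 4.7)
For part $\mathbf{(A)}$ with $\alpha>\frac{1}{2}$, I would introduce a convolution identity different from the one used in Theorem \ref{general}. Define $g$ multiplicatively, supported on squarefree integers, by $g(p)=p^{\alpha}f(p)-1$. Then for $\ell$ squarefree and coprime to $q$, $\ell^{\alpha}f(\ell)=\prod_{p\mid\ell}(1+g(p))=\sum_{d\mid\ell}g(d)$, so $\mu^{2}(\ell)f(\ell)=\mu^{2}(\ell)\ell^{-\alpha}\sum_{d\mid\ell}g(d)$. Writing $\ell=de$ with $d,e$ squarefree and coprime, both coprime to $q$,
\begin{equation*}
\sum_{\substack{\ell\leq X\\(\ell,q)=1}}\mu^{2}(\ell)f(\ell)=\sum_{\substack{d\\(d,q)=1}}\frac{g(d)\mu^{2}(d)}{d^{\alpha}}\sum_{\substack{e\leq X/d\\(e,qd)=1}}\frac{\mu^{2}(e)}{e^{\alpha}}.
\end{equation*}
The key point is that $d$ ranges freely over all squarefree integers coprime to $q$, with no upper bound: since Lemma \ref{seekfor} estimates the inner sum uniformly (including for $X/d<1$), I would apply it with the squarefree modulus $qd$ to obtain simultaneously a main term that reassembles into $F_{\alpha}^{q}(X)$ and an error of critical order $X^{-(\alpha-1/2)}$.

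The main-term identification is Euler-product bookkeeping. Summing the piece $(qd)^{\alpha}/\kappa_{\alpha}(qd)\cdot\zeta(\alpha)/\zeta(2\alpha)$ against $g(d)\mu^{2}(d)/d^{\alpha}$ yields $\frac{q^{\alpha}}{\kappa_{\alpha}(q)}\frac{\zeta(\alpha)}{\zeta(2\alpha)}\prod_{p\nmid q}\bigl(1+g(p)/(p^{\alpha}+1)\bigr)$, and a local computation using $\zeta(\alpha)/\zeta(2\alpha)=\prod_p(1+p^{-\alpha})$ (valid by analytic continuation for $\alpha>\frac{1}{2}$) shows this equals $H_{f}^{q}(0)\zeta(\alpha)\varphi_{\alpha}(q)/q^{\alpha}$; the secondary term is treated analogously using $6/\pi^{2}=\prod_p(1-p^{-2})$, while the $\alpha=1$ case is handled through $H_{f}^{q}{}'(0)=-\sum_{d}g(d)\log(d)/d$. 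For the error, Lemma \ref{seekfor} contributes $\sqrt{qd}/\varphi_{\frac{1}{2}}(qd)\cdot \mathrm{E}_{\alpha}^{(v)}d^{\alpha-1/2}/X^{\alpha-1/2}$ with $v\in\{1,2\}$ depending on whether $2\mid qd$; summed against $|g(d)|\mu^{2}(d)/d^{\alpha}$ and using $\sqrt{d}/\varphi_{\frac{1}{2}}(d)=\prod_{p\mid d}\sqrt{p}/(\sqrt{p}-1)$, this produces the Euler product $\prod_{p\nmid q}\bigl(1+|g(p)|/(\sqrt{p}-1)\bigr)$. When $2\mid q$ the constant is $\mathrm{E}_{\alpha}^{(2)}$ throughout, giving $\mathrm{E}_{\alpha}^{(2)}\mathrm{p}_{\alpha}(q)\mathrm{P}_{\alpha}$; when $2\nmid q$, splitting the $d$-sum by whether $2\mid d$ and recombining the two Euler products yields the factor $(\sqrt{2}-1)/(\sqrt{2}-1+|2^{\alpha}f(2)-1|)$ alongside $\mathrm{p}_{\alpha}(q)\mathrm{P}_{\alpha}$, reconstructing exactly $\mathrm{w}_{\alpha}^{q}$. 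Convergence of $\mathrm{P}_{\alpha}$ is precisely where the hypothesis $\beta-\alpha>\frac{1}{2}$ enters: the $p$-th factor is $1+O(p^{-(\beta-\alpha+1/2)})$.

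For parts $\mathbf{(B)}$ and $\mathbf{(C)}$ with $\alpha\leq\frac{1}{2}$, I would reduce to part $\mathbf{(A)}$ applied at $\alpha_{0}=1$ by setting $f_{0}(n)=f(n)/n^{1-\alpha}$, so that $f_{0}(p)=1/p+O(1/p^{\beta-\alpha+1})$ satisfies the hypotheses with $\beta_{0}-\alpha_{0}=\beta-\alpha>\frac{1}{2}$. A local Euler-factor computation shows $H_{f_{0}}^{q}(0)=H_{f'}^{q}(0)$ and that $\mathrm{P}_{1},\mathrm{p}_{1}(q)$ for $f_{0}$ agree with $\mathrm{P}_{\alpha},\mathrm{p}_{\alpha}(q)$ for $f$. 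Part $\mathbf{(A)}$ then gives $A(X):=\sum_{\ell\leq X,(\ell,q)=1}\mu^{2}(\ell)f_{0}(\ell)=\frac{H_{f'}^{q}(0)\varphi(q)}{q}(\log X+c)+O^{*}(\mathrm{err}(X))$ with $\mathrm{err}(X)=\mathrm{p}_{\alpha}(q)\mathrm{w'}_{\alpha}^{q}\mathrm{P}_{\alpha}/\sqrt{X}$. Abel summation on $\mu^{2}(\ell)f(\ell)=\mu^{2}(\ell)f_{0}(\ell)\cdot\ell^{1-\alpha}$ yields
\begin{equation*}
\sum_{\substack{\ell\leq X\\(\ell,q)=1}}\mu^{2}(\ell)f(\ell)=X^{1-\alpha}A(X)-(1-\alpha)\int_{1}^{X}A(t)\,t^{-\alpha}\,dt.
\end{equation*}
Substituting the formula for $A(t)$, the $\log$-against-$t^{-\alpha}$ integral computed by parts shows the $\log X$ terms cancel, leaving the main term $\frac{H_{f'}^{q}(0)\varphi(q)}{(1-\alpha)q}X^{1-\alpha}$ plus a bounded residual. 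For $\alpha<\frac{1}{2}$ (part $\mathbf{(B)}$), the error contribution $(1-\alpha)\int_{1}^{X}t^{-\alpha-1/2}\,dt=\tfrac{2(1-\alpha)}{1-2\alpha}(X^{1/2-\alpha}-1)$, combined with $X^{1-\alpha}\mathrm{err}(X)=O(X^{1/2-\alpha})$, yields the announced factor $1+\tfrac{2-2\alpha}{1-2\alpha}$. In part $\mathbf{(C)}$ at $\alpha=\frac{1}{2}$, the integral $\int_{1}^{X}t^{-1}dt=\log X$ supplies the $(1+\tfrac{1}{2}\log X)$ factor and the residual is absorbed into $\mathrm{C}$; substituting $f_{0}(p)=f(p)/\sqrt{p}$ into $T_{f_{0}}^{q}$ produces exactly the sum appearing in $\mathrm{C}$. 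The case $0<X<1$ is handled by the empty-sum extension as in lemmas \ref{sum2:critic:1} and \ref{sum2:critic}.

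The main obstacle is the Euler-product bookkeeping in part $\mathbf{(A)}$: verifying that the new convolution's main-term sum reassembles exactly into $F_\alpha^q(X)$ and that the error factor combines correctly into $\mathrm{p}_{\alpha}(q)\mathrm{w}_{\alpha}^{q}\mathrm{P}_{\alpha}$, with careful separate treatment of the prime $2$ when $2\nmid q$. The remaining steps, including the Abel summation in parts $\mathbf{(B)}$ and $\mathbf{(C)}$ and the $0<X<1$ endpoint case, are routine applications of the identities already established.
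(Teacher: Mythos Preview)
Your proposal is correct and follows essentially the same approach as the paper: the function $g(p)=p^{\alpha}f(p)-1$ is exactly the paper's $i_f$, the convolution identity and the application of Lemma~\ref{seekfor} with the $2\mid q$ versus $2\nmid q$ split are identical, and the reduction of parts $\mathbf{(B)}$ and $\mathbf{(C)}$ to part $\mathbf{(A)}$ at $\alpha_0=1$ via Abel summation is precisely what the paper does (with your $f_0$ being the paper's $f'$). If anything, you supply more detail on the main-term Euler-product bookkeeping and on the Abel summation computation than the paper itself, which simply asserts these steps.
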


\begin{proof} Let us derive $\mathbf{(A)}$. Consider the arithmetic function $i_f$ defined on each prime as $p\mapsto f(p)p^\alpha-1$. Observe that  
\begin{align}
 \sum_{\substack{\ell\leq X\\ (\ell,q)=1}}\mu^2(\ell)f({\ell})=\sum_{\substack{\ell\leq X\\ (\ell,q)=1}}\frac{\mu^2(\ell)}{\ell^\alpha}\cdot f(\ell)\ell^{\alpha}=\sum_{\substack{\ell\leq X\\ (\ell,q)=1}}\frac{\mu^2(\ell)}{\ell^\alpha}\cdot \prod_{p|\ell}(1+i_f(p))&\nonumber\\
=\sum_{\substack{\ell\leq X\\ (\ell,q)=1}}\frac{\mu^2(\ell)}{\ell^\alpha}\sum_{d|\ell}\mu^2(d)i_f(d)=\sum_{\substack{d\\(d,q)=1}}\frac{\mu^2(d)i_f(d)}{d^{\alpha}}\sum_{\substack{e\leq\frac{X}{d}\\(e,qd)=1}}\frac{\mu^2(e)}{e^\alpha}&\label{followup},
 \end{align} 
where we have not imposed upper bound conditions on the variable $d$. 

In order to continue our estimation, we must be able to estimate the innermost summation in \eqref{followup} regardless of whether or not it has an empty condition, so that their remainder terms converge upon effecting the corresponding outermost summation. As $\alpha>\frac{1}{2}$, this situation can be treated with the help of Lemma \ref{seekfor}; we distinguish two cases.

\noindent $\mathbf{i)}$ $2|q$. Then continuing from \eqref{followup}, along with the ideas of the proof of Theorem \ref{general} and Lemma \ref{seekfor}, it is not difficult to see, as expected, that for all $\alpha>\frac{1}{2}$, the main term of $\sum_{\substack{\ell\leq X\\ (\ell,q)=1}}\mu^2(\ell)f({\ell})$ is $F_\alpha^q(X)$. As for the error term, it corresponds to
\begin{align*}\sum_{\substack{d\\(d,q)=1}}\frac{\mu^2(d)|i_f(d)|}{d^\alpha}O^*\left(\frac{\sqrt{qd}}{\varphi_{\frac{1}{2}}(qd)}\frac{\mathrm{E}_\alpha^{(2)}\ d^{\alpha-\frac{1}{2}}}{X^{\alpha-\frac{1}{2}}}\right)\phantom{xxxxxxxxx}&\\
=O^*\left(\frac{\sqrt{q}}{\varphi_{\frac{1}{2}}(q)}\prod_{p\nmid q}\left(1+\frac{|i_f(p)|}{\sqrt{p}-1}\right)\cdot\frac{\mathrm{E}_\alpha^{(2)}}{X^{\alpha-\frac{1}{2}}}\right),&
\end{align*}
where, for any $\alpha>\frac{1}{2}$, $\frac{\sqrt{q}}{\varphi_{\frac{1}{2}}(q)}\prod_{p\nmid q}\left(1+\frac{|i_f(p)|}{\sqrt{p}-1}\right)$ may be expressed as
\begin{align*}
\frac{\sqrt{q}}{\varphi_{\frac{1}{2}}(q)}\prod_{p|q}\left(1+\frac{|f(p)p^\alpha-1|}{\sqrt{p}-1}\right)^{-1}\cdot\mathrm{P}_\alpha=\mathrm{p}_\alpha(q)\cdot\mathrm{P}_\alpha.
\end{align*}
where $\mathrm{p}_\alpha(q)$ and  $\mathrm{P}_\alpha$ are defined in the statement. Observe that $\mathrm{P}_\alpha$ converges, as $\frac{|i_f(p)|}{\sqrt{p}-1}=\frac{|f(p)p^\alpha-1|}{\sqrt{p}-1}=O\left(\frac{1}{p^{\beta-\alpha+\frac{1}{2}}}\right)$ and $\beta-\alpha+\frac{1}{2}>1$.

\noindent $\mathbf{ii)}$ $2\nmid q$. Then we can write \eqref{followup} as
\begin{align*}
 &\sum_{\substack{d\\(d,2q)=1}}\frac{\mu^2(d)i_f(d)}{d^{\alpha}}\sum_{\substack{e\leq\frac{X}{d}\\(e,qd)=1}}\frac{\mu^2(e)}{e^\alpha}+\frac{i_f(2)}{2^\alpha}\sum_{\substack{d\\(d,2q)=1}}\frac{\mu^2(d)i_f(d)}{d^{\alpha}}\sum_{\substack{e\leq\frac{X}{2d}\\(e,2qd)=1}}\frac{\mu^2(e)}{e^\alpha}\\
&=S_\alpha^q(X)+\frac{i_f(2)}{2^\alpha}T_\alpha^q(X).
\end{align*}
Again, it is not difficult to see that, for any $\alpha>\frac{1}{2}$, the main term of $S_\alpha^q(X)+\frac{i_f(2)}{2^\alpha}T_\alpha^q(X)$ is $F_\alpha^q(X)$, defined in Theorem \ref{general}. On the other hand, the error term of $S_1^q(X)+\frac{i_f(2)}{2}T_1^q(X)$, it can be expressed as
\begin{align*}
 &\sum_{\substack{d\\(d,2q)=1}}\frac{\mu^2(d)|i_f(d|)}{d^{\alpha}}O^*\left(\frac{\sqrt{qd}}{\varphi_{\frac{1}{2}}(qd)}\frac{\mathrm{E}_{\alpha}^{(1)}\ d^{\alpha-\frac{1}{2}}}{X^{\alpha-\frac{1}{2}}}\right)\\
&\phantom{xxxx}+\frac{|i_f(2)|}{2^{\alpha}}\sum_{\substack{d\\(d,2q)=1}}\frac{\mu^2(d)|i_f(d)|}{d^{\alpha}}O^*\left(\frac{\sqrt{2qd}}{\varphi_{\frac{1}{2}}(2qd)}\frac{\mathrm{E}_{\alpha}^{(2)}\ (2d)^{\alpha-\frac{1}{2}}}{X^{\alpha-\frac{1}{2}}}\right)=\\
&O^*\left(\frac{\sqrt{q}}{\varphi_{\frac{1}{2}}(q)}\prod_{p\nmid 2q}\left(1+\frac{|i_f(p)|}{\sqrt{p}-1}\right)\left(\mathrm{E}_\alpha^{(1)}+\frac{|i_f(2)|\ \mathrm{E}_\alpha^{(2)}}{\varphi_{\frac{1}{2}}(2)}\right)\cdot\frac{1}{X^{\alpha-\frac{1}{2}}}\right)=\\
&O^*\left(\mathrm{p}_\alpha(q)\left(\frac{\sqrt{2}-1}{\sqrt{2}-1+|2^\alpha f(2)-1|}\right)\left(\mathrm{E}_\alpha^{(1)}+\frac{|2^\alpha f(2)-1|\ \mathrm{E}_\alpha^{(2)}}{\varphi_{\frac{1}{2}}(2)}\right)\cdot\frac{\mathrm{P}_\alpha}{X^{\alpha-\frac{1}{2}}}\right),
\end{align*}
whence the first case.

Condition $\alpha>\frac{1}{2}$ in the case $\mathbf{(A)}$ is necessary, as we have used Lemma \ref{seekfor}. Nonetheless, we can readily derive an analogous result for the cases $\mathbf{(B)}$  and $\mathbf{(C)}$. Indeed, we can write $f(p)=p^{1-\alpha}f'(p)$, where $A(t)=\sum_{\substack{\ell\leq t\\(\ell,q)=1}}\mu^2(\ell)f'(\ell)$ can be estimated by the case $\mathbf{(A)}$ with $\alpha'=1$, $\beta'=1-\alpha+\beta$. We can then estimate $\sum_{\substack{\ell\leq X\\(\ell,q)=1}}\mu^2(\ell)f(\ell)=\sum_{\substack{\ell\leq X\\(\ell,q)=1}}\mu^2(\ell)f'(\ell)\ell^{1-\alpha}$ by means of a summation by parts, obtaining the result.
\end{proof}

Note that the error term improvement from Theorem \ref{general}, when $\alpha=\frac{1}{2}$ and under conditions of Theorem \ref{general++}, is of logarithmic nature with respect to $O(X^{\frac{1}{2}-\delta})$ for any $\delta\in(0,\frac{1}{2})$.

Concerning the error term in Theorem \ref{general++}, in some particular cases one can do much better in terms of error constants. For instance, it is known, by  \cite[Lemmas 5.1-5.2]{Hel19}
that if $f(p)=1$ and $v\in\{1,2\}$, we have that for any $X>0$ that
\begin{equation}\label{squarefree}  
\sum_{\substack{\ell\leq X\\(\ell,v)=1}}\mu^2(\ell)=\frac{6}{\pi^2}\frac{v}{\kappa(v)}X+O^*(\mathrm{H}_v\sqrt{X}),
\end{equation}
where
\begin{equation}\mathrm{H}_v=
\begin{cases}
\sqrt{3}\left(1-\frac{6}{\pi^2}\right)&\quad\text{ if }v=1,\label{HH2}\\
1-\frac{4}{\pi^2}&\quad\text{ if }v=2,
\end{cases}
\end{equation}
whereas Corollary \ref{general++}  provides only an explicit error term of the form $O^*\left(\frac{\sqrt{q}}{\varphi_{\frac{1}{2}}(q)}\cdot\sage{Upper(cst_crucial*3,digits)}\sqrt{X}\right)$.

\subsection{Consequences}\label{Cop}

\begin{lemma}\label{consequences}
Let $X>0$, then the sum $\sum_{\substack{\ell\leq X\\(\ell,q)=1}}\frac{\mu^2(\ell)}{\varphi(\ell)}$ may be estimated as
\begin{equation}\label{summi} 
\frac{\varphi(q)}{q}\left(\log\left(X\right)+\mathfrak{a}_q\right)+O^*\left(\prod_{p|q}\left(1+\frac{p-2}{p^{\frac{3}{2}}-p-\sqrt{p}+2}\right)\cdot\frac{\sage{Upper(CONSTANT_RAM*Prod_Ram_Upper,digits)}\prod_{2|q}\sage{Upper(CONSTANT2/CONSTANT_RAM,digits)}}{\sqrt{X}}\right),
\end{equation}
where $\mathfrak{a}_q$ is defined in Corollary \ref{corollary}.
\end{lemma}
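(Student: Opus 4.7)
The plan is to apply Theorem \ref{general++}$\mathbf{(A)}$ directly to the multiplicative function $f(p)=\frac{1}{\varphi(p)}=\frac{1}{p-1}$. Since $\frac{1}{p-1}=\frac{1}{p}+\frac{1}{p(p-1)}$, we have $f(p)=\frac{1}{p^{\alpha}}+O\bigl(\frac{1}{p^{\beta}}\bigr)$ with $\alpha=1$ and $\beta=2$; in particular $\beta-\alpha=1>\frac{1}{2}$, so the hypotheses of Theorem \ref{general++}$\mathbf{(A)}$ are fulfilled and the critical exponent $\alpha-\frac{1}{2}=\frac{1}{2}$ can be attained.

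First, I would verify that the main term produced by Theorem \ref{general} coincides with the one claimed. A direct computation gives $H_f^{q}(0)=\prod_{p\nmid q}(1+f(p))(1-1/p)=\prod_{p\nmid q}\frac{p}{p-1}\cdot\frac{p-1}{p}=1$, so the pre-factor is simply $\frac{\varphi(q)}{q}$. For $T_f^{q}$, substituting $f(p)=1/(p-1)$ yields
\begin{equation*}
T_f^{q}=\sum_{p\nmid q}\frac{\log(p)(1-(p-2)/(p-1))}{(p/(p-1))(p-1)}=\sum_{p\nmid q}\frac{\log(p)}{p(p-1)},
\end{equation*}
and combining with $\gamma+\sum_{p|q}\frac{\log(p)}{p-1}$, using $\frac{1}{p-1}=\frac{1}{p(p-1)}+\frac{1}{p}$ on the divisors of $q$, one recovers exactly $\mathfrak{a}_q$ as defined in Corollary~\ref{corollary}.

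Next I would unwind the error constants of Theorem \ref{general++}$\mathbf{(A)}$. Since $f(p)p^{\alpha}-1=\frac{1}{p-1}$, we have $|f(p)p^{\alpha}-1|=\frac{1}{p-1}$, so
\begin{equation*}
\mathrm{p}_1(q)=\prod_{p|q}\left(1+\frac{1-\frac{1}{p-1}}{\sqrt{p}-1+\frac{1}{p-1}}\right)=\prod_{p|q}\left(1+\frac{p-2}{p^{3/2}-p-\sqrt{p}+2}\right),
\end{equation*}
matching the factor in the statement. The constant $\mathrm{P}_1=\prod_{p}\bigl(1+\frac{1}{(p-1)(\sqrt{p}-1)}\bigr)$ converges (its $p$-factor is $1+O(p^{-3/2})$) and I would estimate it rigorously by interval arithmetic, as is done throughout the paper. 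For the prefactor $\mathrm{w}_1^{q}$, the case $2\mid q$ gives directly $\mathrm{w}_1^{q}=\mathrm{E}_1^{(2)}$; when $2\nmid q$, the value $f(2)=1$ gives $|2f(2)-1|=1$, so
\begin{equation*}
\mathrm{w}_1^{q}=\frac{\sqrt{2}-1}{\sqrt{2}}\left(\mathrm{E}_1^{(1)}+\frac{\mathrm{E}_1^{(2)}}{\sqrt{2}-1}\right)=\Bigl(1-\tfrac{1}{\sqrt{2}}\Bigr)\left(\mathrm{E}_1^{(1)}+\frac{\mathrm{E}_1^{(2)}}{\sqrt{2}-1}\right),
\end{equation*}
which the code encodes as \texttt{CONSTANT\_RAM}. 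Packaging these two cases together via the indicator $\prod_{2\mid q}\frac{\mathrm{E}_1^{(2)}}{\mathrm{w}_1^{q}|_{2\nmid q}}$ yields the claimed error constant $\sage{Upper(CONSTANT_RAM*Prod_Ram_Upper,digits)}\prod_{2|q}\sage{Upper(CONSTANT2/CONSTANT_RAM,digits)}$.

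The main obstacle is the rigorous numerical evaluation of $\mathrm{P}_1$: the product converges only like $\sum p^{-3/2}$, which requires an explicit tail bound and sufficient precision on the finite head. This is the same computational issue that appears throughout the earlier lemmas of the paper, and I would handle it exactly as there---truncating at a large threshold, bounding the tail using the values $\mathrm{E}_\alpha^{(v)}$ and Lemma~\ref{seekfor} with an appropriate $\delta\in(0,\frac{1}{2})$, and using Platt's C\texttt{++} interval arithmetic implementation at the paper's precision of $6\cdot 10^9$ to certify the displayed numerical values.
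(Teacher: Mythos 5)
Your proposal is correct and follows essentially the same route as the paper: apply Theorem \ref{general++}$\mathbf{(A)}$ with $f(p)=\frac{1}{p-1}$, $\alpha=1$, $\beta=2$, compute $\mathrm{p}_1(q)$, $\mathrm{P}_1$ and $\mathrm{w}_1^q$ from $|f(p)p^\alpha-1|=\frac{1}{p-1}$, and evaluate $\mathrm{P}_1$ by interval arithmetic; your explicit verification of the main term ($H_f^q(0)=1$ and $T_f^q=\sum_{p\nmid q}\frac{\log p}{p(p-1)}$) is the step the paper leaves as an exercise, and it is carried out correctly. The only small imprecision is your suggestion to bound the tail of $\mathrm{P}_1$ via $\mathrm{E}_\alpha^{(v)}$ and Lemma \ref{seekfor}: the tail of that product is handled by an elementary bound on $\sum_{p>N}p^{-3/2}$ and has nothing to do with those quantities, but this does not affect the argument.
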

\begin{proof} We already know the main term of the asymptotic expression of the above sum, thanks to Corollary \ref{corollary} $\mathbf{(a)}$; obtaining it again from Theorem \ref{general++} is an exercise. On the other hand, by Theorem \ref{general++} with $f(p)=\frac{1}{p-1}$, $\alpha=1$, $\beta=2$, its error term can be expressed as $O^*\left(\mathrm{p}(q)\cdot\frac{\mathrm{w}^q\  \mathrm{P}}{\sqrt{X}}\right)$, where
\begin{align*}
\mathrm{p}(q)&=\prod_{p|q}\left(1+\frac{p-2}{p^{\frac{3}{2}}-p-\sqrt{p}+2}\right),\\
 \mathrm{P}&=\prod_{p}\left(1+\frac{1}{(p-1)(\sqrt{p}-1)}\right)\in[\sage{Trunc(Prod_Ram_Lower,dlong)},\sage{Trunc(Prod_Ram_Upper,dlong)}],\\
\mathrm{w}^q&=\begin{cases}
\sage{Trunc(CONSTANT2,digits)},&\text{ if }2|q,\\
\left(1-\frac{1}{\sqrt{2}}\right)\left(\mathrm{E}_1^{(1)}+\frac{\mathrm{E}_1^{(2)}}{\varphi_{\frac{1}{2}}(2)}\right)=\sage{Trunc(CONSTANT_RAM,digits)}\ldots,&\text{ if }2\nmid q
\end{cases}\leq\sage{Upper(CONSTANT_RAM,digits)}\prod_{2|q}\sage{Upper(CONSTANT2/CONSTANT_RAM,digits)},
\end{align*} 
and where $\mathrm{E}_1^{(v)}$, $v\in\{1,2\},$ is defined in \S\ref{particular}.
\end{proof}

When there is no coprimality conditions, we have obtained an error constant equal to $\sage{Upper(CONSTANT_RAM*Prod_Ram_Upper,digits)}$, that held under condition $X>0$. Ramar\'e and Akhilesh in \cite[Thm. 1.2]{RA13} have given the constant $3.95$ under the condition $X\geq 1$, later improved by Ramar\'e himself in \cite{RA19} to $2.44$ under the condition $X>1$. From these last two bounds, it is not difficult to extend the range of estimation to $X>0$, as we have done for example throughout Lemma \ref{SumEstimations}, and these bounds continue to be better than the value $\sage{Upper(CONSTANT_RAM*Prod_Ram_Upper,digits)}$.

Nonetheless, the above lemma improve considerably \cite[Thm. 1.1]{RA13} when coprimality conditions given by $q\geq 2$ are involved. For example, we have
\begin{align}\label{values}
\sage{Upper(crux(v0)*Prod_Ram_Upper,digits)}\cdot\mathrm{p}(\sage{v0})\leq\sage{Upper(pp(v0)*crux(v0)*Prod_Ram_Upper,digits)}&\leq\sage{Lower(5.9*ramp(v0),digits)}\leq 5.9\cdot j(\sage{v0}),\nonumber\\
\sage{Upper(crux(v1)*Prod_Ram_Upper,digits)}\cdot\mathrm{p}(\sage{v1})\leq\sage{Upper(pp(v1)*crux(v1)*Prod_Ram_Upper,digits)}&\leq\sage{Lower(5.9*ramp(v1),digits)}\leq 5.9\cdot j(\sage{v1}),\nonumber\\
\sage{Upper(crux(v2)*Prod_Ram_Upper,digits)}\cdot\mathrm{p}(\sage{v2})\leq\sage{Upper(pp(v2)*crux(v2)*Prod_Ram_Upper,digits)}&\leq\sage{Lower(5.9*ramp(v2),digits)}\leq 5.9\cdot j(\sage{v2}),\\
\sage{Upper(crux(v1_2)*Prod_Ram_Upper,digits)}\cdot\mathrm{p}(\sage{v1_2})\leq\sage{Upper(pp(v1_2)*crux(v1_2)*Prod_Ram_Upper,digits)}&\leq\sage{Lower(5.9*ramp(v1_2),digits)}\leq 5.9\cdot j(\sage{v1_2}),\nonumber\\
\sage{Upper(crux(v3_2)*Prod_Ram_Upper,digits)}\cdot\mathrm{p}(\sage{v3_2})\leq\sage{Upper(pp(v3_2)*crux(v3_2)*Prod_Ram_Upper,digits)}&\leq\sage{Lower(5.9*ramp(v3_2),digits)}\leq 5.9\cdot j(\sage{v3_2}),\nonumber\\
\sage{Upper(crux(v7_2)*Prod_Ram_Upper,digits)}\cdot\mathrm{p}(\sage{v7_2})\leq\sage{Upper(pp(v7_2)*crux(v7_2)*Prod_Ram_Upper,digits)}&\leq\sage{Lower(5.9*ramp(v7_2),digits)}\leq 5.9\cdot j(\sage{v7_2}),\nonumber
\end{align}
where $j$ is the error term arithmetic function defined in \cite[Thm. 1.1]{RA13} as $2\mapsto\frac{21}{25}$ and $p\geq 3\mapsto 1+\frac{p-2}{p^{\frac{3}{2}}-\sqrt{p}+1}$. Furthermore, the estimation given in Lemma \ref{consequences} is better than the one in \cite[Thm. 1.1]{RA13} for all $q=p$ prime. Indeed, we observe in \eqref{values} that it is better when $p\in\{2,3,5\}$; now, since
\begin{align*}
\frac{p-2}{p^{\frac{3}{2}}-p-\sqrt{p}+2}&<\frac{1}{\sqrt{p}}&&\text{ for all }p\geq 3,\\
\frac{p-2}{p^{\frac{3}{2}}-\sqrt{p}+1}&>\frac{1}{2\sqrt{p}}&&\text{ for all }p\geq 5,
\end{align*}
we have, for all $p\geq 3$, that
\begin{align*}
\sage{Upper(crux(v1)*Prod_Ram_Upper,digits)}\cdot\mathrm{p}(p)\leq\sage{Upper(crux(v1)*Prod_Ram_Upper,digits)}\cdot\left(1+\frac{1}{\sqrt{p}}\right)\leq5.9\cdot\left(1+\frac{1}{2\sqrt{p}}\right)\leq 5.9\cdot j(p),
\end{align*}
whence the conclusion.

As a final remark, observe that that the main contribution to the product $\mathrm{P}$ given in Lemma \ref{consequences} is precisely when $p=2$. This is the reason why, in the present work, we have distinguished if $q$ is either odd or even. Further, as the second main contribution to the product $\mathrm{P}$ is given by its factor at $p=3$ (the subsequent factors when $p>3$ being rather small, as $\frac{1}{\sqrt{p}-1}<1$), the interested reader may study the behavior of the error term bounds given in Theorem \ref{general++}, and therefore the error term in Lemma \ref{consequences}, by distinguishing whether or not $(6,q)=1$: this procedure will require an extension of Lemma \ref{sum2:critic:1} to the cases $(3,q)=1$ and, by using the inclusion-exclusion principle, to the case $(6,q)=1$; afterwards, the analysis will continue exactly as in the current version of Theorem \ref{general++}.

\end{document}